\numberwithin{equation}{section}
\newcommand{\sref}[1]{Section~\ref{#1}}
\newcommand{\udef}{\mathrel{\mathop:}=}
\newcommand{\R}{\mathbb{R}}
\newcommand{\1}{\mathds{1}}
\newcommand{\sgn}{\mathrm{sgn}}
\newcommand{\smat}[1]{ \left[\begin{smallmatrix} #1 \end{smallmatrix}\right]}
\newcommand{\bmat}[1]{ \begin{bmatrix}#1\end{bmatrix}}
\newcommand{\abs}[1]{\lvert#1\rvert}
\newcommand{\conv}{\textrm{conv}\,}
\renewcommand{\Re}{\mathbb{R}}
\newcommand{\gbc}{\varphi}
\newtheorem{thm}{Theorem}[section]
\newtheorem{prop}[thm]{Proposition}
\newtheorem{lem}[thm]{Lemma}
\newtheorem{rem}[thm]{Remark}
\newtheorem{exm}[thm]{Example}
\numberwithin{equation}{section}
\renewcommand{\@biblabel}[1]{#1\hfill \hspace{-0.2cm}}
\begin{document}

\title{Nonnegative moment coordinates on finite element geometries}
\author[Dieci]{Luca Dieci}
\address{School of Mathematics, Georgia Institute of Technology,
Atlanta, GA 30332 U.S.A.}
\email{dieci@math.gatech.edu}
\author[Difonzo]{Fabio V. Difonzo}
\address{Istituto per le Applicazioni del Calcolo \textquotedblleft Mauro Picone\textquotedblright, Consiglio Nazionale delle Ricerche, Via G. Amendola 122/I, 70126 Bari, ITALY}
\email{fabiovito.difonzo@cnr.it}
\author[Sukumar]{N. Sukumar}
\address{Department of Civil and Environmental Engineering, One Shields Avenue, University of California, Davis, CA 95616, USA}
\email{nsukumar@ucdavis.edu}

\subjclass{52A07, 52B55}

\keywords{generalized barycentric coordinates; mean value coordinates; Wachspress coordinates; quadrilateral; hexahedron; Filippov vector field}

\null\hfill Version of \today $, \,\,\,$ \xxivtime

\begin{abstract}
In this paper, we introduce new generalized barycentric coordinates
(coined as {\em moment coordinates}) on convex and nonconvex quadrilaterals and convex hexahedra with planar faces.
This work draws on recent
advances in constructing interpolants to
describe the motion of the Filippov sliding vector field in nonsmooth
dynamical systems, in which nonnegative
solutions of signed matrices based on (partial) distances are studied.
For a finite
element with $n$ vertices (nodes) in $\Re^2$, the
constant and linear reproducing conditions are supplemented
with additional linear moment equations to set up a linear system of equations
of full rank $n$, whose solution results in the nonnegative shape functions.
On a simple (convex or nonconvex)
quadrilateral, moment coordinates using signed distances
are identical to mean value coordinates. For signed weights that are based on
the product of distances to edges that are incident to a vertex and their edge lengths, we recover Wachspress coordinates on a convex quadrilateral.
Moment coordinates
are also constructed on a convex hexahedra
with
planar faces.  We present proofs in support of the construction and plots of the shape functions that
affirm its properties.
\end{abstract}

\keywords{generalized barycentric coordinates; mean value coordinates; Wachspress coordinates; quadrilateral; hexahedron; Filippov vector field}

\maketitle

\pagestyle{myheadings}
\thispagestyle{plain}
\markboth{DIECI, DIFONZO AND SUKUMAR}{NONNEGATIVE MOMENT COORDINATES ON FINITE ELEMENT GEOMETRIES}

\section{Introduction}\label{sec:intro}
Generalized barycentric coordinates (GBCs)~\cite{Floater:2015:GBC,Anisimov:2017:BCA} are used to
interpolate data that are prescribed at the vertices of a polytope. These coordinates (shape functions) extend the well-known barycentric coordinates on simplices to
arbitrary polytopes in $\Re^d$ with more than $d+1$ vertices. GBCs have found applications in computer graphics, computational mechanics, and the applied
sciences~\cite{Hormann:2017:GBC}.
Let $P \subset \Re^d$ be a
polytope (convex or nonconvex) with $n$ vertices $\{ v_i \}_{i=1}^n $.
A point in $P$ is denoted by $p \in \Re^d$. In its most general definition,
generalized barycentric coordinates, $\gbc(p) := \{ \gbc_i \}_{i=1}^n$, must satisfy the following
relations:
\begin{equation}\label{eq:gbc}
\sum_{i=1}^n \gbc_i(p) = 1, \quad
\sum_{i=1}^n \gbc_i(p) v_i = p, \quad \gbc_i(v_j) = \delta_{ij}, \quad \gbc_i \ge 0,
\end{equation}
where $\delta_{ij}$ is the Kronecker-delta and~\eqref{eq:gbc} ensures that
the generalized barycentric interpolant has linear precision. In addition,
it is also desirable that $\gbc$ are smooth in the interior of $P$.

Wachspress~\cite{Wachspress:2016:RBG} was the first to introduce nonnegative rational basis that were valid for convex polygons;
Warren~\cite{Warren:1996:BCC} extended it for convex polyhedra.
Floater~\cite{Floater:2003:MVC} proposed the mean value coordinates, which have a simple expression that is amenable to numerical computations, and is the most popular and widely used set of GBCs. These coordinates are valid for simple (convex and nonconvex) and nested polygons, but the coordinates can become negative on nonconvex polygons~\cite{Hormann:2006:MVC}.  For applications such as shape deformation in computer graphics,
data approximation over meshes that preserve the maximum principle and mitigate the Runge phenomenon, and to ensure positivity
of the mass matrix entries in an explicit Galerkin method for
elastodynamic simulations, it is beneficial that the nonnegativity condition on $\gbc$ is met; however,
over the years, many coordinates have been proposed that can become negative in $P$~\cite{Anisimov:2017:BCA}.
On convex polytopes, coordinates that are nonnegative
are endowed with the facet-reducing property on its
faces~\cite{Arroyo:2006:LME}.
The only coordinates that are nonnegative and can
be $C^1$ smooth in $P$ are harmonic coordinates~\cite{Joshi:2007:HCF},
maximum-entropy coordinates~\cite{Sukumar:2007:OAC,Hormann:2008:MEC}, local barycentric coordinates~\cite{Zhang:2014:LBC,Tao:2019:FNS},
blended barycentric coordinates~\cite{Anisimov:2017:BBC} and iterative coordinates~\cite{Deng:2020:IC}. Harmonic coordinates require the numerical solution of the Laplace equation with piecewise affine boundary conditions, whereas those that follow need numerical computations (not known analytically) and/or use a partitioning of the polytope.

The construction of generalized barycentric coordinates that satisfy~\eqref{eq:gbc} and are given by a simple analytical expression remains an open problem. In this contribution, we propose an initial attempt to fill this void for common finite element geometries. To this end, we supplement the $d+1$ constraints in~\eqref{eq:gbc} by additional {\em moment} constraints so that
the resulting linear system has a unique nonnegative solution, which is the
desired $\gbc$. We refer to the GBCs so constructed as either {\em moment coordinates (MC)} or {\em moment
shape functions}.
Herein, we develop these coordinates for finite element shapes: quadrilaterals in $\Re^2$ and hexahedra in $\Re^3$.
These geometries can be simple quadrilaterals (convex or nonconvex)
and convex hexahedra.
This idea of adding moment equality conditions to~\eqref{eq:gbc} traces back to sliding mode control in piecewise smooth dynamical systems (see, for completeness,~\cite{dd2:2017} for the quadrilateral in $\R^2$
and~\cite{dd4:2017} for its extension to the hexahedron in $\R^3$), when one looks for Filippov solutions in presence of dynamics along discontinuity manifolds of codimension 2 or higher. The connections of the moment approach
to mean value coordinates are discussed in~\cite{dd2:2017}.

The structure of the remainder of this paper follows. In Section~\ref{sec:formulation},
the formulation to derive moment coordinates is presented: quadrilaterals are
treated in Sections~\ref{subsec:quad} and~\ref{subsec:convQuad},
and piecewise affine interpolation in the
one-dimensional setting is considered in~\sref{subsec:1D}. Extension of moment
coordinates to hexahedra is presented is~\sref{subsec:hex}.
Numerical results in Section~\ref{sec:results} include closed-form expressions and
plots of the moment coordinates on finite element geometries.
We summarize our main finding with some
perspectives on future work
in Section~\ref{sec:conclusions}.

\section{Formulation}\label{sec:formulation}
We first describe the approach to obtain analytical (using symbolic computations) expressions of the
moment coordinates on simple quadrilaterals in~\sref{subsec:quad}
(recover mean value coordinates)
and~\sref{subsec:convQuad} (recover Wachspress coordinates). Then we
consider one-dimensional piecewise affine interpolation in~\sref{subsec:1D}
and moment coordinates on convex hexahedra in~\sref{subsec:hex}.
Our main results provide invertibility for matrices with prescribed sign pattern; however, such matrices elude classical tools for proving invertibility
(for example, see~\cite{KleeLadnerManber:1984:SR,BrualdiShader:1995:MSSLS}) and would
require specific analysis.
We also point out that the sign pattern
is identical to the signs of the hourglass nodal vectors (spurious zero-energy modes
of the stiffness matrix for the Poisson equation) for the four-node quadrilateral and
the eight-node hexahedral element~\cite{FlanaganBelytschko:1981:IJNME}.

\subsection{Four-node (convex or nonconvex) quadrilateral}\label{subsec:quad}

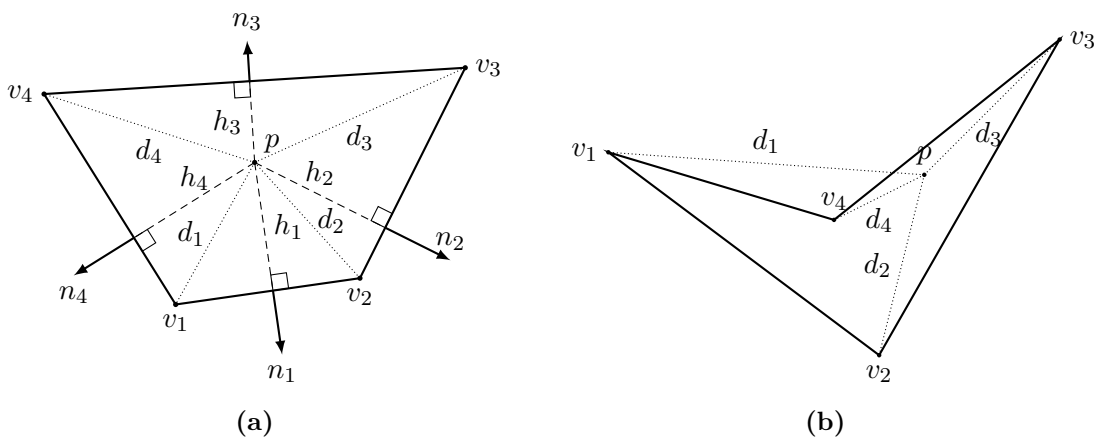
\begin{figure}
\centering
\begin{subfigure}[b]{0.48\textwidth}
\centering
\begin{tikzpicture}[scale=0.7]\usetikzlibrary{calc}
\coordinate [label=below:$v_1$] (A) at (-2.5,-2.5);
\coordinate [label=below:$v_2$]
(B) at (1,-2);
\coordinate [label=right:$v_3$]
(C) at (3,2);
\coordinate [label=left:$v_4$]
(D) at (-5,1.5);
\coordinate [label=above right:$p$]
(P) at (-1,0.2);
\fill (A) circle (1.4pt) (B) circle (1.4pt) (C) circle (1.4pt) (D) circle (1.4pt) (P) circle (1.4pt);
\draw [thick] (A)--(B)--(C)--(D)--cycle;
\draw [densely dotted] (P)--(A) node [pos=0.5, left] {$d_1$};
\draw [densely dotted] (P)--(B) node [pos=0.5, right] {$d_2$};
\draw [densely dotted] (P)--(C) node [pos=0.5, below] {$d_3$};
\draw [densely dotted] (P)--(D) node [pos=0.5, below]
{$d_4$};
\coordinate (P12) at ($(A)!(P)!(B)$);
\draw [densely dashed] (P)--(P12) node [pos=0.5, right] {$h_1$};
\tkzMarkRightAngle[draw=black,size=.3](P,P12,B);

\coordinate (P23) at ($(B)!(P)!(C)$);
\draw [densely dashed] (P)--(P23) node [pos=0.5, above] {$h_2$};
\tkzMarkRightAngle[draw=black,size=.3](P,P23,C);

\coordinate (P34) at ($(C)!(P)!(D)$);
\draw [densely dashed] (P)--(P34) node [pos=0.5, left] {$h_3$};
\tkzMarkRightAngle[draw=black,size=.3](P,P34,D);

\coordinate (P41) at ($(D)!(P)!(A)$);
\draw [densely dashed] (P)--(P41) node [pos=0.5, above] {$h_4$};
\tkzMarkRightAngle[draw=black,size=.3](P,P41,A);

\coordinate (P12H) at ($(P12)!-.5!(P)$);
\draw[-latex,thick] (P12)--(P12H) node[below] {$n_1$};

\coordinate (P23H) at ($(P23)!-.5!(P)$);
\draw[-latex,thick] (P23)--(P23H) node[above] {$n_2$};

\coordinate (P34H) at ($(P34)!-.5!(P)$);
\draw[-latex,thick] (P34)--(P34H) node[above] {$n_3$};

\coordinate (P41H) at ($(P41)!-.5!(P)$);
\draw[-latex,thick] (P41)--(P41H) node[below] {$n_4$};
\end{tikzpicture}
\caption{}
\label{fig:convQuad}
\end{subfigure}
\hfill
\begin{subfigure}[b]{0.48\textwidth}
\centering
\begin{tikzpicture}[scale=0.6]\usetikzlibrary{calc}
\coordinate [label=left:$v_1$] (A) at (-7,-1.5);
\coordinate [label=below:$v_2$]
(B) at (-1,-6);
\coordinate [label=right:$v_3$]
(C) at (3,1);
\coordinate [label=above:$v_4$]
(D) at (-2,-3);
\coordinate [label=above:$p$]
(P) at (0,-2);
\fill (A) circle (1.4pt) (B) circle (1.4pt) (C) circle (1.4pt) (D) circle (1.4pt) (P) circle (1.4pt);
\draw [thick] (A)--(B)--(C)--(D)--cycle;
\draw [densely dotted] (P)--(A) node [pos=0.5, above] {$d_1$};
\draw [densely dotted] (P)--(B) node [pos=0.5, left] {$d_2$};
\draw [densely dotted] (P)--(C) node [pos=0.3, right] {$d_3$};
\draw [densely dotted] (P)--(D) node [pos=0.5, below]
{$d_4$};
\end{tikzpicture}
\caption{}
\label{fig:nonconvQuad}
\end{subfigure}
\caption{Simple quadrilaterals. (a) Convex  and (b) Nonconvex.
$\{v_i\}_{i=1}^4$ are the vertices of the quadrilateral $Q$ and $p$ is a generic point in $Q$. Vertex ordering is
assumed to be cyclic, so that $v_{5}=v_1$ and $v_0=v_4$.}
\label{fig:quad}
\end{figure}

Let $\{v_i\}_{i=1}^{4}\subseteq\R^2$ be affinely independent vertices of a simple quadrilateral (see Figure~\ref{fig:quad}).
The set of all possible barycentric coordinates for any $p\in Q$, $Q\udef\conv\{v_i\}$, is the set of all the nonnegative solutions $\gbc(p)$ to the full-rank, underdetermined system
\begin{equation}\label{eq:BCSystem}
\bmat{\1^\top \\ V}\gbc(p)=\bmat{1 \\ p}.
\end{equation}
As a consequence of the affine independence, such a system has a one-dimensional kernel, so that there exists a nonzero $g\in\R^4$ and $\langle g\rangle=\ker\bmat{\1^\top \\ V}$.
It is straightforward to see that $g$ has a specific sign pattern, which can be computed by
\[
g=\bmat{\tau(v_4) \\ -1},
\]
where $\tau(v_4)$ represents the triangular barycentric coordinate
vector of $v_4$ with respect to the triangle $v_1,v_2,v_3$. Thus, it turns out that
\[
\sgn(g)=\bmat{+ \\ - \\ + \\ -}.
\]
Therefore, if we want to select a specific set of barycentric coordinates for each $p\in Q$, one approach
is to suitably append an additional
condition to~\eqref{eq:BCSystem} of the form
\[
d(p)^\top\gbc(p)=0,\quad d(p)\in\R^4,
\]
so as to make it nonsingular and preserve convexity for its unique solution. Of course, if the vector $d(p)$ is not orthogonal to $g$ then the resulting system would be nonsingular. The most natural way to accomplish this is to select $d(p)$ such that $\sgn(d)=\sgn(g)$. Now, for convexity to be preserved, we would at least guarantee it holds on $\partial Q$. To this end,  let $p\in\overline{v_1v_2}$, so that we expect
\[
\gbc(p)=\bmat{1-\alpha \\ \alpha \\ 0  \\ 0},\quad \alpha\in[0,1],
\]
to be the unique solution to
\begin{equation}\label{eq:momSystem}
\bmat{\1^\top \\ V \\ d(p)^\top}\gbc(p)=\bmat{1 \\ p \\ 0}.
\end{equation}
But since $\alpha=\frac{\|p-v_1\|}{\|v_2-v_1\|}$, we get
\[
\frac{\|p-v_2\|}{\|v_2-v_1\|}d_1+\frac{\|p-v_1\|}{\|v_2-v_1\|}d_2=0,
\]
from which, if $d_1\udef\|p-v_1\|,d_2\udef-\|p-v_2\|$, the above holds true
(see Figures~\ref{fig:convQuad} and \ref{fig:nonconvQuad}).
Repeating the same argument for the remaining edges, we see that defining
\begin{equation}\label{eq:d}
d(p)\udef\bmat{d_1(p) & -d_2(p) & d_3(p) & -d_4(p)},\quad d_i(p)\udef\|p-v_i\|,\,\,i=1,2,3,4,
\end{equation}
provides~\eqref{eq:momSystem} to be nonsingular and its unique solution to be feasible on $\partial Q$. Proving that it is also feasible in the interior of $Q$ can be found in~\cite{dd2:2017}. An alternative proof for the convex quadrilateral case follows.
\begin{prop}\label{prop:momGamma}
For any $p\in Q$ there exist $\alpha,\beta,\gamma\in[0,1]$ such that
\[
p=(1-\gamma)a+\gamma b,\quad
a\udef (1-\alpha)v_4+\alpha v_1,\,\,b\udef (1-\beta)v_2+\beta v_3
\]
and
\begin{equation}\label{eq:momCond}
d(p)^{\top}\gbc(p)=0,\quad\gbc(p)\udef\bmat{(1-\gamma)\alpha & \gamma(1-\beta) & \gamma\beta & (1-\gamma)(1-\alpha)}^{\top},
\end{equation}
where $d(p)$ is as in~\eqref{eq:d}.
\end{prop}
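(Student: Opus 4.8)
The vector $\gbc(p)$ displayed in~\eqref{eq:momCond} is, for \emph{any} $\alpha,\beta,\gamma\in[0,1]$ compatible with the stated decomposition, automatically a nonnegative solution of~\eqref{eq:BCSystem}: its entries are nonnegative, they sum to $(1-\gamma)+\gamma=1$, and $\sum_i\gbc_i(p)v_i=(1-\gamma)a+\gamma b=p$. So the whole point is to find, within this family, one member that also satisfies $d(p)^\top\gbc(p)=0$. I would make the family concrete as follows. For $p$ interior to the convex quadrilateral $Q$, consider the chords of $Q$ through $p$ that meet edge $[v_4,v_1]$ at $a=(1-\alpha)v_4+\alpha v_1$ and edge $[v_2,v_3]$ at $b=(1-\beta)v_2+\beta v_3$; each yields $p=(1-\gamma)a+\gamma b$ with $\gamma\in(0,1)$, since $p$ lies strictly between $a$ and $b$. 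This family is nonempty: if no such chord existed, then for $a\in(v_4,v_1)$ the second intersection $b(a)$ of the line $ap$ with $\partial Q$ would remain, by continuity, in one of the two arcs $[v_1,v_2]$ or $[v_3,v_4]$, and letting $a\to v_1$ (resp.\ $a\to v_4$) would force $p$ onto the line carrying edge $[v_1,v_2]$ (resp.\ $[v_3,v_4]$), contradicting interiority. The family is a continuous one-parameter family — parametrize it by $\alpha$ over a closed subinterval of $[0,1]$, with $\beta$ and $\gamma$ continuous in $\alpha$ — and each of its two extreme chords passes through a vertex of $Q$. Points $p\in\partial Q$ are handled directly by the explicit formulas obtained in the discussion around~\eqref{eq:momSystem}, so I take $p$ interior.

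I would then study $F\udef d(p)^\top\gbc(p)$ as a continuous function along this family. At an extreme the chord passes through some vertex $v_k$, which forces exactly one entry of $\gbc(p)$ to vanish; hence $\gbc(p)$ reduces to the triangular barycentric coordinates of $p$ in one of the triangles $\triangle v_1v_2v_4$, $\triangle v_2v_3v_4$, $\triangle v_1v_2v_3$, $\triangle v_1v_3v_4$. Writing each such coordinate as the ratio of a subtriangle area to the total area and using $[\triangle v_ipv_j]=\tfrac12 d_id_j\sin\angle v_ipv_j$, every surviving product $d_i\gbc_i(p)$ at that extreme equals one fixed positive constant times the sine of an angle at $p$; as the three relevant angles sum to $2\pi$, the value of $F$ at that extreme is $\pm C\bigl(\sin x+\sin y+\sin(x+y)\bigr)$ with $C>0$ and $x,y\in(0,\pi)$, $x+y\in(\pi,2\pi)$.

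The identity $\sin x+\sin y+\sin(x+y)=4\sin\tfrac{x+y}{2}\cos\tfrac x2\cos\tfrac y2$ shows this quantity is $\ge 0$ in those ranges. Determining which vertex each extreme chord passes through — only a small number of positions of $p$ occur, in fact two up to the symmetries of the configuration, according to which diagonal triangle $v_i\,O\,v_{i+1}$ contains $p$, with $O$ the intersection of the diagonals — I would check that $F\le 0$ at one extreme and $F\ge 0$ at the other. By the intermediate value theorem there is then a parameter value at which $F=0$, and the corresponding $(\alpha,\beta,\gamma)\in[0,1]^3$ gives the $\gbc(p)$ asserted by the proposition.

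The routine steps are the trigonometry (a single identity) and the check that $\gbc(p)$ solves~\eqref{eq:BCSystem}. The delicate step — and the one where convexity of $Q$ is genuinely used — is the geometric set-up: nonemptiness of the one-parameter family, continuity of $(\alpha,\beta,\gamma)$ along it, and the identification of the extreme chords with vertex-chords in each position of $p$. I expect this bookkeeping, rather than the sign estimate, to be the main obstacle.
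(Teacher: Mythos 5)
Your argument is correct, but it follows a genuinely different route from the paper's. The paper's own proof is a short reduction: it places $p$ at the origin, normalizes the distances to $d_i=1$ (so that, using $\sum_i\gbc_i=1$, the moment condition~\eqref{eq:momCond} collapses to the single scalar equation $\alpha-\gamma\alpha+\gamma\beta=\tfrac12$), and applies the intermediate value theorem to $f(\alpha,\beta,\gamma)=\alpha-\gamma\alpha+\gamma\beta$ on the cube $[0,1]^3$, using only that $f$ attains the values $0$ and $1$ on the boundary of the cube; the decomposition $p=(1-\gamma)a+\gamma b$ is not carried along explicitly in that computation. You instead keep the true constraint throughout: you restrict to the one-parameter family of chords of $Q$ through $p$ with endpoints on $[v_4,v_1]$ and $[v_2,v_3]$, and run the intermediate value theorem on $F=d(p)^\top\gbc(p)$ along this family, evaluating the sign of $F$ at the two extreme (vertex) chords via triangle barycentric coordinates, the formula $[\triangle v_i p v_j]=\tfrac12 d_i d_j\sin\angle v_i p v_j$, and the identity $\sin x+\sin y+\sin(x+y)=4\sin\tfrac{x+y}{2}\cos\tfrac{x}{2}\cos\tfrac{y}{2}$. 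What your route buys is an argument with the actual, unnormalized distances in which reproduction of $p$ and the moment condition stay coupled at every step — essentially the mean-value-coordinate mechanism made explicit — at the price of the geometric bookkeeping you flag; the paper's route buys brevity by normalizing the $d_i$ away. The bookkeeping is lighter than you fear, and your deferred sign check does go through: along the family $\beta$ and $\gamma$ vary continuously and monotonically with $\alpha$ (this is exactly where convexity enters), so at the extreme where $\alpha$ is largest the active constraint is $\alpha=1$ or $\beta=1$, killing $\gbc_4$ or $\gbc_2$ (entries of $d$ carrying a minus sign), while at the other extreme $\alpha=0$ or $\beta=0$ kills $\gbc_1$ or $\gbc_3$ (plus signs); your sine computation then gives $F\ge 0$ at the first extreme and $F\le 0$ at the second in every configuration, so no case analysis on which diagonal triangle contains $p$ is actually needed. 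Two minor points: at a diagonal chord two entries of $\gbc$ vanish rather than exactly one (which only strengthens the sign), and the boundary case $p\in\partial Q$ is indeed precisely the edge computation preceding~\eqref{eq:momSystem}, as you note.
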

\begin{proof}
Let us assume that $p=0$, and without loss of generality, $d_i=1$ for all $i=1,2,3,4$. Therefore,~\eqref{eq:momCond} reads
\[
\alpha-\gamma\alpha+\gamma\beta=\frac12.
\]
The function $f(\alpha,\beta,\gamma)\udef\alpha-\gamma\alpha+\gamma\beta$ is such that $\nabla f(\alpha,\beta,\gamma)=\bmat{1-\gamma & \gamma & \beta-\alpha}$, so that $\nabla f\neq0$ on $\R^3$. Moreover, on the boundary of $[0,1]^3$ it attains 
the following values:
\begin{align*}
f(0,\beta,\gamma) &= \gamma\beta, \\
f(1,\beta,\gamma) &= 1-\gamma(1-\beta), \\
f(\alpha,0,\gamma) &= \alpha(1-\gamma), \\
f(\alpha,1,\gamma) &= \alpha+\gamma(1-\alpha), \\
f(\alpha,\beta,0) &= \alpha, \\
f(\alpha,\beta,1) &= \beta,
\end{align*}
and so has minimum $0$ and maximum $1$. By the Intermediate Value Theorem there exist $\alpha,\beta,\gamma\in[0,1]$ such that $f(\alpha,\beta,\gamma)=\frac12$, which proves the claim.
\end{proof}

Even though moment coordinates are identical to mean value coordinates on a quadrilateral, solving the linear system~\eqref{eq:momSystem} is computationally attractive to obtain $\gbc(p)$ for any point $p \in Q$. This is so, since the local form of computing the mean value coordinates cannot be used if $p \in \partial Q$~\cite{Floater:2015:GBC}. \\

An alternative expression for moment barycentric coordinates on quadrilaterals can be given in terms of triangular barycentric coordinates, which can be seen as a trivial extension of barycentric coordinates relative to one of the triangles when a quadrilateral is split by its diagonals, with a zero component in the position of the removed vertex. For example, leaving $v_1$ out and considering the triangle $v_2,v_3,v_4$, then the corresponding triangular barycentric coordinates $\tau^1(p)$ are computed for each $p\in Q$ as
\[
\bmat{1 & 1 & 1 & 1 \\ v_1 & v_2 & v_3 & v_4 \\ 1 & 0 & 0 & 0}
\tau^1(p)=\bmat{1 \\ p \\ 0}.
\]
Applying Cramer's rule to~\eqref{eq:momSystem} for computing $\gbc_1(p)$ provides
\[
\gbc_1(p)=\frac{\det\bmat{1 & 1 & 1 & 1 \\ p & v_2 & v_3 & v_4 \\ 0 & -d_2 & d_3 & -d_4}}{d(p)^\top\mathbf{n}},
\]
where $\mathbf{n}\udef\bmat{A_{1} \\ -A_{2} \\ A_{3} \\ -A_{4}}$, $A_{i}\udef\abs{\mathrm{area}(\conv\{v_j\,:\,j\neq i\})}$, and $\langle\mathbf{n}\rangle=\ker\bmat{\1^\top \\ V}$. Letting $\hat\tau^1(p)\udef\bmat{\tau^1_2(p) \\ \tau^1_3(p) \\ \tau^1_4(p)}\in\R^3$ and observing that $\mathbf{n}^1(p)\udef A_1\bmat{-1 \\ \hat\tau^1(p)}$ is such that
\[
\langle\mathbf{n}^1(p)\rangle=\ker\bmat{1 & 1 & 1 & 1 \\ p & v_2 & v_3 & v_4},
\]
we have
\begin{align*}
\gbc_1(p) &= \frac{\bmat{0 & -d_2 & d_3 & -d_4}\mathbf{n}^1(p)}{d(p)^\top\mathbf{n}} = A_{1}\frac{\bmat{0 & -d_2 & d_3 & -d_4}\bmat{-1 \\ \hat\tau^1(p)}}{d(p)^\top\mathbf{n}} \\
&= A_{1}\frac{\bmat{d_1 & -d_2 & d_3 & -d_4}\bmat{0 \\ \hat\tau^1(p)}}{d(p)^\top\mathbf{n}} = A_{1}\frac{d(p)^\top\tau^1(p)}{d(p)^\top\mathbf{n}}.
\end{align*}

Analogous arguments hold for the remaining components. Thus, in general, the following holds:
\[
\gbc_i(p)=(-1)^{i+1}A_{i}\frac{d(p)^\top\tau^i(p)}{d(p)^\top\mathbf{n}}.
\]

\subsection{Convex quadrilaterals}\label{subsec:convQuad}

In this section we prove that on a convex quadrilateral $Q$ (see Figure~\ref{fig:convQuad}), akin to mean value coordinates, it is possible to compute Wachspress coordinates by solving a linear system. Let us recall from~\cite{FloaterGilletteSukumar:2014:SINUM} that Wachspress coordinates can be expressed as
\[
\varphi(p) =\frac{w_i(p)}{\sum_{j=1}^{4}w_j(p)},
\]
where
\begin{equation}\label{eq:wWachspress}
w_i(p)=\frac{A(v_{i-1},v_i,v_{i+1})}{A(p,v_{i-1},v_i)A(p,v_i,v_{i+1})},
\end{equation}
and $A(a,b,c)$ stands for the signed area of the triangle with vertices $a,b,c$ given by
\[
A(a,b,c)\udef\frac12\det\begin{bmatrix} a_1 & b_1 & c_1 \\ a_2 & b_2 & c_2 \\ 1 & 1 & 1 \end{bmatrix}.
\]
As is well known (see~\cite{Floater:2015:GBC}), Wachspress coordinates can also, and more conveniently, be expressed in terms of distances from the edges. More precisely, for $i=1,2,3,4$ and with cyclic convention, letting $n_i$ be the outer normal to the edge $\overline{v_iv_{i+1}}$ and $h_i(p)$ be the distance from $p$ to the edge $\overline{v_iv_{i+1}}$, we define the weights
\[
\tilde w_i(p)\udef\frac{n_{i-1}\times n_i}{h_{i-1}(p)h_i(p)},
\]
where, for general $x,y\in\R^2$, we define
\[
x\times y\udef\begin{vmatrix} x_1 & x_2 \\ y_1 & y_2 \end{vmatrix}.
\]
    It then follows that $A(v_{i-1},v_i,v_{i+1})=\frac12 l_{i,i+1}l_{i,i-1}n_{i-1}\times n_i$, where we let $l_{i,i+1}\udef\|v_i-v_{i+1}\|$, for $i=1,2,3,4$. It can be seen that $w_i(p)=2\tilde w_i(p)$. Now, letting
\[
\rho_i(p)\udef l_{i,i-1}l_{i,i+1}h_{i-1}(p)h_i(p),
\]
and using planar geometry, it is readily shown that
\[
\rho_1(p)\varphi_{1}(p)-\rho_2(p)\varphi_{2}(p)+
\rho_3(p)\varphi_{3}(p)-\rho_4(p)\varphi_{4}(p)=0.
\]
Thus, letting
\[
\rho(p)\udef\bmat{\rho_1(p) & -\rho_2(p) & \rho_3(p) & -\rho_4(p)}^\top ,
\]
it follows that $\rho(p)^\top\mathbf{n}\neq0$ because of the sign pattern, and so Wachspress coordinates are the unique solution to the linear system
\begin{equation}\label{eq:wach}
\bmat{V \\ \1^\top \\ \rho(p)^\top}\varphi(p)=\bmat{p \\ 1 \\ 0}.
\end{equation}
We point out that $p\in Q$, $p\in\overline{v_iv_{i+1}}$ if and only if $h_i(p)=0$. Therefore, $p\in\overline{v_{i-1}v_i}\cup\overline{v_iv_{i+1}}$ if and only if $\rho_i(p)=0$.

\subsection{One-dimensional case}\label{subsec:1D}
Moment conditions can be leveraged to recover piecewise linear basis functions on an interval in one dimension.
Let us start with the case of three points in $[0,1]$, say $x_1=0,x_2\in(0,1),x_3=1$. Letting $x\in[0,1]$, we then have to consider the underdetermined linear system
\begin{equation}\label{eq:1D_3nodes}
\bmat{1 & 1 & 1 \\ x_1 & x_2 & x_3}\gbc=\bmat{1 \\ x}.
\end{equation}
\begin{figure}
\centering
\begin{tikzpicture}
\draw plot [mark=*] coordinates
{(0,0) (5,0) (10,0)};
\draw plot [mark=x] coordinates {(3,0)};
\node at (3,-0.5) {$p$};
\draw plot [mark=*] coordinates {(0,3)};
\draw plot [mark=*] coordinates {(5,-2)};
\draw[dashed] (0,3)--(5,-2);
\node at (0,0.5) {$x_1$};
\node at (5,0.5) {$x_2$};
\node at (10,0.5) {$x_3$};
\node at (3,0.5) {$x$};
\node at (-0.5,3) {$A$};
\node at (5.5,-2) {$B$};
\end{tikzpicture}
\caption{Three nodes in one dimension.}
\label{fig:1D_3nodes}
\end{figure}
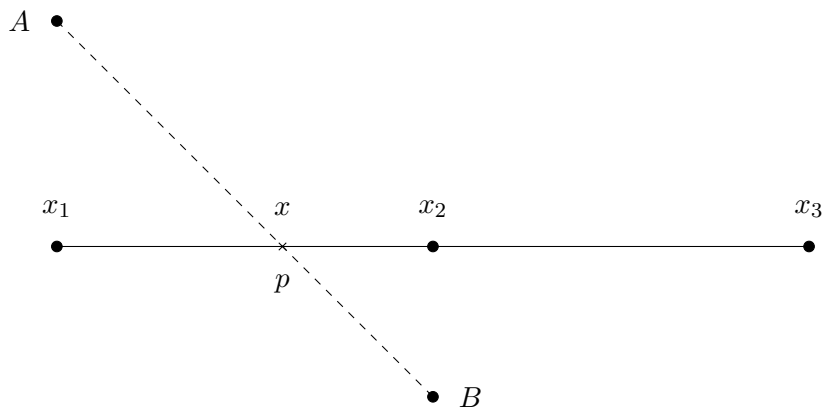
The idea is to regularize the above system so that the point $(x,0)$ belongs to the edge $AB$ (see Figure~\ref{fig:1D_3nodes}), where $A(x_0,x-x_1),B(x_1,x-x_2))$, while ensuring that~\eqref{eq:1D_3nodes} is invertible so that its unique solution is componentwise nonnegative. One way to accomplish this is to append
to~\eqref{eq:1D_3nodes} an additional row using moment regularization. Specifically, we propose to regularize~\eqref{eq:1D_3nodes} as
\[
\bmat{1 & 1 & 1 \\ x_1-x & x_2-x & x_3-x \\ |x_1-x| & -|x_2-x| & |x_3-x|}\gbc=\bmat{1 \\ 0 \\ 0}.
\]
The system above is easily seen to be invertible, and since $\bmat{0\\0}$ is in the convex hull of the points $\bmat{x_1-x\\|x_1-x|},\bmat{x_2-x\\-|x_2-x|},\bmat{x_3-x\\|x_3-x|}$, then its unique solution $\gbc(x)$ is necessarily feasible. \\

In the following we extend such an argument to any number $n$ of distinct points $\{x_i\}_{i=1}^{n}$ in $[0,1]$. We recall a handy result from~\cite{dd4:2017}.
\begin{lem}\label{lemma:Ans}
Let $A\in\R^{n\times m}$, $n<m$, be full rank, and let $b\in\R^{n}$.
Consider the system
\begin{equation}\label{eq:sAns}
Ax=b\ ,
\end{equation}
and let $d\in\R^{m}$ be a nonzero vector.

If there exist $x$ and $y$ solutions to~\eqref{eq:sAns}, such that
\begin{equation*}
d^{\top}x = \xi\ , \quad \text{and}\quad
d^{\top}y = \eta\ ,
\end{equation*}
with $\xi\neq\eta$, then
$\begin{bmatrix} A \\ d^{\top} \end{bmatrix}$ has rank $n+1$.
\end{lem}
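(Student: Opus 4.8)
The plan is to argue by contradiction using the rank–nullity theorem. Suppose, toward a contradiction, that $\begin{bmatrix} A \\ d^{\top}\end{bmatrix}$ does not have rank $n+1$. Since $A$ already has rank $n$, appending one row cannot decrease the rank, so the only possibility is that the augmented matrix still has rank $n$. This means the row $d^{\top}$ lies in the row span of $A$, i.e., there exists $\lambda\in\R^{n}$ with $d^{\top}=\lambda^{\top}A$.

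Next I would test this relation against the two given solutions. If $Ax=b$, then $d^{\top}x=\lambda^{\top}Ax=\lambda^{\top}b$, and likewise $d^{\top}y=\lambda^{\top}Ay=\lambda^{\top}b$. Hence $\xi=d^{\top}x=\lambda^{\top}b=d^{\top}y=\eta$, contradicting the hypothesis $\xi\neq\eta$. Therefore the augmented matrix must have rank $n+1$.

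An equivalent way to package the same idea, which I would mention as the cleaner viewpoint, is through the kernel: since $A$ is full rank with $n<m$, the affine solution set of $Ax=b$ is $x+\ker A$ with $\dim\ker A=m-n\ge 1$, and $y-x\in\ker A$ is nonzero because $d^{\top}(y-x)=\eta-\xi\neq 0$. Thus $d^{\top}$ is a linear functional that does not vanish identically on $\ker A$, so $\ker\begin{bmatrix} A \\ d^{\top}\end{bmatrix}=\ker A\cap\ker d^{\top}$ is a proper subspace of $\ker A$, i.e., has dimension $m-n-1$. By rank–nullity the augmented matrix has rank $m-(m-n-1)=n+1$, as claimed.

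There is really no serious obstacle here — the statement is an elementary linear-algebra fact, and the only "work" is choosing which of the two equivalent formulations (row span versus kernel) to present. I would use the row-span contradiction as the main argument since it is shortest and most transparent, and the calculation $d^{\top}x=\lambda^{\top}b=d^{\top}y$ is immediate once $d^{\top}=\lambda^{\top}A$ is written down.
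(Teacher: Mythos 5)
Your proof is correct: the row-span contradiction (if the augmented matrix had rank $n$, then $d^{\top}=\lambda^{\top}A$ would force $d^{\top}x=\lambda^{\top}b=d^{\top}y$) and the equivalent kernel-dimension argument are both sound and complete. Note that the paper itself gives no proof of this lemma --- it is merely recalled from the cited reference \cite{dd4:2017} --- so there is nothing to compare against; your argument is the standard elementary one that establishes the statement.
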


Let $n\geq4$, $V\udef\bmat{x_1 & \cdots & x_n}$, with $0\leq x_1<x_2<\ldots<x_n\leq1$. Then, given $p\in[0,1]$, we seek for a feasible solution $\gbc(p)$ of the underdetermined system
\begin{equation}\label{eq:1D_n_nodes}
\bmat{1 & \cdots & 1 & \cdots & 1 \\ x_1 & \cdots & x_i & \cdots & x_n \\ |x_1-x| & \cdots & (-1)^{i+1}|x_i-x| & \cdots & (-1)^{n+1}|x_n-x|}\gbc=\bmat{1 \\ x \\ 0},
\end{equation}
which has full rank $3$.

Without loss of generality, let us assume $p\in[x_1,x_2]$; other cases can be handled analogously.
Let us note that the last row above, which may be referred to as \emph{moment regularization row}, is always compatible with the expected solution to~\eqref{eq:1D_n_nodes}, which has the form
\[
\gbc_i(p)=
\begin{cases}
(1-\alpha), & i=1, \\
\alpha, & i=2, \\
0, & i\neq1,2.
\end{cases}
\]
The case $n=4$ is the classical moment system, which is known to be invertible, providing a unique feasible solution. For $n\geq5$, let us define
\begin{align*}
d_i(p) &\udef (-1)^{i+1}|x_i-p|,\quad i=1,\ldots,n, \\
\Delta &\udef \bmat{
0 & 0 & 1 & 1 & \cdots & 0 \\
\vdots & \vdots & & \ddots & \ddots & \vdots \\
0 & 0 & \cdots & \cdots & 1 & 1}\in\R^{(n-3)\times n}.
\end{align*}
We prove the following.
\begin{prop}\label{prop:mom1D}
Let $p\in[0,1]$. For all $n\geq4$ the system
\[
\bmat{\1^\top \\ V-p \\ d(p)^\top \\ \Delta}\gbc(p)=\bmat{1 \\ 0 \\ 0 \\ 0_{n-3}}
\]
is invertible and its unique solution is feasible.
\end{prop}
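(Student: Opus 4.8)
The plan is to split the statement into invertibility of the $n\times n$ coefficient matrix $M$ and feasibility of its (then unique) solution, the latter being the easy half. Working, as the text does, with $x_1<\cdots<x_n$ and $p\in[x_1,x_2]$, set $\alpha\udef(p-x_1)/(x_2-x_1)\in[0,1]$ and $\widehat{\gbc}\udef\bmat{1-\alpha & \alpha & 0 & \cdots & 0}^\top$. First I would verify by direct substitution that $\widehat\gbc$ solves every equation: the $\1^\top$ row since $(1-\alpha)+\alpha=1$; the $V-p$ row since $(x_1-p)(1-\alpha)+(x_2-p)\alpha=0$; the $d(p)^\top$ row since $|x_1-p|(1-\alpha)-|x_2-p|\alpha=(p-x_1)(1-\alpha)-(x_2-p)\alpha=0$; and every row of $\Delta$ since $\widehat\gbc_i=0$ for $i\ge3$. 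As $\widehat\gbc\ge0$, once $M$ is shown invertible this $\widehat\gbc$ is \emph{the} solution and feasibility is automatic.

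For invertibility I would show that $M\gbc=0$ forces $\gbc=0$. The block $\Delta$ is a staircase matrix: its $n-3$ rows say $\gbc_j+\gbc_{j+1}=0$ for $j=3,\dots,n-1$, which telescopes to $\gbc_i=(-1)^{i-3}\gbc_3$ for all $i=3,\dots,n$. Hence every homogeneous solution is determined by $(\gbc_1,\gbc_2,\gbc_3)$, and $M\gbc=0$ collapses to a $3\times3$ homogeneous system built from the first three rows.

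The crux is to add the $V-p$ row and the $d(p)^\top$ row. Viewed as functionals on $\gbc$, their sum is $\sum_{i=1}^{n}\big[(x_i-p)+(-1)^{i+1}|x_i-p|\big]\gbc_i$. Since $x_1\le p\le x_2<x_3<\cdots<x_n$, the $i=1$ and $i=2$ brackets vanish identically ($(x_1-p)+(p-x_1)=0$ and $(x_2-p)-(x_2-p)=0$), while for $i\ge3$ the bracket is $(x_i-p)\big(1+(-1)^{i+1}\big)$, equal to $2(x_i-p)>0$ for odd $i$ and $0$ for even $i$. Feeding in $\gbc_i=(-1)^{i-3}\gbc_3=\gbc_3$ for odd $i\ge3$ yields $\big(\sum_{i\ge3,\ i\ \mathrm{odd}}2(x_i-p)\big)\gbc_3=0$; the sum is strictly positive because it contains $2(x_3-p)$ (and $n\ge4$ ensures index $3$ is present), so $\gbc_3=0$ and therefore $\gbc_i=0$ for every $i\ge3$. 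The two remaining equations, from the $\1^\top$ and $V-p$ rows, are then $\gbc_1+\gbc_2=0$ and $(x_1-p)\gbc_1+(x_2-p)\gbc_2=0$; since $x_1\ne x_2$ these give $\gbc_1=\gbc_2=0$. Thus $\ker M=\{0\}$, $M$ is invertible, and the invertibility part is done.

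I expect the main obstacle to be precisely the sign- and parity-bookkeeping in the previous paragraph --- finding the linear combination of rows on which the $\gbc_1,\gbc_2$ contributions cancel exactly while the tail contributions all share one sign, so that positivity (not merely nonsingularity of some submatrix) pins down $\gbc_3$; this is exactly where the choices $d_i(p)=(-1)^{i+1}|x_i-p|$ and the consecutive-pair pattern of $\Delta$ are used. As an alternative to the direct argument one can build $M$ by adjoining rows one at a time and invoke Lemma~\ref{lemma:Ans} at each step (in the order $\Delta$, then $\1^\top$, then $V-p$, then $d(p)^\top$); only the last step is nontrivial, and it reduces to the same identity, since on the affine set $\{\Delta\gbc=0,\ \1^\top\gbc=1,\ (V-p)^\top\gbc=0\}$ one has $d(p)^\top\gbc=\big(\sum_{i\ge3,\ i\ \mathrm{odd}}2(x_i-p)\big)\gbc_3$, which is nonconstant in the free parameter $\gbc_3$. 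The case $n=4$ (where $\Delta$ is the single row $[\,0\ 0\ 1\ 1\,]$) is covered verbatim and returns the classical moment system, and the reduction ``$p\in[x_1,x_2]$ without loss of generality'' is just a relabeling of nodes together with a shift of the nonzero columns of $\Delta$ to the two indices bracketing $p$.
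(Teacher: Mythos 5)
Your proof is correct, but it takes a genuinely different route from the paper's. The paper proves invertibility by induction on the number of nodes $n$, invoking Lemma~\ref{lemma:Ans} at each step: the first $n-1$ rows (the $(n-1)$-node system with a zero column appended under $x_n$) have full rank by the inductive hypothesis, and two solutions of this underdetermined system are exhibited --- one supported on the bracketing pair $\{x_1,x_2\}$, the other on $\{x_1,x_n\}$ or $\{x_1,x_{n-1}\}$ according to the parity of $n$ so that the signed-distance row is still satisfied --- on which the appended row $\delta^\top=[0\ \cdots\ 0\ 1\ 1]$ takes the distinct values $0$ and $\beta$, whence the rank increases to $n$. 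You instead compute $\ker M$ directly: the staircase $\Delta$ forces the alternating tail $\gbc_i=(-1)^{i-3}\gbc_3$, the sum of the $V-p$ and $d(p)^\top$ rows annihilates the two bracketing columns and reduces to the strictly positive factor $\sum_{i\ge3,\ i\ \mathrm{odd}}2(x_i-p)$ multiplying $\gbc_3$, and a $2\times2$ system finishes. Both arguments then obtain feasibility the same way, from the hat-function solution (recorded in the paper just before the proposition, verified explicitly by you). Your direct argument is arguably more transparent: it isolates exactly where the alternating signs of $d(p)$ and the consecutive-pair structure of $\Delta$ enter, avoids the parity case split of the inductive step, and treats all $n\ge4$ at once; the paper's induction has the merit of reusing Lemma~\ref{lemma:Ans}, the same tool employed in the hexahedral setting, and of showing how nodes may be appended one at a time. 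Your closing ``alternative'' via Lemma~\ref{lemma:Ans} is closer in spirit to the paper but still not the same, since the paper inducts on the number of nodes rather than adjoining the rows of $M$ one by one. The only shared soft spot is the reduction to $p\in[x_1,x_2]$: like the paper, you dismiss the other subintervals with a relabeling remark (which implicitly requires repositioning the zero columns of $\Delta$ at the bracketing indices), so the two treatments are on equal footing there.
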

\begin{proof}
We proceed by induction on $n$. Observe that the base of induction is the case $n=4$. If so,
\[
\Delta=\bmat{0 & 0 & 1 & 1}.
\]
Assume $p=x_1$, let $\alpha\neq0$ such that
\[
p=(1-\alpha)x_2+\alpha x_3.
\]
Therefore $0=\Delta\bmat{1 \\ 0 \\ 0 \\ 0}\neq\Delta\bmat{0 \\ 1-\alpha \\ \alpha \\ 0}=\alpha$, and Lemma~\ref{lemma:Ans} proves the claim. An analogous argument hold if $p=x_2$. If $p\in(x_1,x_2)$, then let $\alpha,\beta\in(0,1)$ such that
\[
p=(1-\alpha)x_1+\alpha x_2=(1-\beta)x_1+\beta x_4.
\]
Again $0=\Delta\bmat{1-\alpha \\ \alpha \\ 0 \\ 0}\neq\Delta\bmat{1-\beta \\ 0 \\ 0 \\ \beta}=\beta$, and Lemma~\ref{lemma:Ans} proves the claim. \\
Let the result hold true for $n-1$, and let us define
\[
\delta^\top\udef\bmat{0 & 0 & \cdots & 0 & 1 & 1}\in\R^n.
\]
Let $p\in(x_1,x_2)$. Then there exist $\alpha,\beta\in(0,1)$ such that
\[
p=(1-\alpha)x_1+\alpha x_2
\]
and
\[
p=
\begin{cases}
(1-\beta)x_1+\beta x_{n-1}, & \textrm{if $n$ is odd}, \\
(1-\beta)x_1+\beta x_n, & \textrm{if $n$ is even}.
\end{cases}
\]
Therefore, assuming $n$ to be even, $\gbc_{12}\udef\bmat{1-\alpha \\ \alpha \\ 0 \\ \vdots \\ 0}, \
\gbc_{1n}\udef\bmat{1-\beta \\ 0 \\ \vdots \\ 0 \\ \beta}$ are solutions to
\[
\bmat{\1^\top & 1 \\ V_{1:n-1}-p & x_n-p \\ d(p)^\top & -|x_n-x| \\ \Delta & 0}\gbc(p)=\bmat{1 \\ 0 \\ 0 \\ 0_{n-3}},
\]
and $\delta^\top\gbc_{12}=0\neq\beta=\delta^\top\gbc_{1n}$, so that Lemma~\ref{lemma:Ans} proves the result. Similar arguments hold for the boundary cases $p=x_1$ or $p=x_2$. We proceed analogously for $n$ odd. The claim is proved.
\end{proof}

\subsection{Flat-faced hexahedron} \label{subsec:hex}

We begin by first stating
the main result from~\cite{dd4:2017}.

\begin{thm}\label{thm:Minv}
Let $v_i=\smat{v_i^1\\ v_i^2\\ v_i^3}$, $i=1,\dots, 8$, be eight vectors in $\R^3$,
and consider the matrix
$V\in \R^{3\times 8}$ given by
\begin{equation}\label{eq:Vmat}
V\udef \bmat{v_1&v_2&v_3&v_4&v_5&v_6&v_7&v_8}\ .
\end{equation}
Let $p\in H$, $H := \conv\{v_i\}$.
Assume that the entries of $V-p$ are nonzero and have the following signs:
\begin{equation}\label{eq:spV}
\begin{bmatrix}
+ & + & + & + & - & - & - & - \\
+ & + & - & - & + & + & - & - \\
+ & - & - & + & + & - & - & +
\end{bmatrix}.
\end{equation}
Let $\Delta(p)\in \R^{3\times 8}$ be the
following  matrix of {\em signed}
partial distances:
\begin{equation}\label{eq:Delta}
\Delta(p)\udef
\begin{bmatrix}
\delta^{23}_{1}(p) & -\delta^{23}_{2}(p) & \delta^{23}_{3}(p) & -\delta^{23}_{4}(p) & \delta^{23}_{5}(p) & -\delta^{23}_{6}(p) &
\delta^{23}_{7}(p) & -\delta^{23}_{8}(p) \\
\delta^{13}_{1}(p) & -\delta^{13}_{2}(p) & -\delta^{13}_{3}(p) & \delta^{13}_{4}(p) & -\delta^{13}_{5}(p) & \delta^{13}_{6}(p) &
\delta^{13}_{7}(p) & -\delta^{13}_{8}(p) \\
\delta^{12}_{1}(p) & \delta^{12}_{2}(p) & -\delta^{12}_{3}(p) & -\delta^{12}_{4}(p) & -\delta^{12}_{5}(p) & -\delta^{12}_{6}(p) &
\delta^{12}_{7}(p) & \delta^{12}_{8}(p)
\end{bmatrix},
\end{equation}
where for each $i=1,\ldots,8$,
\begin{align*}
\delta^{23}_{i}(p) &\udef \sqrt{(v_{i}^{2}-p^2)^{2}+(v_{i}^{3}-p^3)^{2}}\ , \\
\delta^{13}_{i}(p) &\udef \sqrt{(v_{i}^{1}-p^1)^{2}+(v_{i}^{3}-p^3)^{2}}\ , \\
\delta^{12}_{i}(p) &\udef \sqrt{(v_{i}^{1}-p^1)^{2}+(v_{i}^{2}-p^2)^{2}}\ .
\end{align*}
Finally, let
\[
d(p)^{\top}\udef\begin{bmatrix} d_{1}(p) & -d_{2}(p) & d_{3}(p) & -d_{4}(p) & -d_{5}(p) & d_{6}(p) & -d_{7}(p) & d_{8}(p) \end{bmatrix},
\]
where for each $i=1,\ldots,8$,
\[
d_{i}(p)\udef\|v_{i}-p\|_{2},
\]
and let $\1\in \R^8$ be the vector of all $1$'s.

Then, the system
\begin{equation}\label{eq:Mcod3_}
M(p)\gbc(p)=\bmat{1 \\ p \\ 0_4},\quad M(p)\udef\begin{bmatrix} \1^\top \\ V \\ \Delta(p) \\ d(p)^{\top} \end{bmatrix}
\end{equation}
has a unique, componentwise nonnegative solution $\gbc(p)$.
\end{thm}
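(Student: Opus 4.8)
The plan is to separate the statement into two parts — uniqueness (equivalently, invertibility of $M(p)$) and componentwise nonnegativity (feasibility) — and to treat each by raising, by one dimension, the arguments behind Lemma~\ref{lemma:Ans}/Proposition~\ref{prop:mom1D} and behind Proposition~\ref{prop:momGamma}. It helps to recast both conclusions geometrically: writing $m_i(p):=(1,v_i,\Delta_{\cdot i}(p),d_i(p))^{\top}$ for the $i$-th column of $M(p)$ and subtracting $p$ from the $v$-block (legitimate since $\1^{\top}\varphi=1$ forces $\sum_i\varphi_i(v_i-p)=0$), solving $M(p)\varphi=\bmat{1\\p\\0_4}$ with $\varphi\ge 0$ amounts to: the eight points $\widehat{m}_i(p):=(v_i-p,\,\Delta_{\cdot i}(p),\,d_i(p))\in\R^{7}$ are affinely independent (this is invertibility of $M(p)$), and $0\in\conv\{\widehat{m}_i(p)\}_{i=1}^{8}$ (this is nonnegativity); then $\varphi(p)$ is the vector of barycentric coordinates of the origin with respect to the resulting $7$-simplex. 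An orienting observation: the sign pattern of $\widehat{m}_i(p)$, read across $i$, is exactly the character table of $(\Z/2)^{3}$ — equivalently, the sign pattern of the four hourglass modes of the eight-node hexahedron — so $M(p)$ carries the sign pattern of a positive diagonal scaling of an orthogonal (Hadamard) matrix; at the barycentre $p_0$ of the reference cube this holds literally, so $M(p_0)$ is invertible there with $\varphi(p_0)=(\tfrac18,\dots,\tfrac18)$.

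For invertibility in general, build $M(p)$ from $\bmat{\1^{\top}\\V}$ — which has full rank $4$ because the $v_i$ span $\R^{3}$ affinely — and append the four moment rows, namely the three rows of $\Delta(p)$ and then $d(p)^{\top}$, one at a time, invoking Lemma~\ref{lemma:Ans} at each of the four stages. At stage $k$ one exhibits two solutions of the current $(3{+}k)\times 8$ system on which the $k$-th moment functional takes distinct values; these are obtained by representing $p$ through two different convex combinations supported on sub-faces or edges of $H$ chosen according to the logical-cube adjacency, so that the prescribed sign structure \eqref{eq:spV}--\eqref{eq:Delta} forces one of the two values to vanish and the other to have a fixed nonzero sign. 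This is the higher-dimensional analogue of the separations $0\neq\beta$ used in the proof of Proposition~\ref{prop:mom1D}.

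For feasibility, use the decomposition $H=\conv\bigl(\conv\{v_1,v_2,v_3,v_4\}\cup\conv\{v_5,v_6,v_7,v_8\}\bigr)$ into two opposite quadrilateral faces: every $p\in H$ writes as $p=(1-t)a+tb$ with $a\in\conv\{v_1,\dots,v_4\}$, $b\in\conv\{v_5,\dots,v_8\}$, $t\in[0,1]$, and $a,b$ in turn as convex combinations within their faces. This yields a componentwise-nonnegative $\varphi$ satisfying $\1^{\top}\varphi=1$ and $V\varphi=p$ automatically, depending on a parameter $u$ that ranges over a compact convex polytope of dimension four once the three scalar equations $p=(1-t)a+tb$ are imposed. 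The four remaining conditions, $\Delta(p)\varphi=0_3$ and $d(p)^{\top}\varphi=0$, define a continuous map $F$ from this polytope into $\R^{4}$, and one shows $F$ has a zero by a topological degree / Poincar\'{e}--Miranda argument, using \eqref{eq:spV} to control the signs of the components of $F$ on the boundary faces of the parameter polytope — exactly as the single moment function in Proposition~\ref{prop:momGamma} was shown to attain $0$ and $1$ on $\partial[0,1]^{3}$. On those boundary faces $\varphi$ loses support on a vertex and the moment problem drops one dimension, so the boundary analysis parallels, one step lower, the quadrilateral case already settled in Proposition~\ref{prop:momGamma}, and the degree computation becomes inductive. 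The $\varphi$ so produced is nonnegative and solves the full system; by invertibility of $M(p)$ it is the unique solution.

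The hard part is precisely the sign bookkeeping the paper flags as eluding classical tools. In the invertibility step one must check, at each of the four stages, that the two exhibited sub-face solutions really do separate the current moment functional: since those solutions involve not only combinatorics but also the magnitudes of the partial distances $\delta^{23}_i,\delta^{13}_i,\delta^{12}_i,d_i$, the check cannot be reduced to sign-nonsingularity and needs the specific geometry of \eqref{eq:Delta}. In the feasibility step the real work is the four-dimensional boundary analysis of the degree argument — more delicate than for the quadrilateral, because an interior point $p$ of $H$ is not pinned to a single bounding edge or face, so there is no ``solution supported on the face containing $p$'' to anchor the computation, and the faces of the parameter polytope must be organised into an induction. (Alternatively one could try to obtain invertibility by a homotopy from the reference cube, where $M$ is a positive diagonal scaling of the character matrix; but excluding a vanishing determinant along the deformation reduces, once again, to the same sign analysis.)
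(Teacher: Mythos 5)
A point of context first: the paper does not prove Theorem~\ref{thm:Minv} at all --- it is stated as the main result imported from~\cite{dd4:2017} --- so there is no in-paper argument to compare yours against; a self-contained proof would have to reconstruct that reference's analysis of these specific sign matrices.

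As it stands, your proposal has a genuine gap: it is a roadmap in which the two steps that constitute the actual content of the theorem are deferred rather than carried out, as you yourself concede. (i) For invertibility, applying Lemma~\ref{lemma:Ans} four times requires, at stage $k$, two solutions of the system that already contains the previously appended moment rows (with zero right-hand side) and on which the new functional takes distinct values. A convex combination representing $p$ supported on an edge or sub-face of $H$ does not in general annihilate the earlier $\Delta$-rows: their entries on that support are nonzero signed partial distances, and whether the signed sum cancels is a quantitative question about the magnitudes $\delta^{23}_i,\delta^{13}_i,\delta^{12}_i$, not a sign-pattern question. You acknowledge exactly this (``the check cannot be reduced to sign-nonsingularity'') but never supply it, so none of the four separations needed to invoke Lemma~\ref{lemma:Ans} is actually established. (ii) For feasibility, the Poincar\'{e}--Miranda/degree argument needs the feasible set $\{\varphi\ge 0:\ \1^\top\varphi=1,\ V\varphi=p\}$ (or your $(a,b,t)$-parameter set) to be organised as a four-dimensional cube-like domain with each component of $F=(\Delta(p)\varphi,\,d(p)^{\top}\varphi)$ changing sign across a matched pair of opposite boundary faces; this polytope is not combinatorially a cube in general, it degenerates as $p$ approaches $\partial H$, and the required boundary sign inequalities --- which is where the hypothesis \eqref{eq:spV} must enter --- are never verified; you flag them as ``the real work.'' What survives once these two items are removed is the correct but uncontroversial reformulation (invertibility $=$ affine independence of the lifted points $\widehat m_i(p)\in\R^7$, nonnegativity $=$ $0\in\conv\{\widehat m_i(p)\}$), plus the observation that the claim holds at the centre of a cube, where $M$ is a row scaling of the $(\Z/2)^3$ character matrix. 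That is a plausible plan of attack, and the intended induction/degree structure may well be workable, but in its present form it does not prove either the nonsingularity of $M(p)$ or the nonnegativity of $\gbc(p)$.
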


However, it can be seen that if $p\in\partial H$ then \eqref{eq:Mcod3_}, while still nonsingular, can provide a unique unfeasible solution $\gbc(p)$. This issue can be overcome by suitably placing zeros on all the columns of $\Delta(p)$ in \eqref{eq:Delta} relative to the indices of the face that $p$ belongs to. In order to do so, letting $\mathcal{F}$ be the planar face with vertices $v_i,v_j,v_k,v_h$, if it holds that
\[
(p-v_i)\times(v_j-v_i)=(v_k-v_i)\times(v_j-v_i),
\]
then $p\in\mathcal{F}$ and thus we set
\[
\Delta(:,[i,j, k, h])=0_{3\times 4}.
\]
It is a consequence from the 2D case that the corresponding $M(p)$ is invertible and that the unique solution $\gbc(p)$ to~\eqref{eq:Mcod3_} is componentwise nonnegative, such that $\gbc_{r}(p)=0$, for $r=i,j,k,h$.

It is fundamental to point out
that~\eqref{eq:spV} is a crucial assumption to obtain feasibility of the unique solution to~\eqref{eq:Mcod3_}. A simple argument is needed to show that~\eqref{eq:spV} is always satisfied, for each $p\in H$, if $H$ is a regular hexahedron (for example, if $H$ is a cube). However, when $H$ is a general convex hexahedron, one can easily see that~\eqref{eq:spV} may fail. In this case, a simple change of coordinates can fix this issue. More precisely, if $H$ is convex, one can always find three linearly independent normals, providing three planes, such that each vertex in $V$ lies in a different orthant, as shown below.
\begin{lem}\label{lem:changeOfCoordinates}
Let $H$ be a convex hexahedron. Then, for each $p\in H$ there exists a unit reference system $R(p)\udef\{r_1(p),r_2(p),r_3(p)\}$ such that the sign pattern of $V-p$, in $R(p)$ is given by~\eqref{eq:spV}.
\end{lem}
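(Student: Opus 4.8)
The plan is to recast the statement as follows: it suffices to produce three linearly independent unit vectors $m_1,m_2,m_3$ (depending on $p$) such that, for each $k$, the hyperplane through $p$ with normal $m_k$ strictly separates the $k$-th pair of opposite faces of $H$, with the vertices carrying a $+$ in row $k$ of \eqref{eq:spV} on the positive side. Given these, take $R(p)$ to be the basis dual to $(m_1,m_2,m_3)$, rescaled to unit length; the coordinate of $v_i-p$ along $r_k(p)$ is then a positive multiple of $\langle v_i-p,m_k\rangle$, whose sign is $+$ exactly when $v_i$ lies on the positive side of the $k$-th separating plane. Since, for the standard eight-node connectivity, the incidence pattern ``which vertex is on which side of which pair of opposite faces'' is precisely the pattern of \eqref{eq:spV}, the sign matrix of $V-p$ in $R(p)$ equals \eqref{eq:spV}. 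I take $p\in\mathrm{int}\,H$; if $p$ lies in the relative interior of a proper face, that pair cannot be separated strictly, but then the columns of $\Delta(p)$ attached to that face are zeroed as explained above and \eqref{eq:spV} is only demanded on the remaining indices, to which the argument below applies verbatim.

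The heart of the matter is the separation claim for a single pair of opposite faces $F=\conv\{v_i:i\in G^+\}$, $F'=\conv\{v_i:i\in G^-\}$, with $G^\pm$ the $+$/$-$ indices of the corresponding row of \eqref{eq:spV}. Since $H$ is a convex polytope, $F$ and $F'$ share no vertex, hence are disjoint (a nonempty face of a bounded polytope has a vertex), and since $p$ is interior, $p\notin F\cup F'$. Testing the inequalities only at vertices and applying Gordan's theorem of the alternative, a unit $m$ with $\langle p-f,m\rangle>0$ for all $f\in F$ and $\langle f'-p,m\rangle>0$ for all $f'\in F'$ exists if and only if $0\notin\conv\big((p-F)\cup(F'-p)\big)$. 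I would prove this by contradiction: a representation $0=\lambda(p-f)+(1-\lambda)(f'-p)$ with $f\in F$, $f'\in F'$, $\lambda\in[0,1]$, forces $f=f'$ when $\lambda=\tfrac12$, impossible because $F\cap F'=\emptyset$; for $\lambda\neq\tfrac12$, solving for $p$ places $p$ on the line through $f$ and $f'$ and strictly outside the segment $[f,f']$ (an endpoint would give $p\in F\cup F'$, excluded), whence the endpoint of $[f,f']$ lying between $p$ and the other endpoint is a proper convex combination of $p\in\mathrm{int}\,H$ and a point of $H$, hence belongs to $\mathrm{int}\,H$ by the line-segment principle for convex bodies --- contradicting its membership in $\partial H$. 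So the separating $m$ exists; replace $m$ by $-m$ if needed so that it is positive on $G^+$.

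It remains to arrange linear independence. For the $k$-th pair the admissible set $U_k:=\{u\in\R^3:\langle v_i-p,u\rangle>0\ (i\in G_k^+),\ \langle v_i-p,u\rangle<0\ (i\in G_k^-)\}$ is an open convex cone, nonempty by the previous step. Pick $m_1\in U_1$; since $U_2$ is open and nonempty it is not contained in the line $\R m_1$, so pick $m_2\in U_2\setminus\R m_1$; since $U_3$ is open and nonempty it is not contained in $\mathrm{span}\{m_1,m_2\}$, so pick $m_3\in U_3$ outside that plane. Then $(m_1,m_2,m_3)$ is a basis and the reduction in the first paragraph yields the required reference system $R(p)$.

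The step I expect to be the main obstacle is the separation claim of the second paragraph: naive recipes --- taking $m_k$ to be the outward unit normal of one of the two faces, or a fixed linear combination of the two outward normals --- fail for sufficiently skewed convex hexahedra, so the correct move is to phrase the requirement as a through-$p$ separation of opposite faces and then exploit the convexity of $H$ structurally (the line-segment principle, together with the fact that the two faces are disjoint subsets of $\partial H$) rather than to seek an explicit formula for $R(p)$. The surrounding linear-algebra reduction and dimension count are routine.
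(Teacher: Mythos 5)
Your argument is correct where it matters ($p\in\mathrm{int}\,H$) and it takes a genuinely different route from the paper. The paper constructs the vectors $r_k$ directly: for each couple of opposite faces it picks a point $A$ on the (suitably clipped) intersection line of their two supporting planes and sets $r_k$ to be the unit vector of $p-A$ (any direction parallel to the faces when the couple is parallel), and then simply asserts that the resulting triple is a basis in which $V-p$ has the pattern~\eqref{eq:spV}. You instead work with the dual data: for each pair of opposite faces a normal $m_k$ of a plane through $p$ strictly separating them, whose existence you prove via the theorem of the alternative ($0\notin\conv\bigl((p-F)\cup(F'-p)\bigr)$, using disjointness of opposite faces and the line-segment principle at the interior point $p$); linear independence comes from openness of the admissible cones $U_k$, and the normalized dual basis converts ``signs of coordinates in a basis'' into ``sides of planes through $p$'' --- a translation the paper leaves implicit. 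Your route buys a complete verification of the sign-pattern claim (which the paper only asserts; note that its latitude ``any vector parallel to $F_1$'' in the parallel case does not by itself pin the pattern down), while the paper's buys an explicit geometric recipe for $R(p)$, whereas your $m_k$ are purely existential.

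One caveat on the boundary: your reading of the statement for $p$ on a face is the sensible one (the full pattern~\eqref{eq:spV} is unattainable there, consistent with zeroing the corresponding columns of $\Delta(p)$), but the claim that the interior argument ``applies verbatim'' to the remaining two pairs is not accurate, since the line-segment principle step genuinely uses $p\in\mathrm{int}\,H$. The repair is short: if some $g_1\in G_1$ were a proper convex combination of $p\in H$ and $g_2\in H$, then, $G_1$ being a face (hence an extreme subset) of $H$, both $p$ and $g_2$ would lie in $G_1$, contradicting that $p$ sits in the relative interior of a different face; points $p$ on edges or at vertices of $H$ would need a similar, likewise omitted, discussion. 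Since the paper's own proof does not treat boundary points at all, this is a loose end you share with it rather than a flaw of your approach, but the ``verbatim'' phrasing should be replaced by the face/extremality argument.
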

\begin{proof}
Let $F_i, G_i, H_i, i=1,2$ be the three couples of opposite faces of $H$. If $F_1, F_2$ are parallel, then let $r_1$ be any vector parallel to $F_1$. Otherwise, if $F_1, F_2$ are not parallel, let $l\udef S_{F_1}\cap S_{F_2}$ be the portion of line at the intersection of the supporting planes of $F_1, F_2$ and within the polyhedron defined by the supporting planes of all the other faces. Let $A\in l$ be arbitrarily chosen. Then, let $r_1$ be the unit vector relative to $p-A$. The remaining vectors $r_2, r_3$ are constructed analogously as above, according to whether $G_i, H_i, i=1,2$ are parallel or not. The set $\{r_1,r_2,r_3\}$ defined this way is then a basis for $\R^3$ such that $\sgn(V-p)$ is as in~\eqref{eq:spV}.
\end{proof}

In addition, we observe that if $H$ is a nonconvex hexahedron, the conclusion of Lemma \ref{lem:changeOfCoordinates} may fail depending on the point $p\in H$.

\section{Numerical results}\label{sec:results}

We present analytical expressions of moment coordinates over
convex and nonconvex quadrilaterals, and present plots of these coordinates over
quadrilaterals and hexahedra.

\subsection{Quadrilateral}\label{subsec:results_quad}
On solving~\eqref{eq:momSystem} over the biunit square ($Q = \square$) that is centered at $[1/2,1/2]^\top$,
we obtain the analytical solution for the moment coordinates (identical to mean value coordinates):
\begin{equation*}
\resizebox{\textwidth}{!}{$
\gbc =
    \left[\begin{array}{c} -\frac{x \sqrt{x^2-2x+y^2-2y+2}-\sqrt{x^2+2x+y^2-2y+2}-\sqrt{x^2-2x+y^2+2y+2}+x\sqrt{x^2+2x+y^2-2y+2}+y\sqrt{x^2-2x+y^2-2y+2}+y\sqrt{x^2-2x+y^2+2y+2}}
    {2\left(\sqrt{x^2-2x+y^2-2y+2}+\sqrt{x^2-2x+y^2+2y+2}+\sqrt{x^2+2x+y^2-2y+2}+\sqrt{x^2+2x+y^2+2y+2}\right)} \\
    \frac{\sqrt{x^2-2x+y^2-2y+2}+\sqrt{x^2+2x+y^2+2y+2}+x\sqrt{x^2-2x+y^2-2y+2}+x\sqrt{x^2+2x+y^2-2y+2}-y\sqrt{x^2+2x+y^2-2y+2}-y\sqrt{x^2+2x+y^2+2y+2}}
    {2\left(\sqrt{x^2-2x+y^2-2y+2}+\sqrt{x^2-2x+y^2+2y+2}+\sqrt{x^2+2x+y^2-2y+2}+\sqrt{x^2+2x+y^2+2y+2}\right)} \\
    \frac{\sqrt{x^2-2\,x+y^2+2y+2}+\sqrt{x^2+2x+y^2-2\,y+2}+x\sqrt{x^2-2x+y^2+2y+2}+x\sqrt{x^2+2x+y^2+2y+2}+y\sqrt{x^2+2x+y^2-2y+2}+y\sqrt{x^2+2x+y^2+2y+2}}
    {2\left(\sqrt{x^2-2x+y^2-2y+2}+\sqrt{x^2-2x+y^2+2y+2}+\sqrt{x^2+2x+y^2-2y+2}+\sqrt{x^2+2x+y^2+2y+2}\right)} \\
    \frac{\sqrt{x^2-2x+y^2-2y+2}+\sqrt{x^2+2x+y^2+2y+2}-x\sqrt{x^2-2x+y^2+2y+2}-x\sqrt{x^2+2x+y^2+2y+2}+y\sqrt{x^2-2x+y^2-2y+2}+y\sqrt{x^2-2x+y^2+2y+2}}
    {2\left(\sqrt{x^2-2x+y^2-2y+2}+\sqrt{x^2-2x+y^2+2y+2}+\sqrt{x^2+2x+y^2-2y+2}+\sqrt{x^2+2x+y^2+2y+2}\right)}
    \end{array}\right].
$
}
\end{equation*}
As is well known, mean value coordinates are distinct from bilinear finite element shape functions on the biunit square.  However, Wachspress coordinates are identical to bilinear finite element
shape functions which is realized on solving~\eqref{eq:wach}.
We now present these coordinates on two simple (convex and nonconvex) quadrilaterals.

\begin{exm}\label{ex:conv}
Let us consider the convex quadrilateral $Q$ with vertices
\[
v_1=\bmat{0 \\ 0}, \ v_2=\bmat{1 \\ 0}, \ v_3=\bmat{\frac12 \\ 4}, \ v_4=\bmat{0 \\ 2}.
\]
\end{exm}
Figure~\ref{fig:exConv_mc_wc} shows both mean value and Wachspress coordinates for each vertex, while their partial derivatives are shown in Figure~\ref{fig:exConv_mc_der} and Figure~\ref{fig:exConv_wc_der}, respectively. Both coordinates are nonnegative, piecewise affine on the boundary and satisfy the Kronecker-delta property at the vertices. Furthermore, we
obtain the analytical solution for Wachspress coordinates:
\[
\gbc =
\left[\begin{array}{c}
\frac{-32\,x^2+4\,x\,y+16\,x+y^2-10\,y+16}{2\,\left(16\,x-y+8\right)}\\ \frac{4\,x\,\left(4\,x-y+2\right)}{16\,x-y+8}\\ \frac{6\,x\,y}{16\,x-y+8}\\ \frac{y\,\left(8-8\,x-y\right)}{2(16\,x-y+8)}
\end{array}\right]
,
\]
which are rational functions, with numerator of degree $2$ and denominator of degree $1$~\cite{Wachspress:2016:RBG}.
\begin{figure}
    \centering
\begin{subfigure}[b]{\textwidth}
    \begin{subfigure}[b]{0.25\textwidth}
    \includegraphics[width=\textwidth]{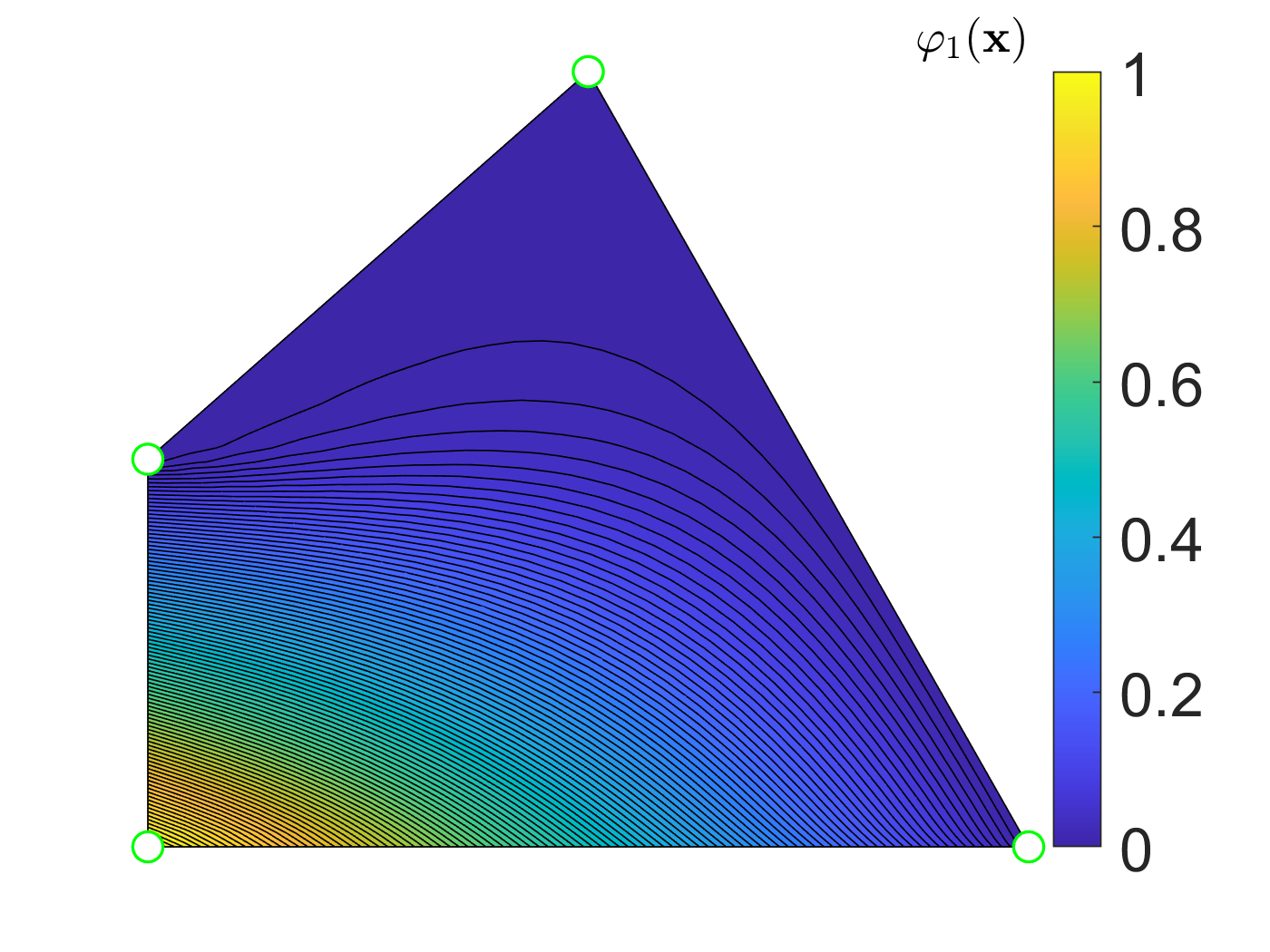}
    \end{subfigure}%
    \hfill
    \begin{subfigure}[b]{0.25\textwidth}
    \includegraphics[width=\textwidth]{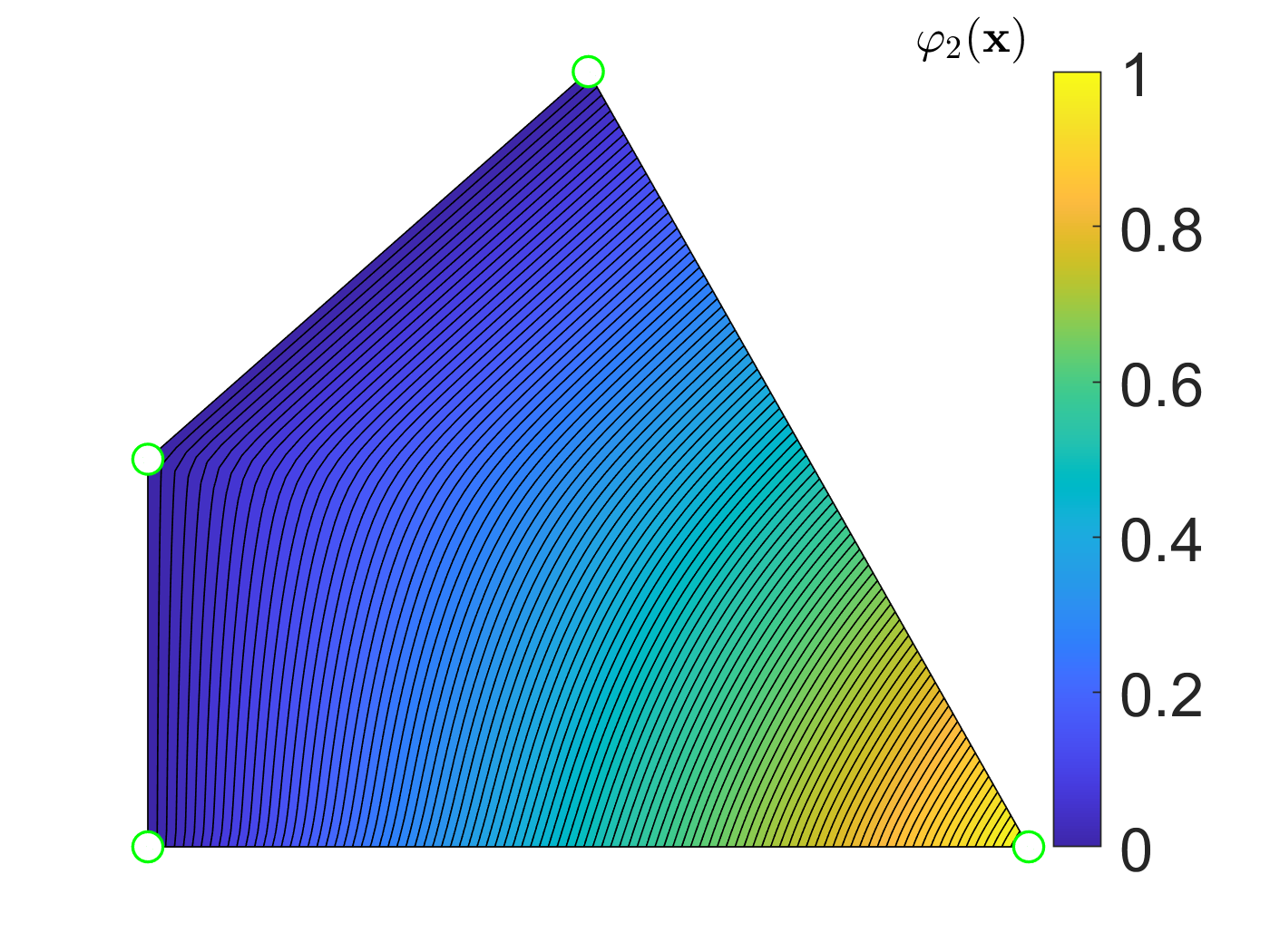}
    \end{subfigure}%
    \hfill
    \begin{subfigure}[b]{0.25\textwidth}
    \includegraphics[width=\textwidth]{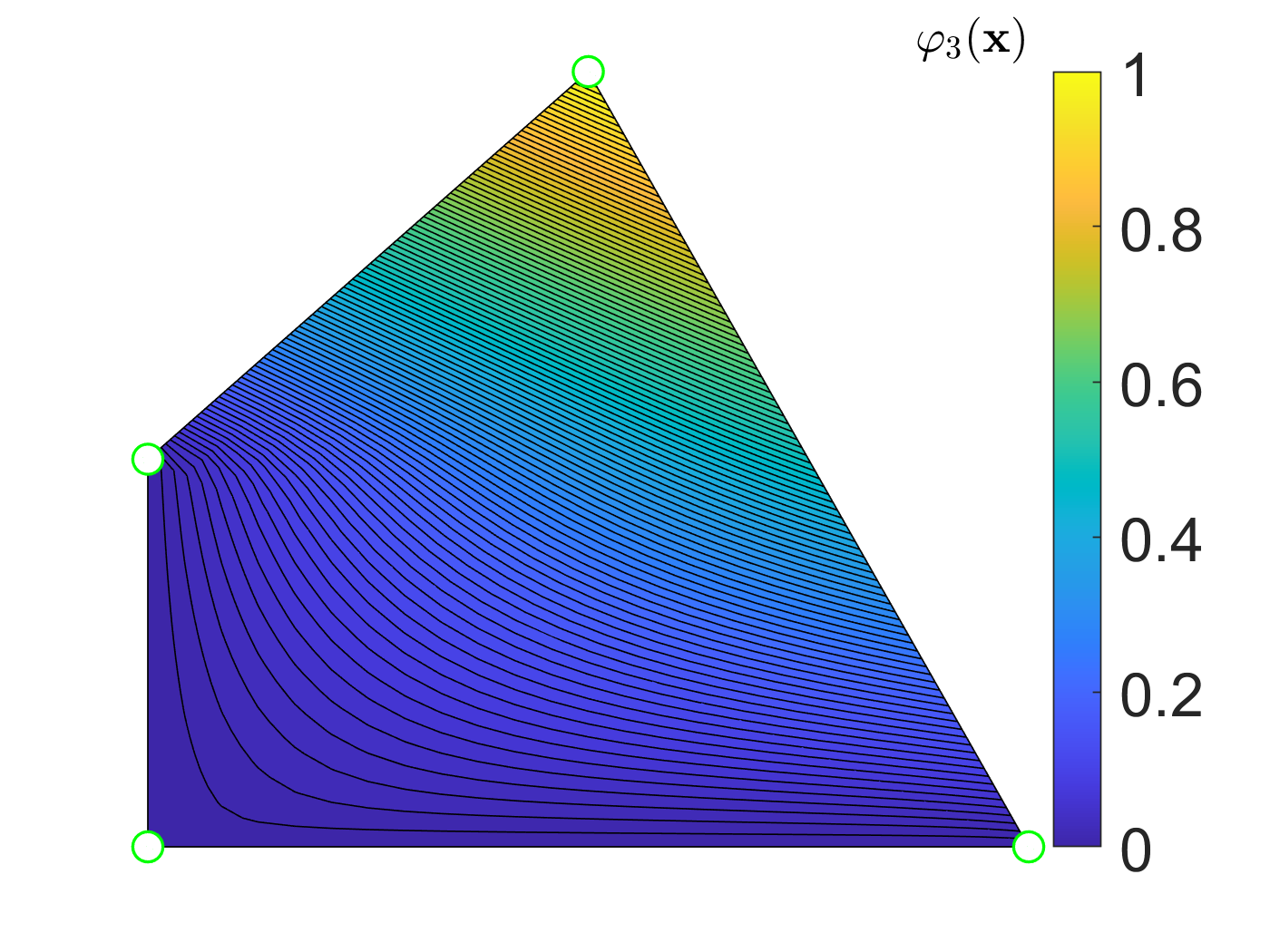}
    \end{subfigure}%
    \hfill
    \begin{subfigure}[b]{0.25\textwidth}
    \includegraphics[width=\textwidth]{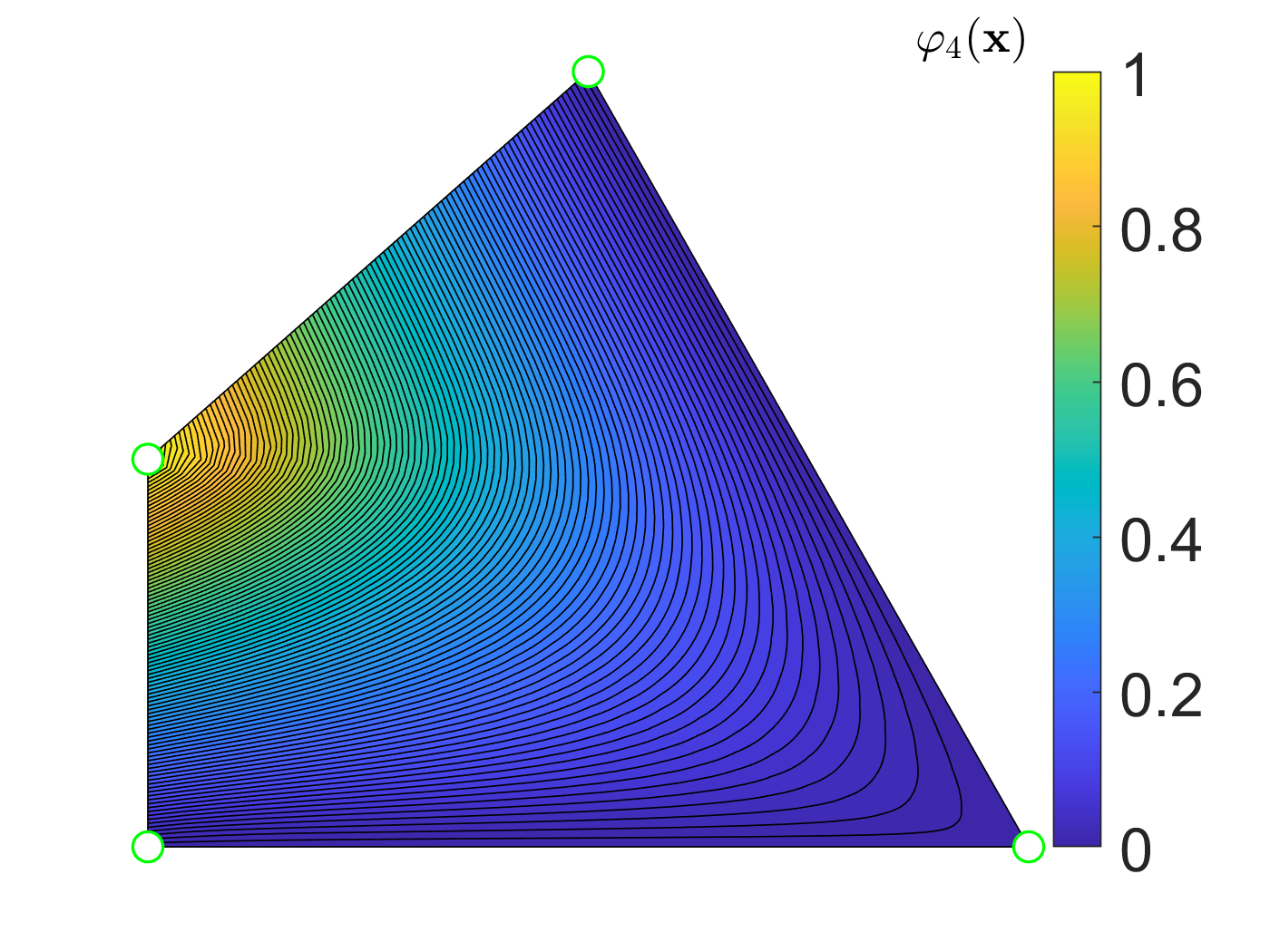}
    \end{subfigure}
    \caption{}
    \label{fig:exConv_mc}
\end{subfigure}
\begin{subfigure}[b]{\textwidth}
    \begin{subfigure}[b]{0.25\textwidth}
    \includegraphics[width=\textwidth]{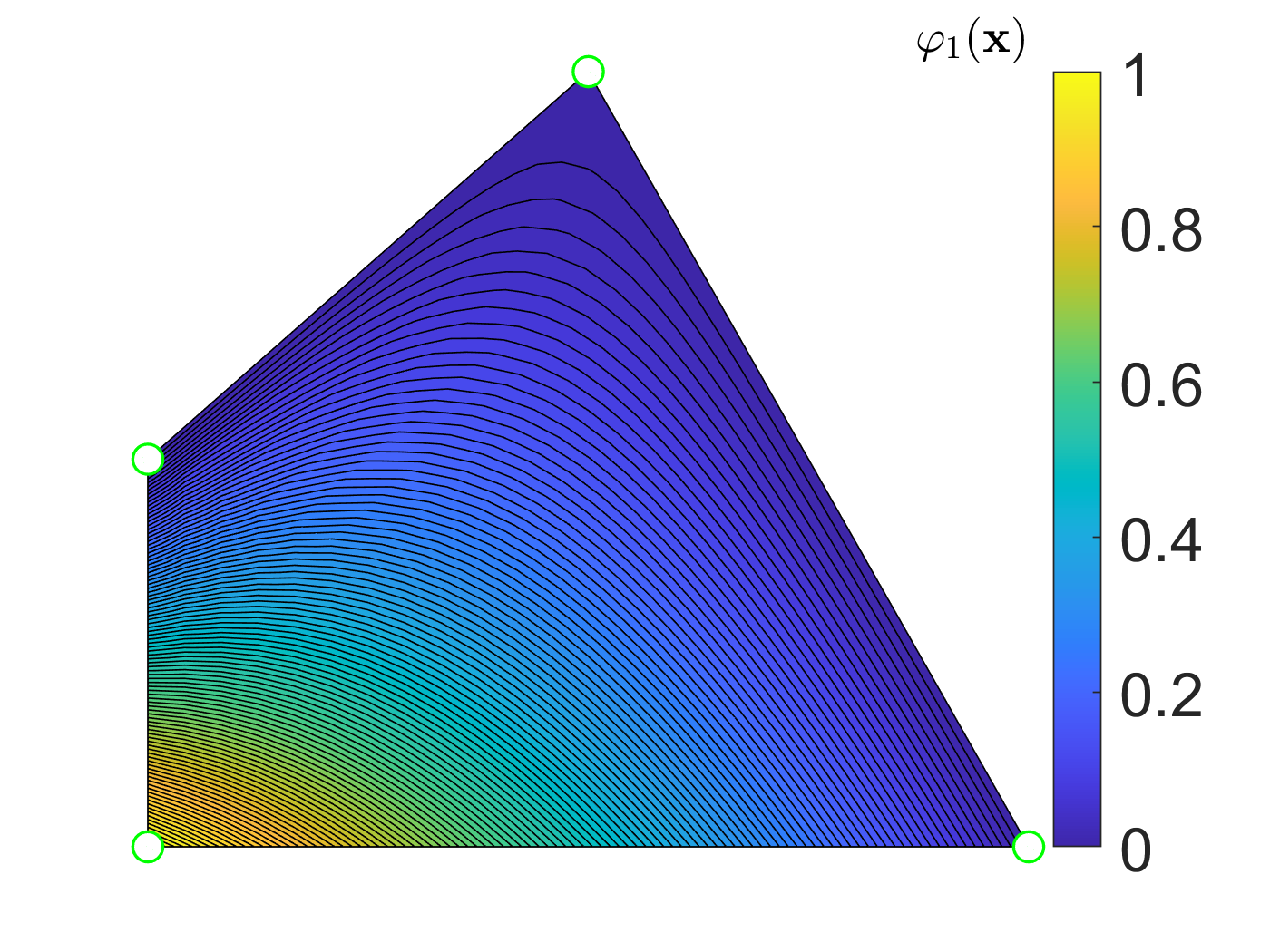}
    \end{subfigure}%
    \hfill
    \begin{subfigure}[b]{0.25\textwidth}
    \includegraphics[width=\textwidth]{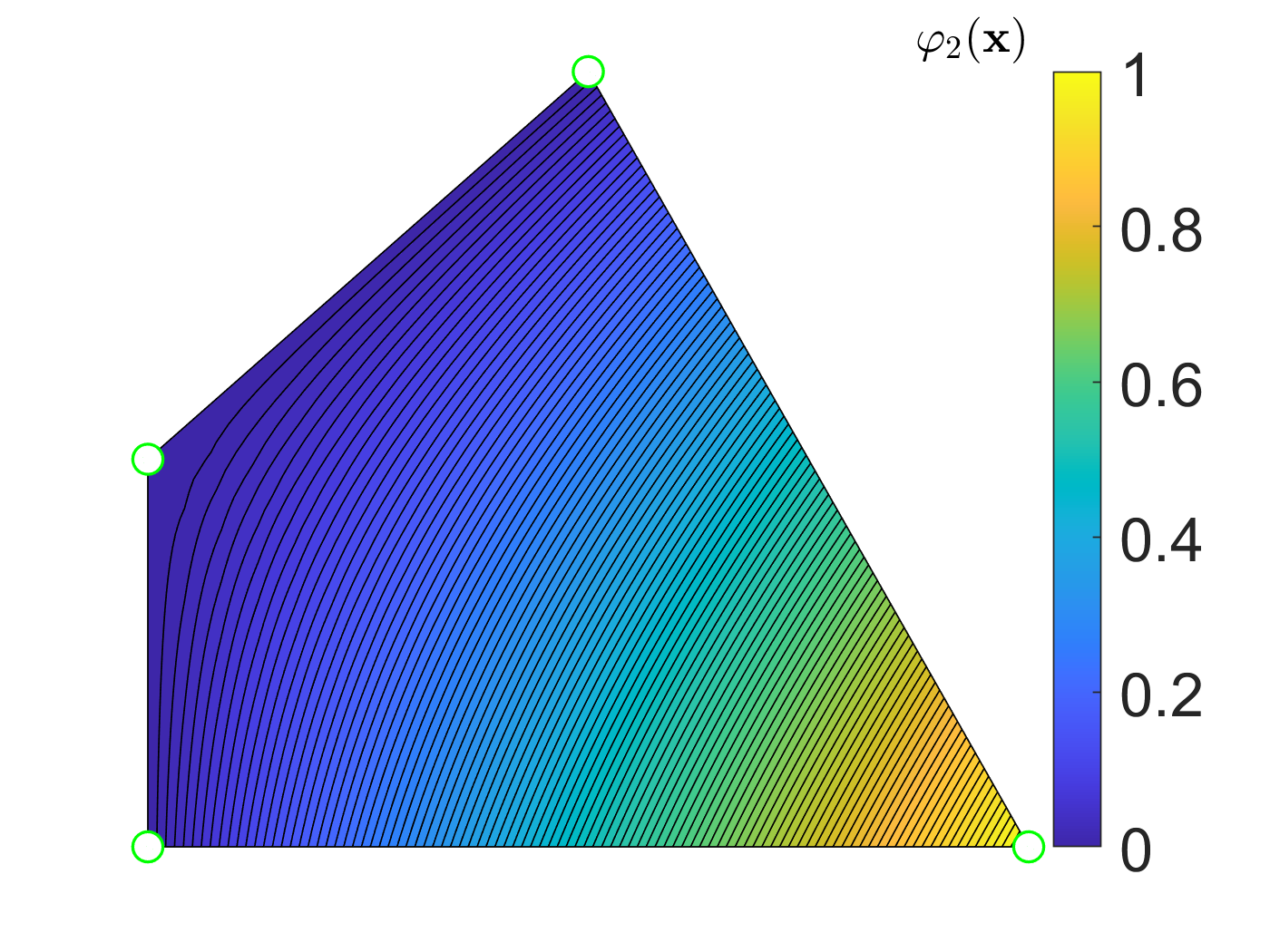}
    \end{subfigure}%
    \hfill
    \begin{subfigure}[b]{0.25\textwidth}
    \includegraphics[width=\textwidth]{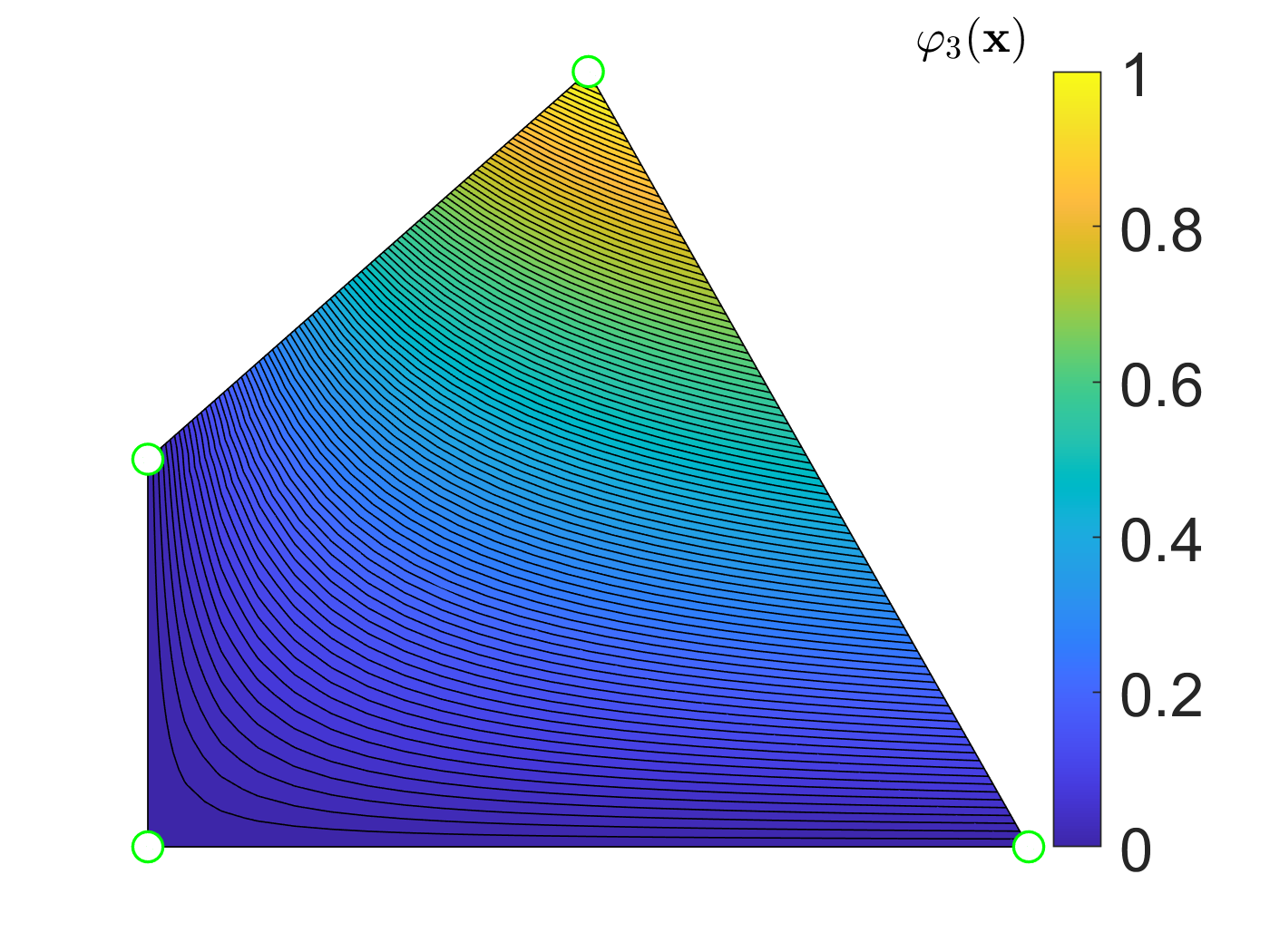}
    \end{subfigure}%
    \hfill
    \begin{subfigure}[b]{0.25\textwidth}
    \includegraphics[width=\textwidth]{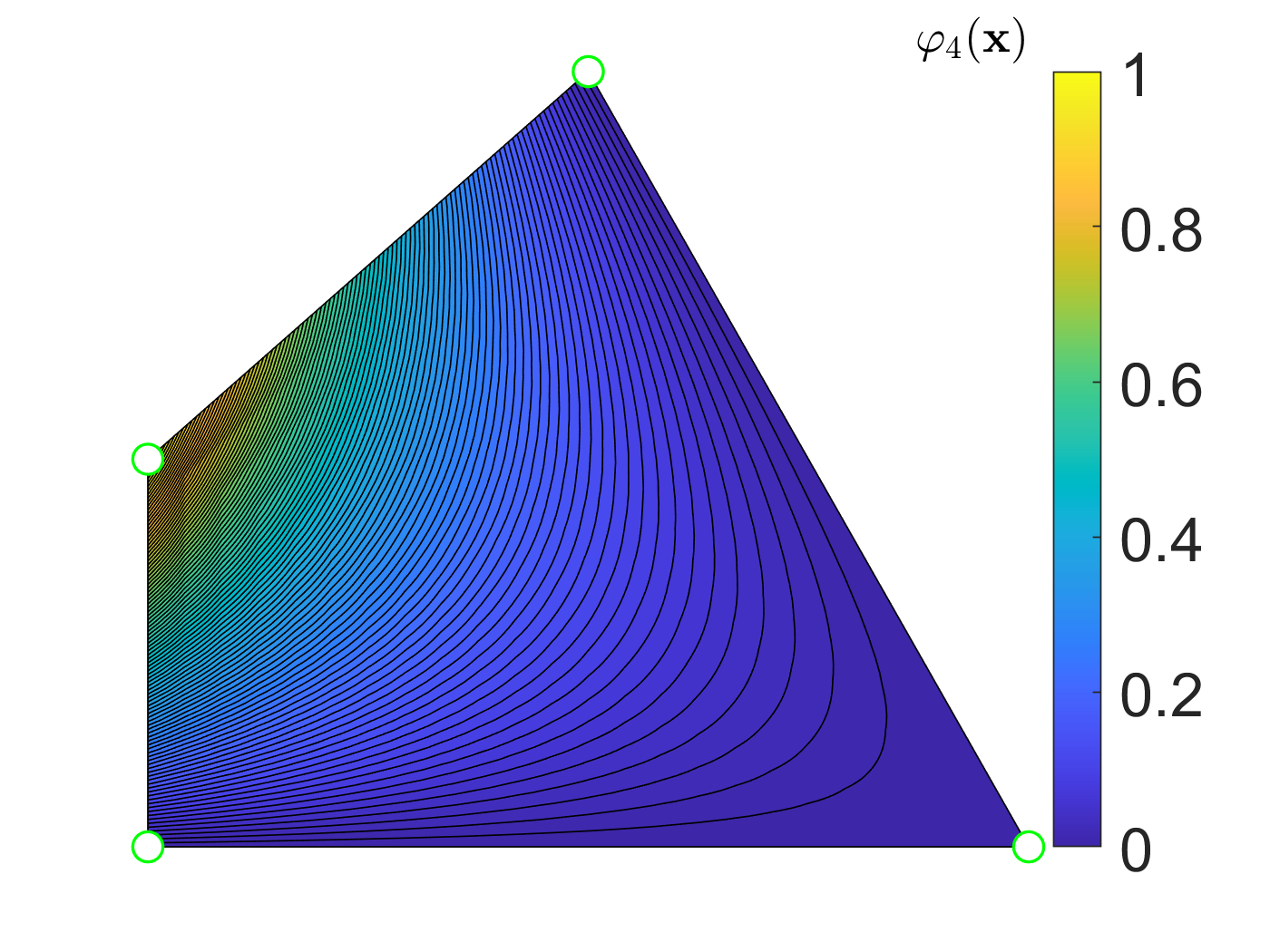}
    \end{subfigure}
    \caption{}
    \label{fig:exConv_wc}
\end{subfigure}

    \caption{Plots of (a) moment and (b) Wachspress
    coordinates on $Q$ for Example~\ref{ex:conv}.}
    \label{fig:exConv_mc_wc}
\end{figure}

\begin{figure}
    \centering
\begin{subfigure}[b]{\textwidth}
    \begin{subfigure}[b]{0.25\textwidth}
    \includegraphics[width=\textwidth]{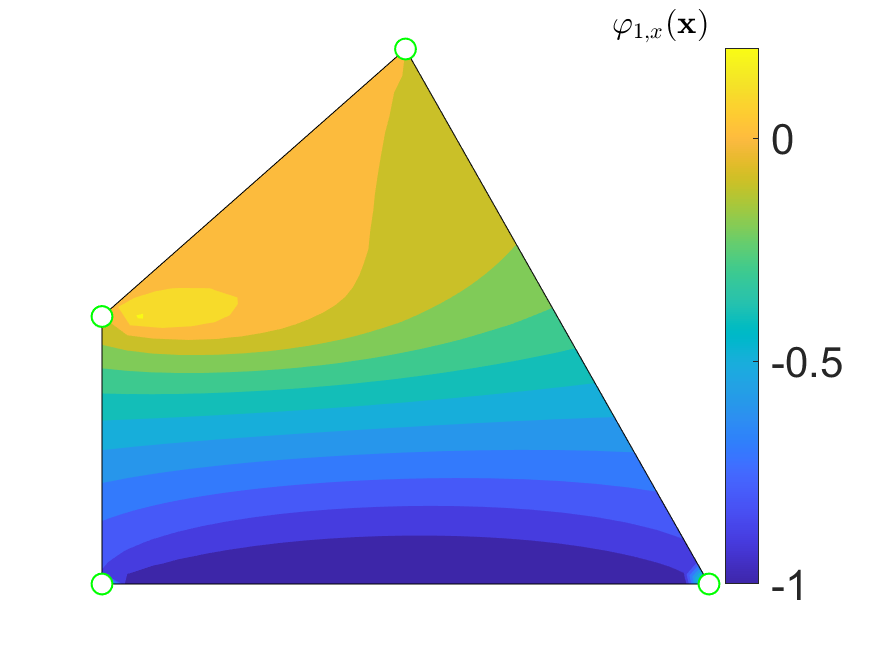}
    \end{subfigure}%
    \hfill
    \begin{subfigure}[b]{0.25\textwidth}
    \includegraphics[width=\textwidth]{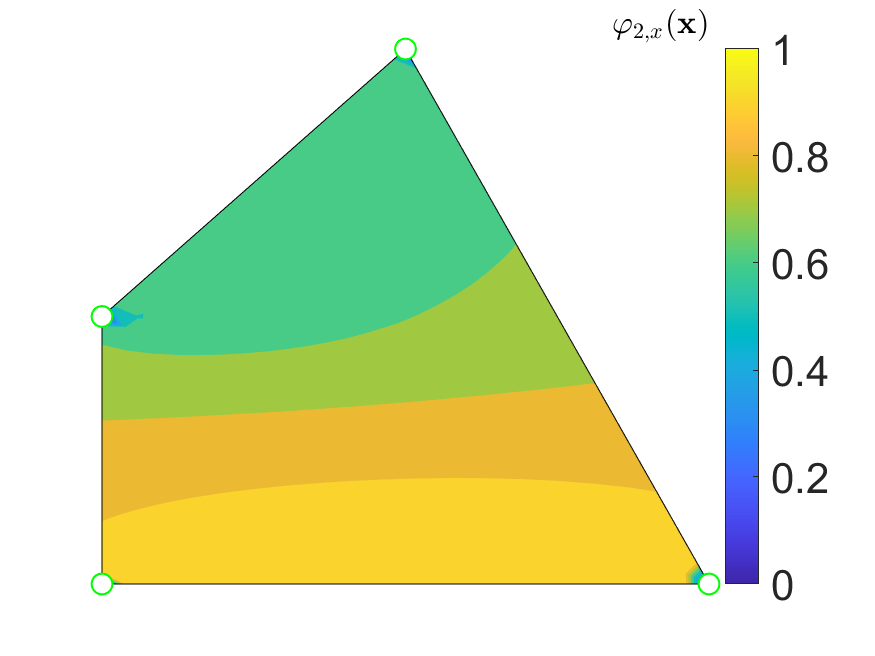}
    \end{subfigure}%
    \hfill
    \begin{subfigure}[b]{0.25\textwidth}
    \includegraphics[width=\textwidth]{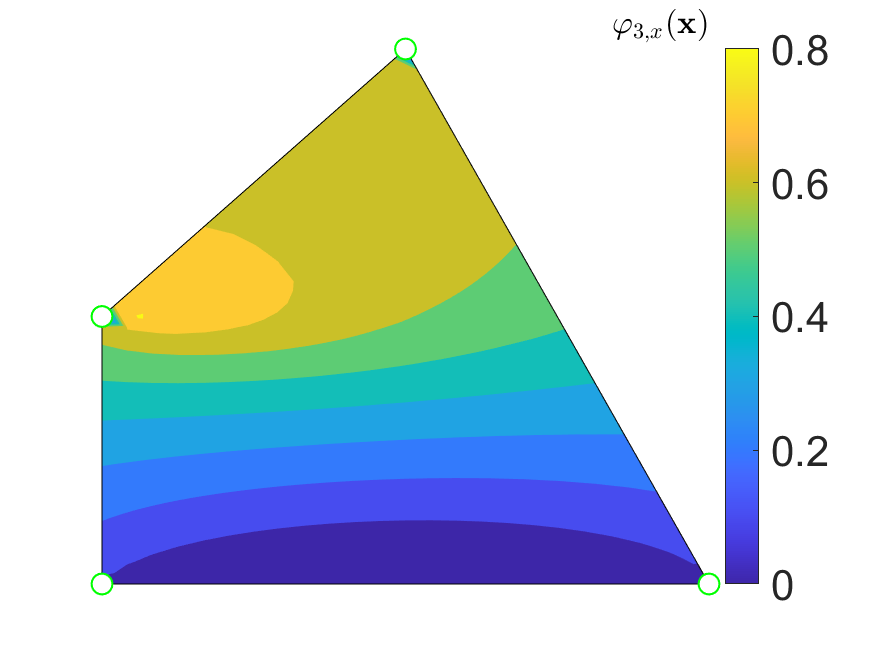}
    \end{subfigure}%
    \hfill
    \begin{subfigure}[b]{0.25\textwidth}
    \includegraphics[width=\textwidth]{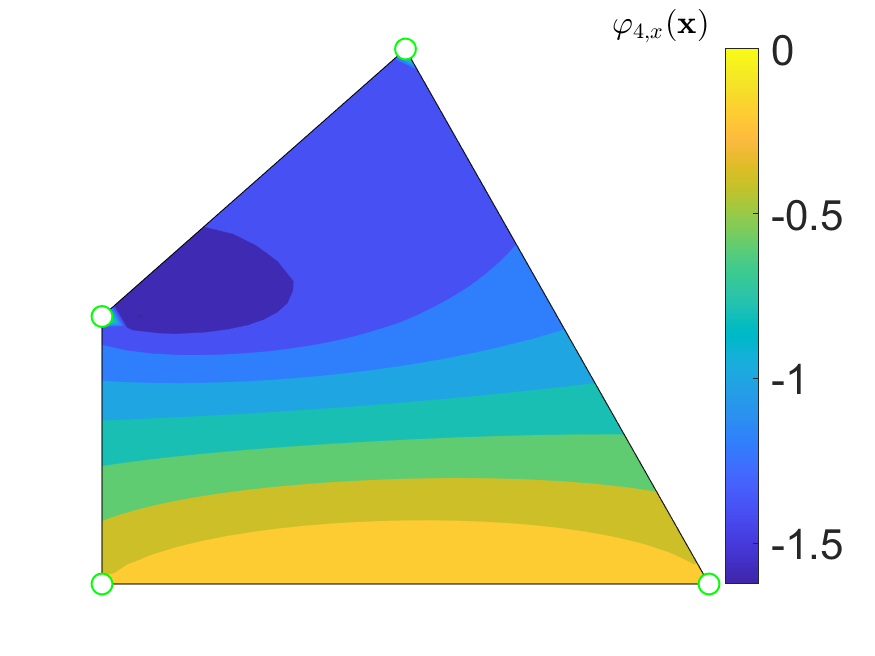}
    \end{subfigure}
    \caption{}
    \label{fig:exConv_mc_der_x}
\end{subfigure}
\begin{subfigure}[b]{\textwidth}
    \begin{subfigure}[b]{0.25\textwidth}
    \includegraphics[width=\textwidth]{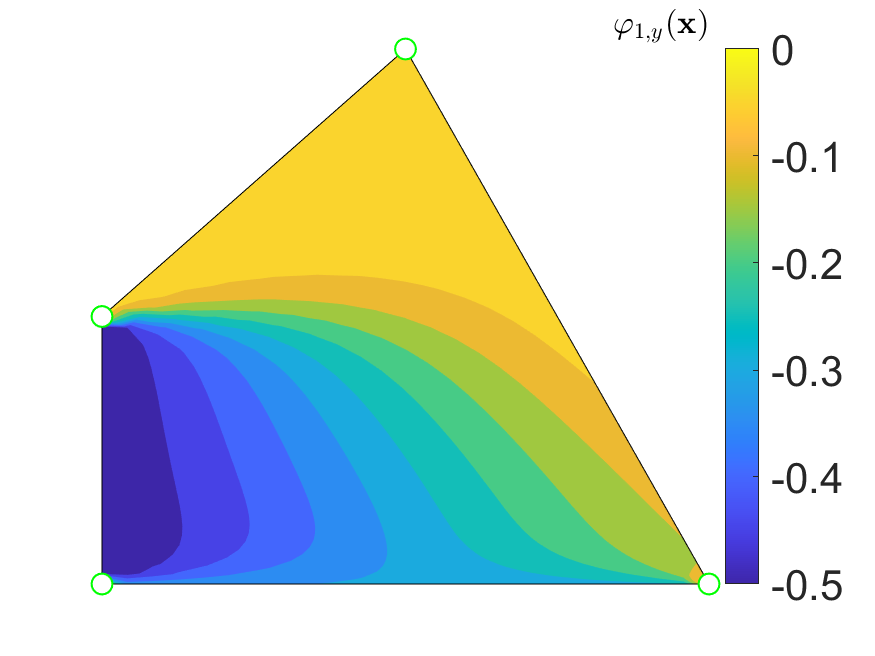}
    \end{subfigure}%
    \hfill
    \begin{subfigure}[b]{0.25\textwidth}
    \includegraphics[width=\textwidth]{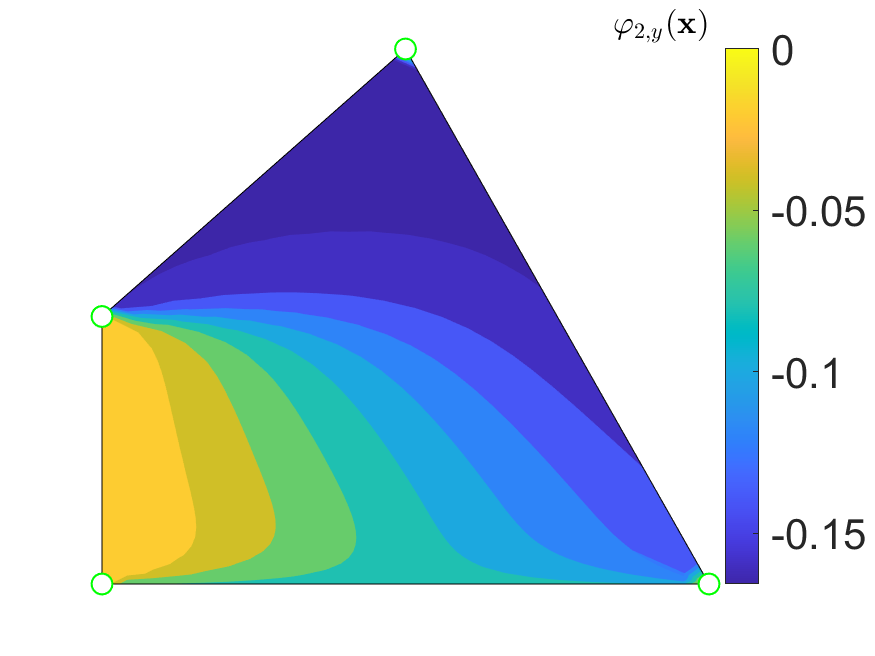}
    \end{subfigure}%
    \hfill
    \begin{subfigure}[b]{0.25\textwidth}
    \includegraphics[width=\textwidth]{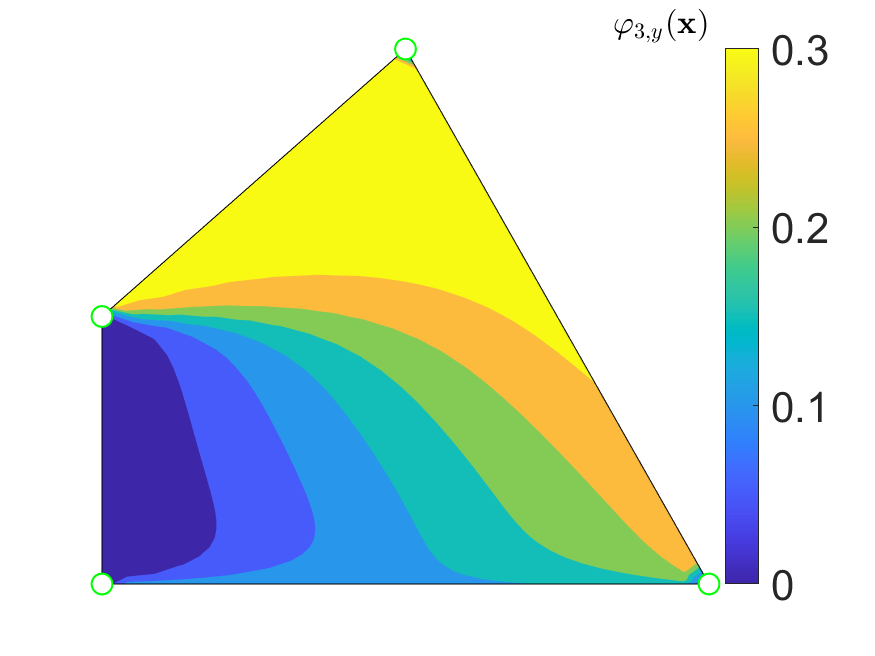}
    \end{subfigure}%
    \hfill
    \begin{subfigure}[b]{0.25\textwidth}
    \includegraphics[width=\textwidth]{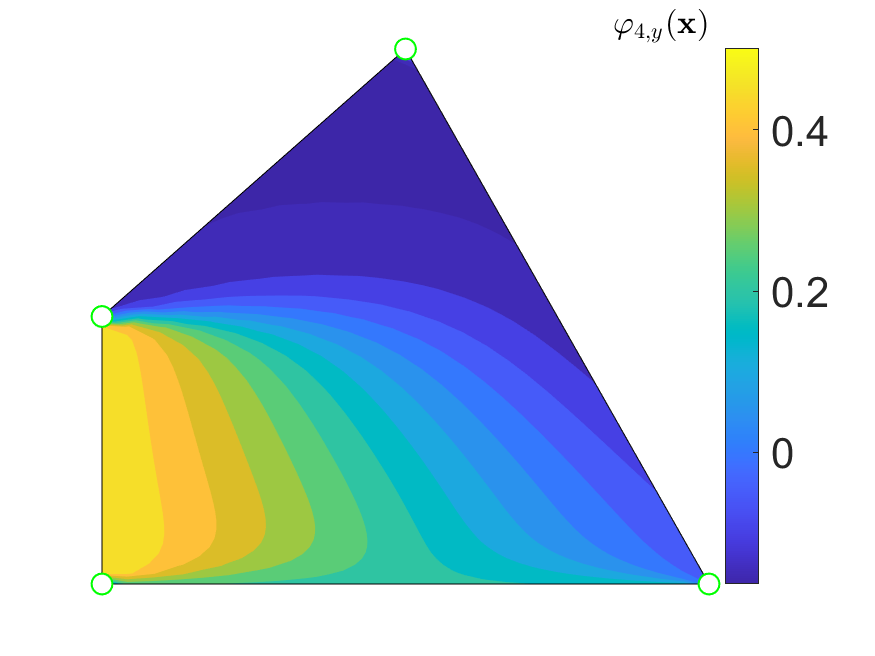}
    \end{subfigure}
    \caption{}
    \label{fig:exConv_mc_der_y}
\end{subfigure}
    \caption{Plots of (a) $x$-derivative and (b) $y$-derivative of moment coordinates $\varphi_i$ ($i=1,2,3,4$) on $Q$ for Example~\ref{ex:conv}.}
    \label{fig:exConv_mc_der}
\end{figure}
\begin{figure}
    \centering
\begin{subfigure}[b]{\textwidth}
    \begin{subfigure}[b]{0.25\textwidth}
    \includegraphics[width=\textwidth]{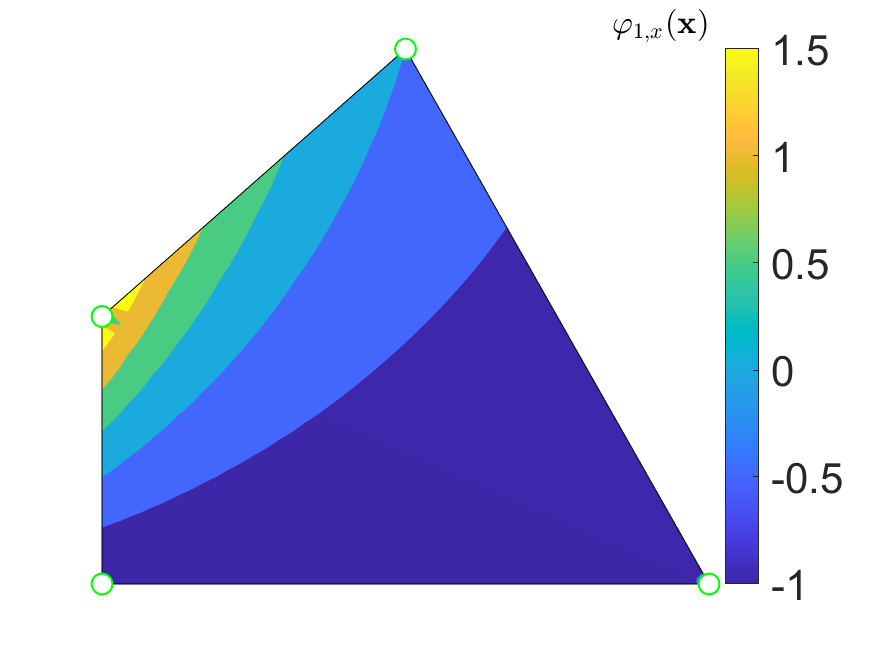}
    \end{subfigure}%
    \hfill
    \begin{subfigure}[b]{0.25\textwidth}
    \includegraphics[width=\textwidth]{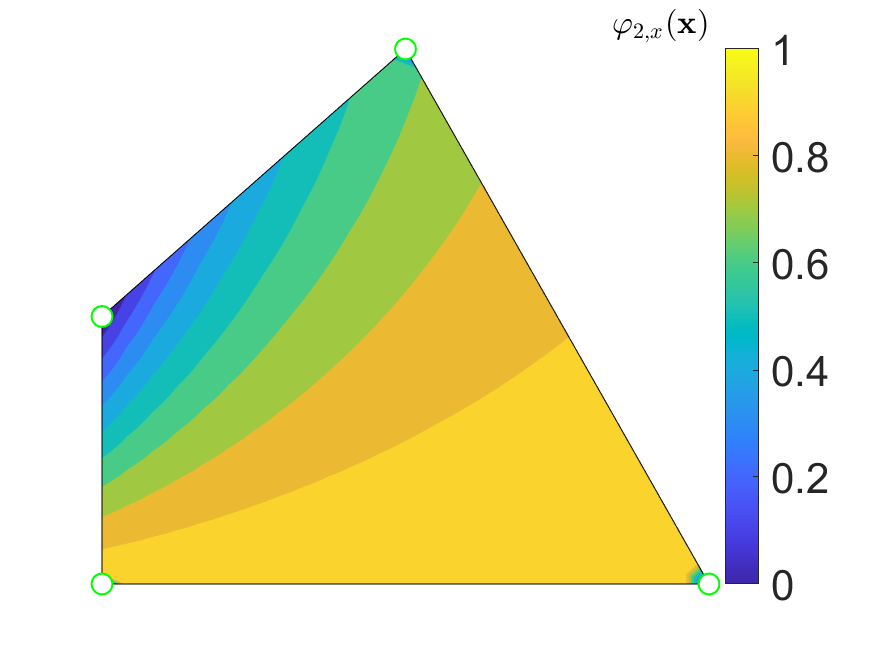}
    \end{subfigure}%
    \hfill
    \begin{subfigure}[b]{0.25\textwidth}
    \includegraphics[width=\textwidth]{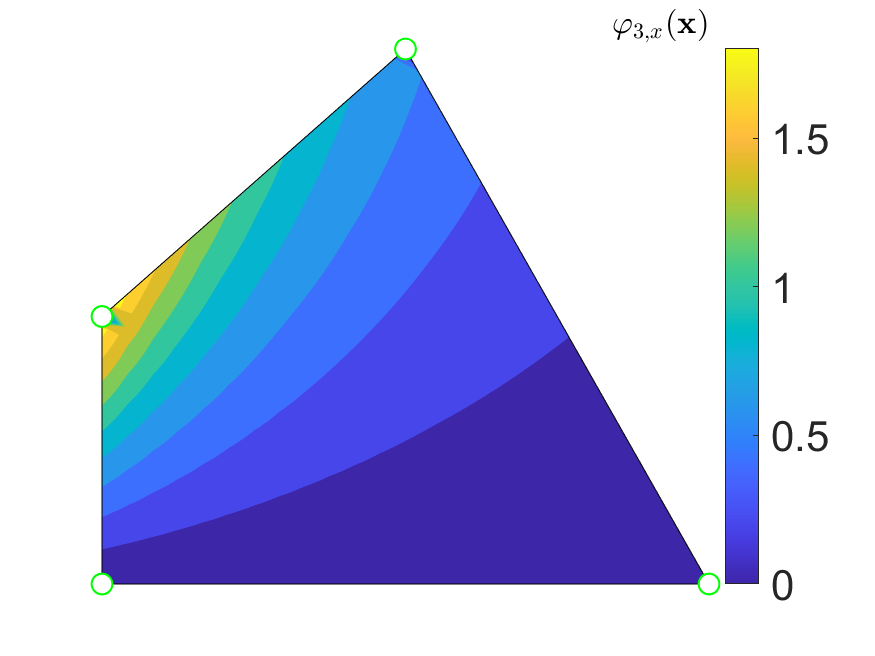}
    \end{subfigure}%
    \hfill
    \begin{subfigure}[b]{0.25\textwidth}
    \includegraphics[width=\textwidth]{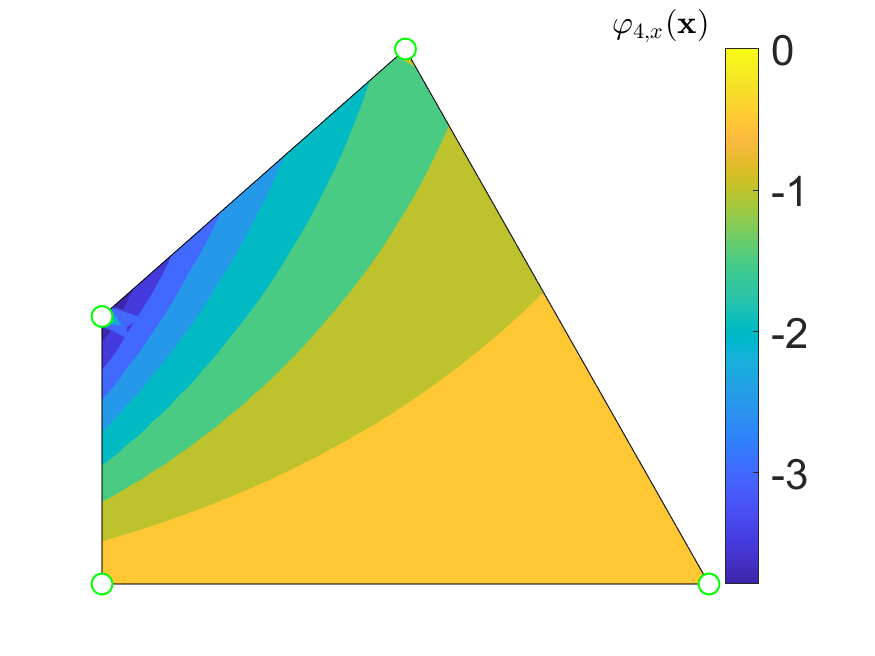}
    \end{subfigure}
    \caption{}
    \label{fig:exConv_wc_der_x}
\end{subfigure}
\begin{subfigure}[b]{\textwidth}
    \begin{subfigure}[b]{0.25\textwidth}
    \includegraphics[width=\textwidth]{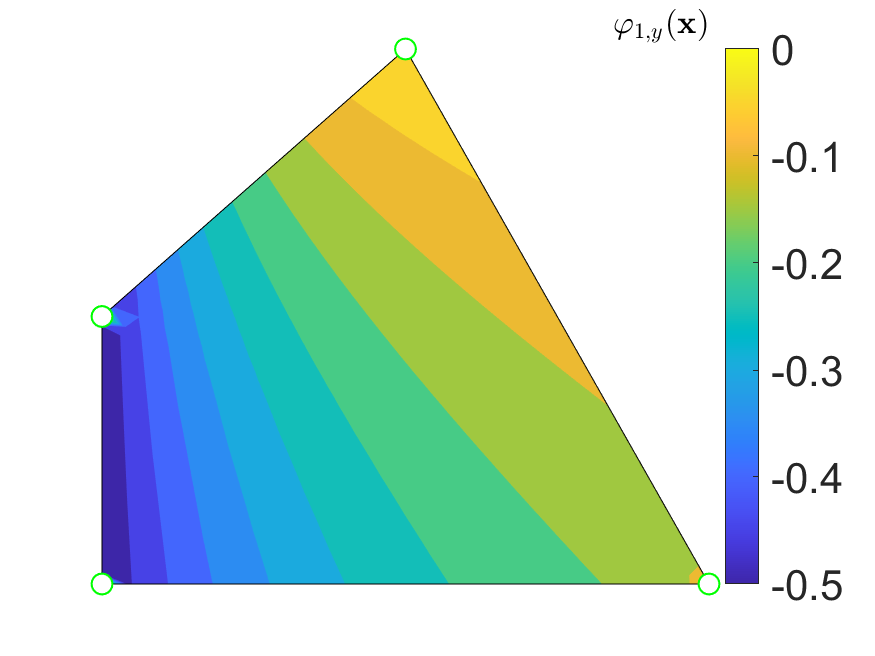}
    \end{subfigure}%
    \hfill
    \begin{subfigure}[b]{0.25\textwidth}
    \includegraphics[width=\textwidth]{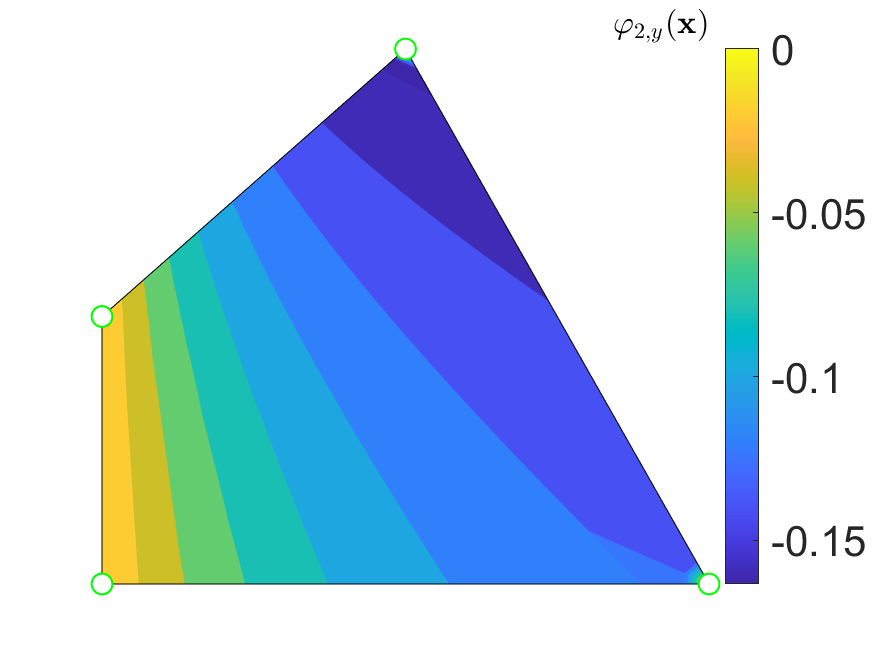}
    \end{subfigure}%
    \hfill
    \begin{subfigure}[b]{0.25\textwidth}
    \includegraphics[width=\textwidth]{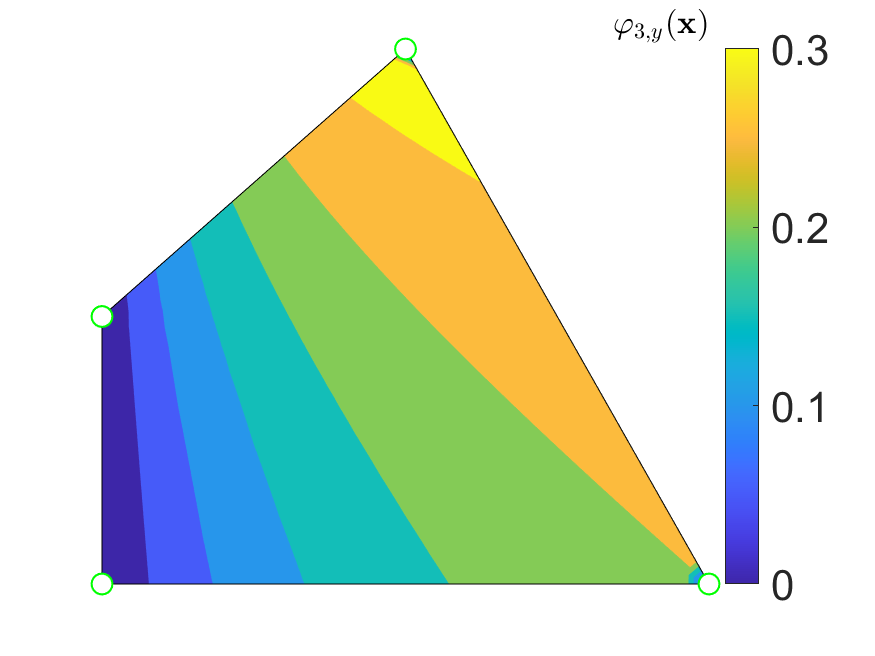}
    \end{subfigure}%
    \hfill
    \begin{subfigure}[b]{0.25\textwidth}
    \includegraphics[width=\textwidth]{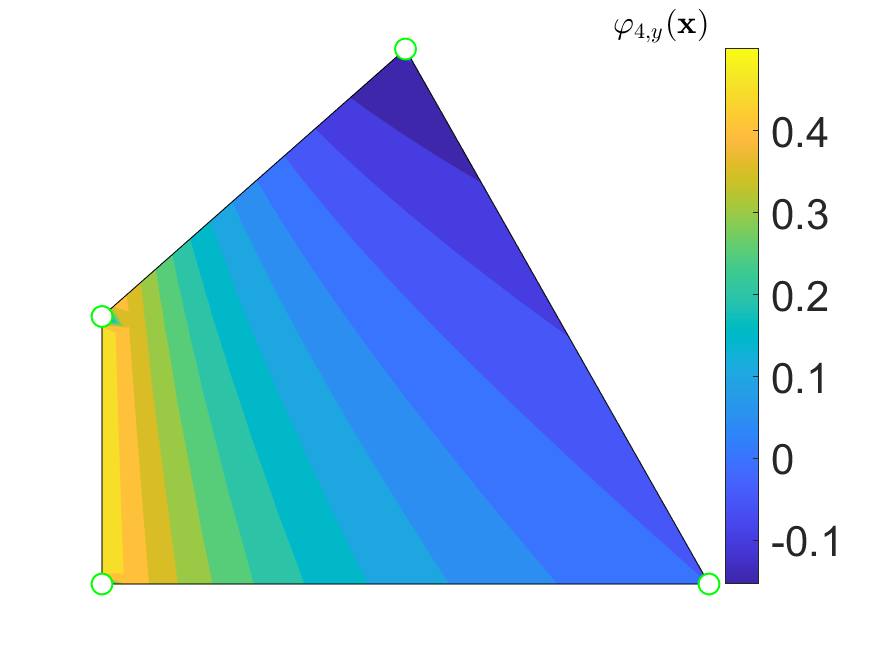}
    \end{subfigure}
    \caption{}
    \label{fig:exConv_wc_der_y}
\end{subfigure}
    \caption{Plots of (a) $x$-derivative and (b) $y$-derivative of Wachspress coordinates $\varphi_i$ ($i=1,2,3,4$) on $Q$ for Example~\ref{ex:conv}.}
    \label{fig:exConv_wc_der}
\end{figure}

\begin{exm}\label{ex:nonconv}
Let us consider the nonconvex quadrilateral $Q$ with vertices
\[
v_1=\bmat{0 \\ 0}, \ v_2=\bmat{2 \\ 0}, \ v_3=\bmat{1 \\ 4}, \ v_4=\bmat{1 \\ 2}.
\]
\end{exm}
On a nonconvex quadrilateral, Wachspress coordinates are not valid but moment (mean value) coordinates are permissible. In Figure~\ref{fig:exNonconv_mc} and Figure \ref{fig:exNonconv_mc_der}, moment coordinates and their partial derivatives, respectively, are presented for each vertex and once again we observe that these coordinates meet the desired properties of generalized barycentric coordinates given in~\eqref{eq:gbc}.
The analytical solutions for moment coordinates in this case are:

\[
\resizebox{\textwidth}{!}{$
\gbc=
\left[\begin{array}{c}
-\frac{2\,\sqrt{x^2-4\,x+y^2+4}-8\,\sqrt{x^2-2\,x+y^2-4\,y+5}-4\,\sqrt{x^2-2\,x+y^2-8\,y+17}-2\,x\,\sqrt{x^2-4\,x+y^2+4}+4\,x\,\sqrt{x^2-2\,x+y^2-4\,y+5}+2\,x\,\sqrt{x^2-2\,x+y^2-8\,y+17}+y\,\sqrt{x^2-2\,x+y^2-4\,y+5}+y\,\sqrt{x^2-2\,x+y^2-8\,y+17}}{2\,\left(4\,\sqrt{x^2-2\,x+y^2-4\,y+5}+2\,\sqrt{x^2-2\,x+y^2-8\,y+17}-\sqrt{x^2-4\,x+y^2+4}+\sqrt{x^2+y^2}\right)}\\
\frac{2\,x\,\sqrt{x^2+y^2}-2\,\sqrt{x^2+y^2}+4\,x\,\sqrt{x^2-2\,x+y^2-4\,y+5}+2\,x\,\sqrt{x^2-2\,x+y^2-8\,y+17}-y\,\sqrt{x^2-2\,x+y^2-4\,y+5}-y\,\sqrt{x^2-2\,x+y^2-8\,y+17}}{2\,\left(4\,\sqrt{x^2-2\,x+y^2-4\,y+5}+2\,\sqrt{x^2-2\,x+y^2-8\,y+17}-\sqrt{x^2-4\,x+y^2+4}+\sqrt{x^2+y^2}\right)}\\
\frac{2\,x\,\sqrt{x^2-4\,x+y^2+4}-y\,\sqrt{x^2-4\,x+y^2+4}+2\,x\,\sqrt{x^2+y^2}+y\,\sqrt{x^2+y^2}-4\,\sqrt{x^2+y^2}+2\,y\,\sqrt{x^2-2\,x+y^2-4\,y+5}}{2\,\left(4\,\sqrt{x^2-2\,x+y^2-4\,y+5}+2\,\sqrt{x^2-2\,x+y^2-8\,y+17}-\sqrt{x^2-4\,x+y^2+4}+\sqrt{x^2+y^2}\right)}\\
-\frac{4\,x\,\sqrt{x^2-4\,x+y^2+4}-y\,\sqrt{x^2-4\,x+y^2+4}+4\,x\,\sqrt{x^2+y^2}+y\,\sqrt{x^2+y^2}-8\,\sqrt{x^2+y^2}-2\,y\,\sqrt{x^2-2\,x+y^2-8\,y+17}}{2\,\left(4\,\sqrt{x^2-2\,x+y^2-4\,y+5}+2\,\sqrt{x^2-2\,x+y^2-8\,y+17}-\sqrt{x^2-4\,x+y^2+4}+\sqrt{x^2+y^2}\right)}
\end{array}\right].
$
}
\]
\begin{figure}
    \centering
    \begin{subfigure}[b]{0.25\textwidth}
    \includegraphics[width=\textwidth]{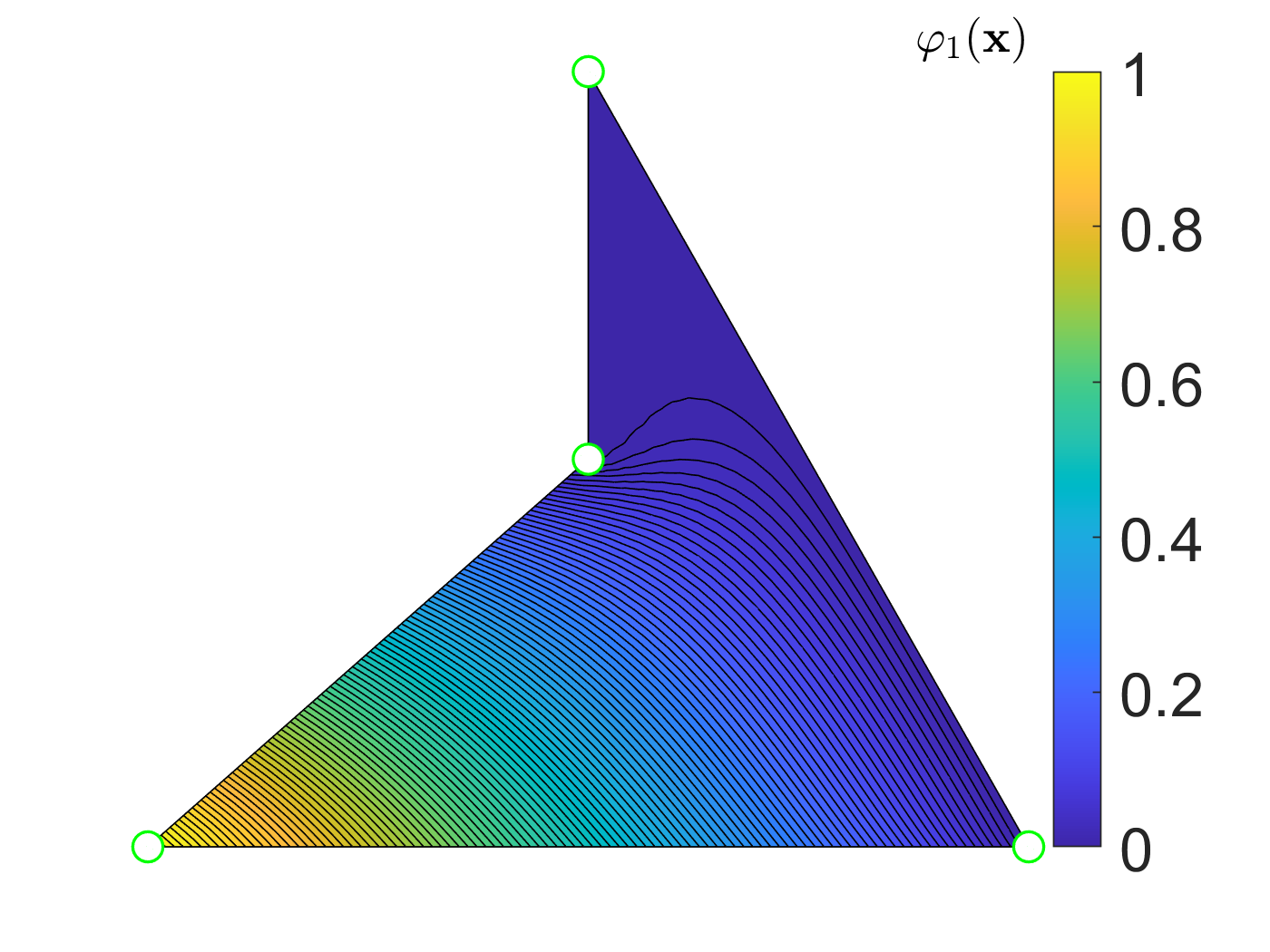}
    \end{subfigure}%
    \hfill
    \begin{subfigure}[b]{0.25\textwidth}
    \includegraphics[width=\textwidth]{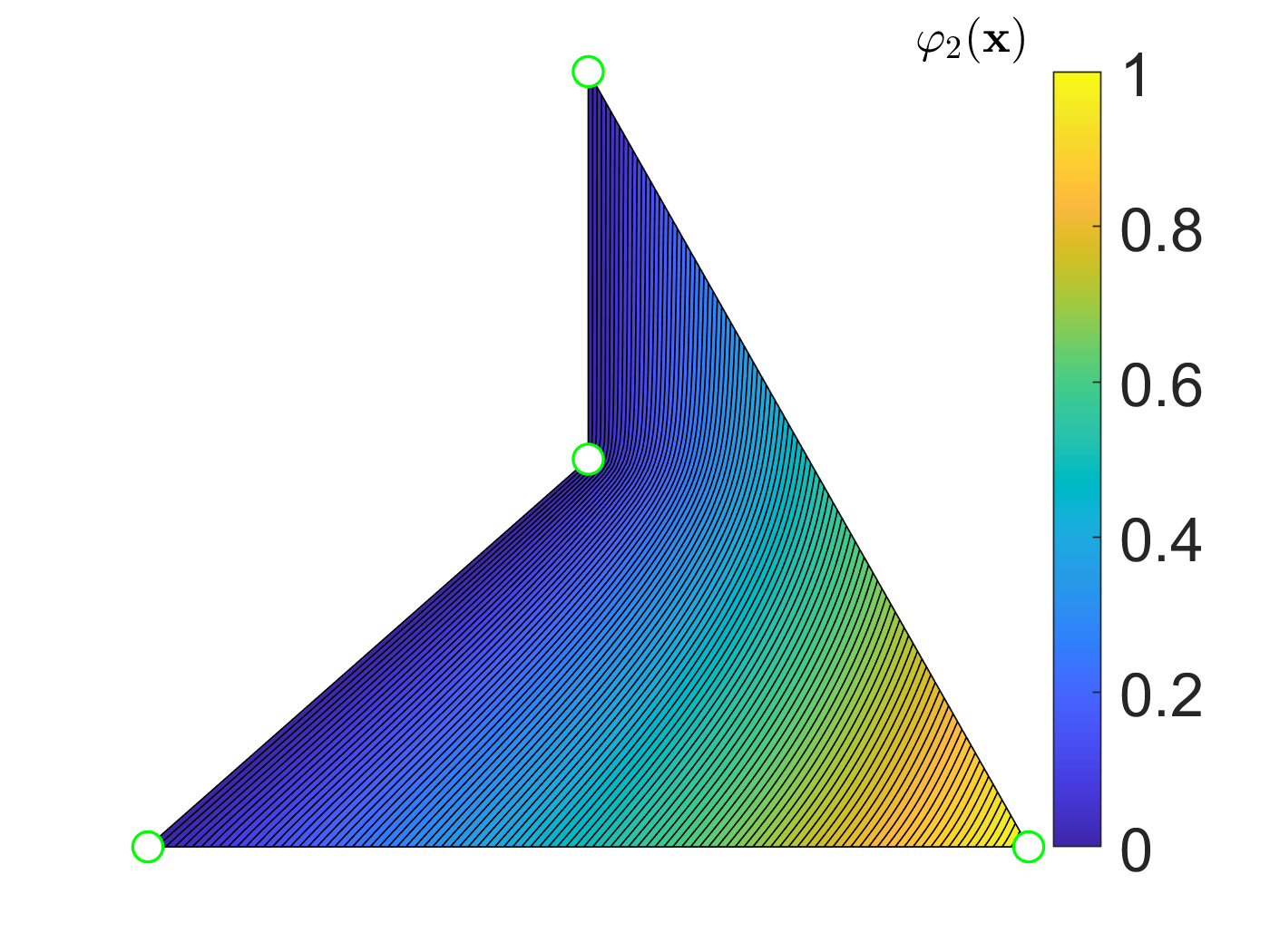}
    \end{subfigure}%
    \hfill
    \begin{subfigure}[b]{0.25\textwidth}
    \includegraphics[width=\textwidth]{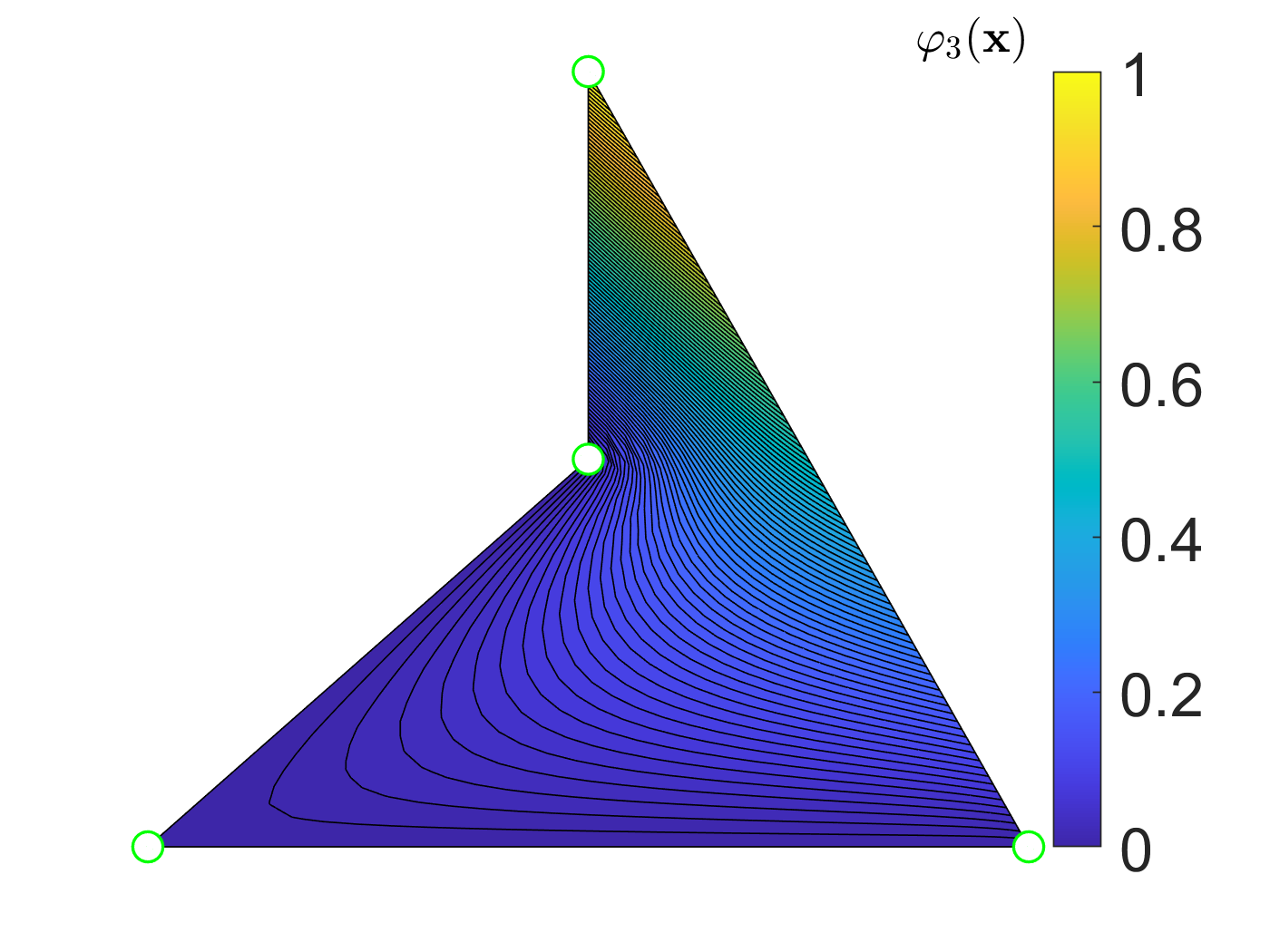}
    \end{subfigure}%
    \hfill
    \begin{subfigure}[b]{0.25\textwidth}
    \includegraphics[width=\textwidth]{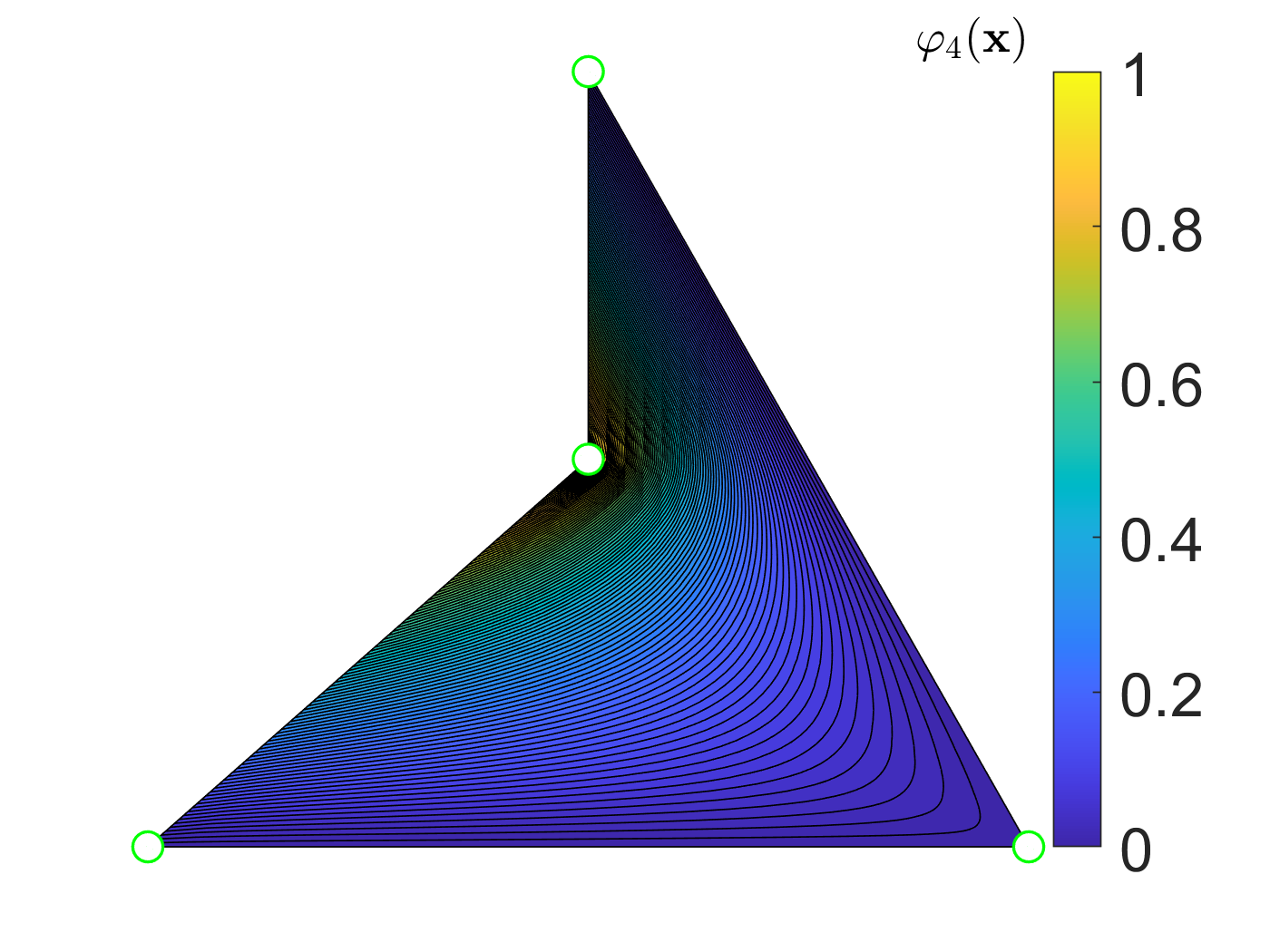}
    \end{subfigure}
\caption{Moment coordinates on $Q$ for Example~\ref{ex:nonconv}.}
\label{fig:exNonconv_mc}
\end{figure}
\begin{figure}
    \centering
\begin{subfigure}[b]{\textwidth}
    \begin{subfigure}[b]{0.25\textwidth}
    \includegraphics[width=\textwidth]{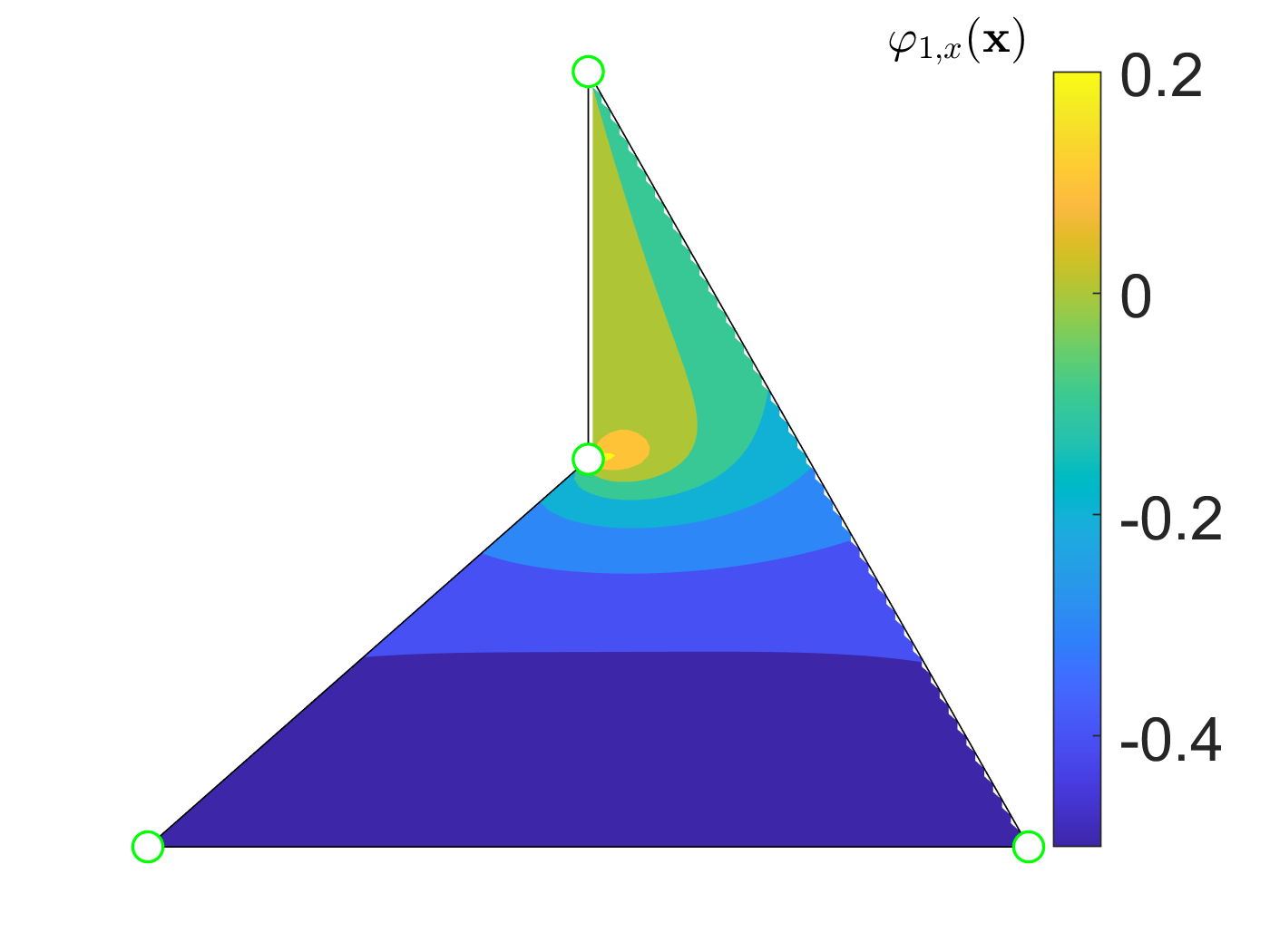}
    \end{subfigure}%
    \hfill
    \begin{subfigure}[b]{0.25\textwidth}
    \includegraphics[width=\textwidth]{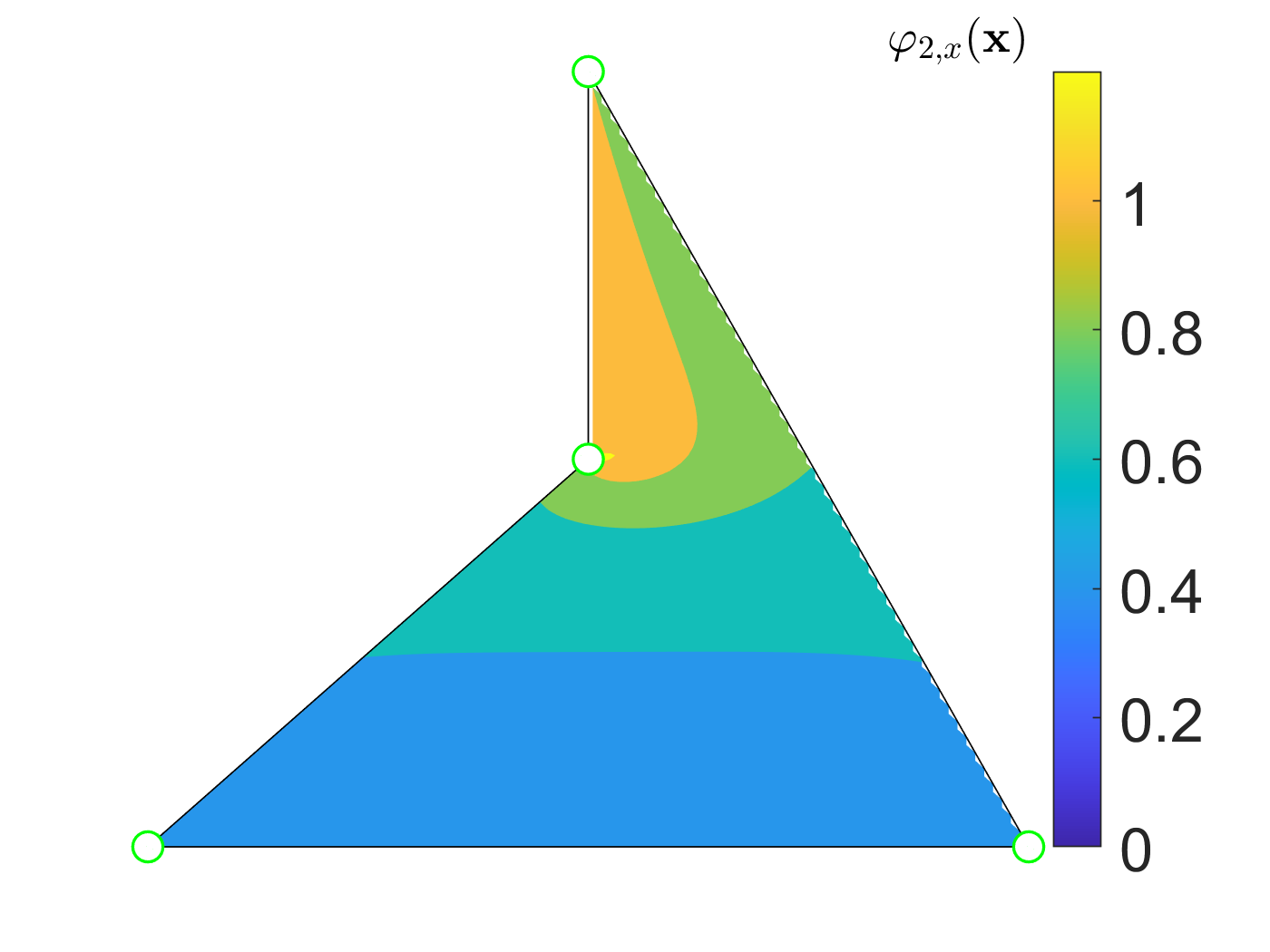}
    \end{subfigure}%
    \hfill
    \begin{subfigure}[b]{0.25\textwidth}
    \includegraphics[width=\textwidth]{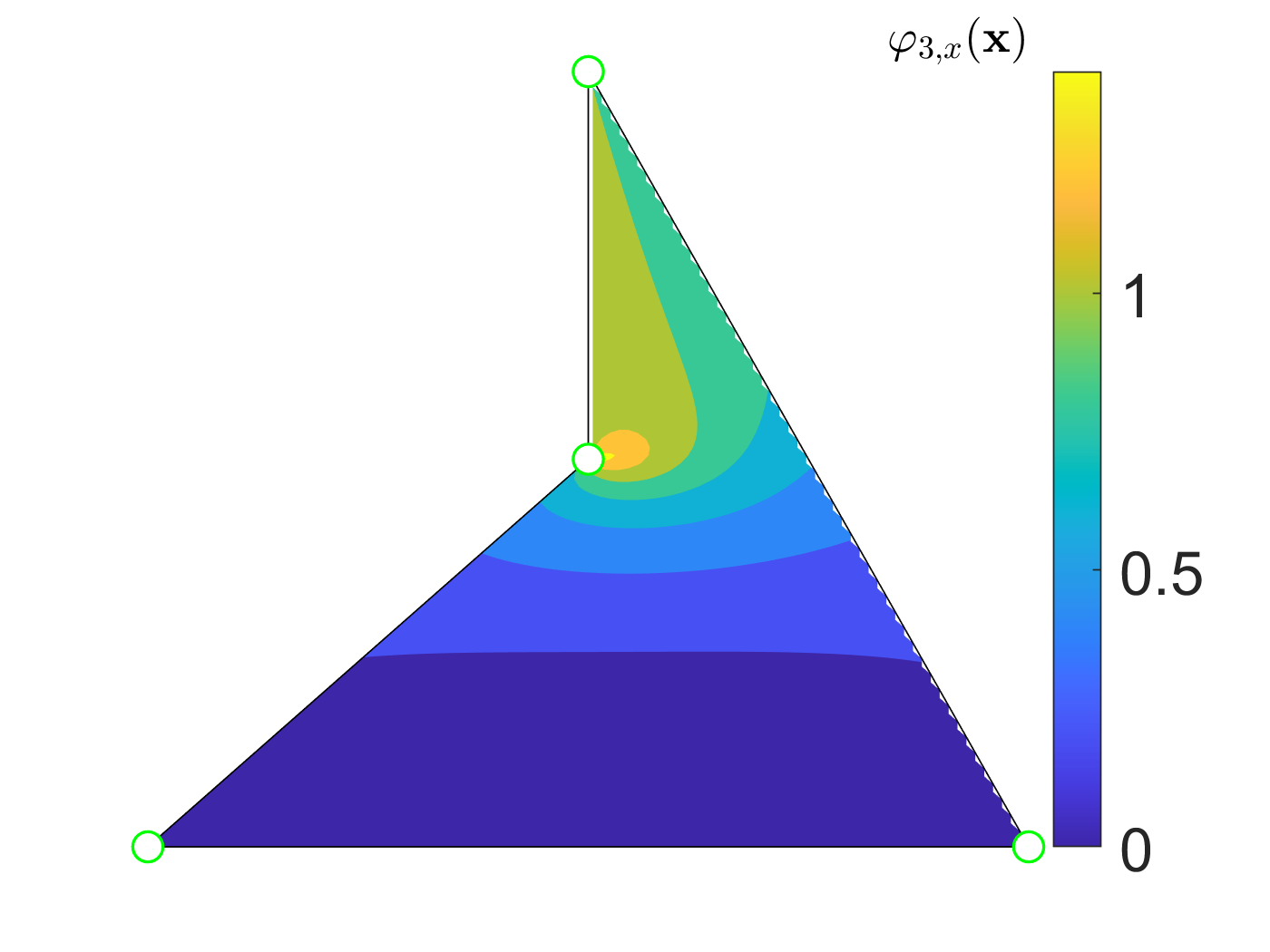}
    \end{subfigure}%
    \hfill
    \begin{subfigure}[b]{0.25\textwidth}
    \includegraphics[width=\textwidth]{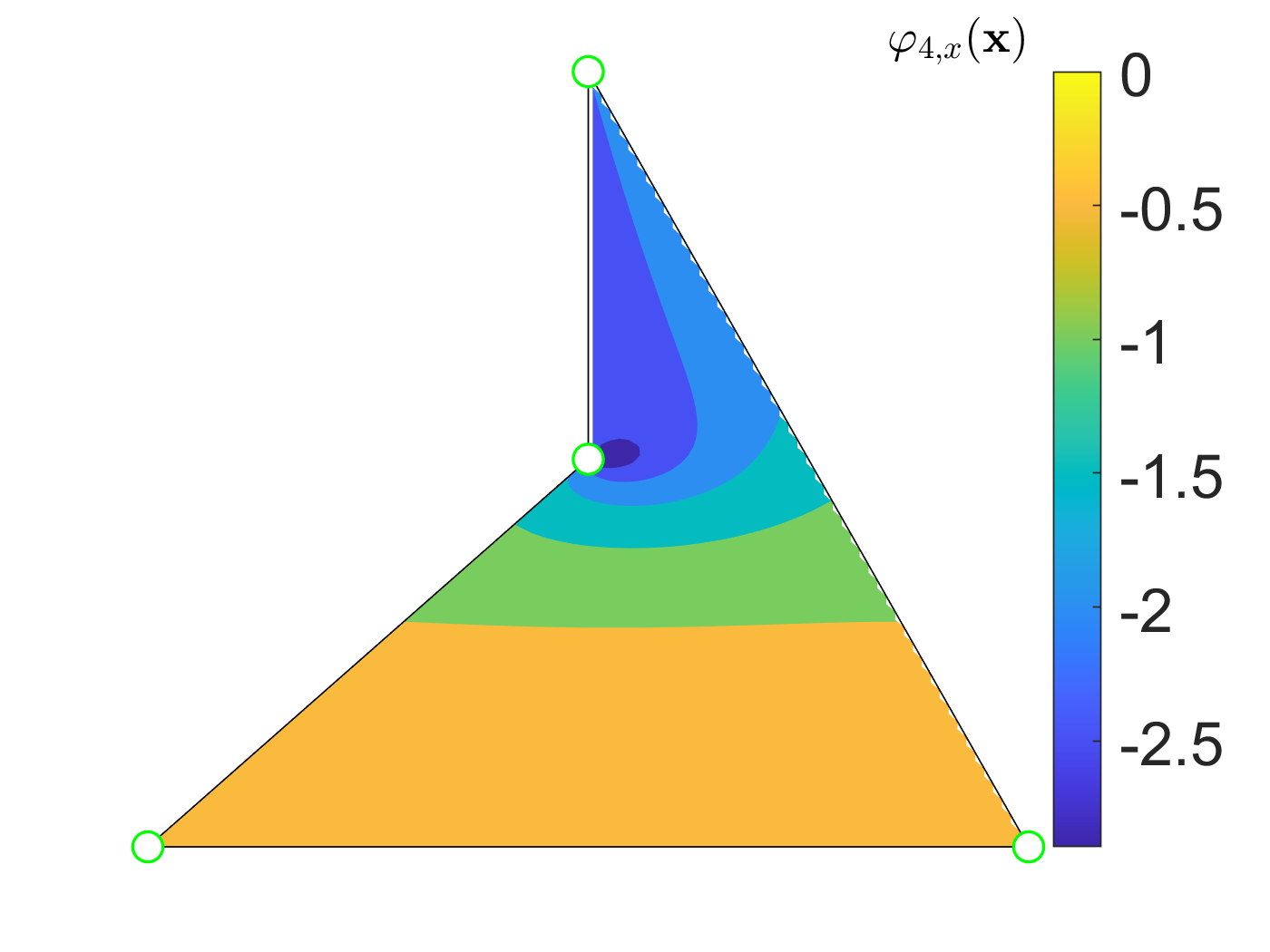}
    \end{subfigure}
    \caption{}
    \label{fig:exNonconv_mc_der_x}
\end{subfigure}
\begin{subfigure}[b]{\textwidth}
    \begin{subfigure}[b]{0.25\textwidth}
    \includegraphics[width=\textwidth]{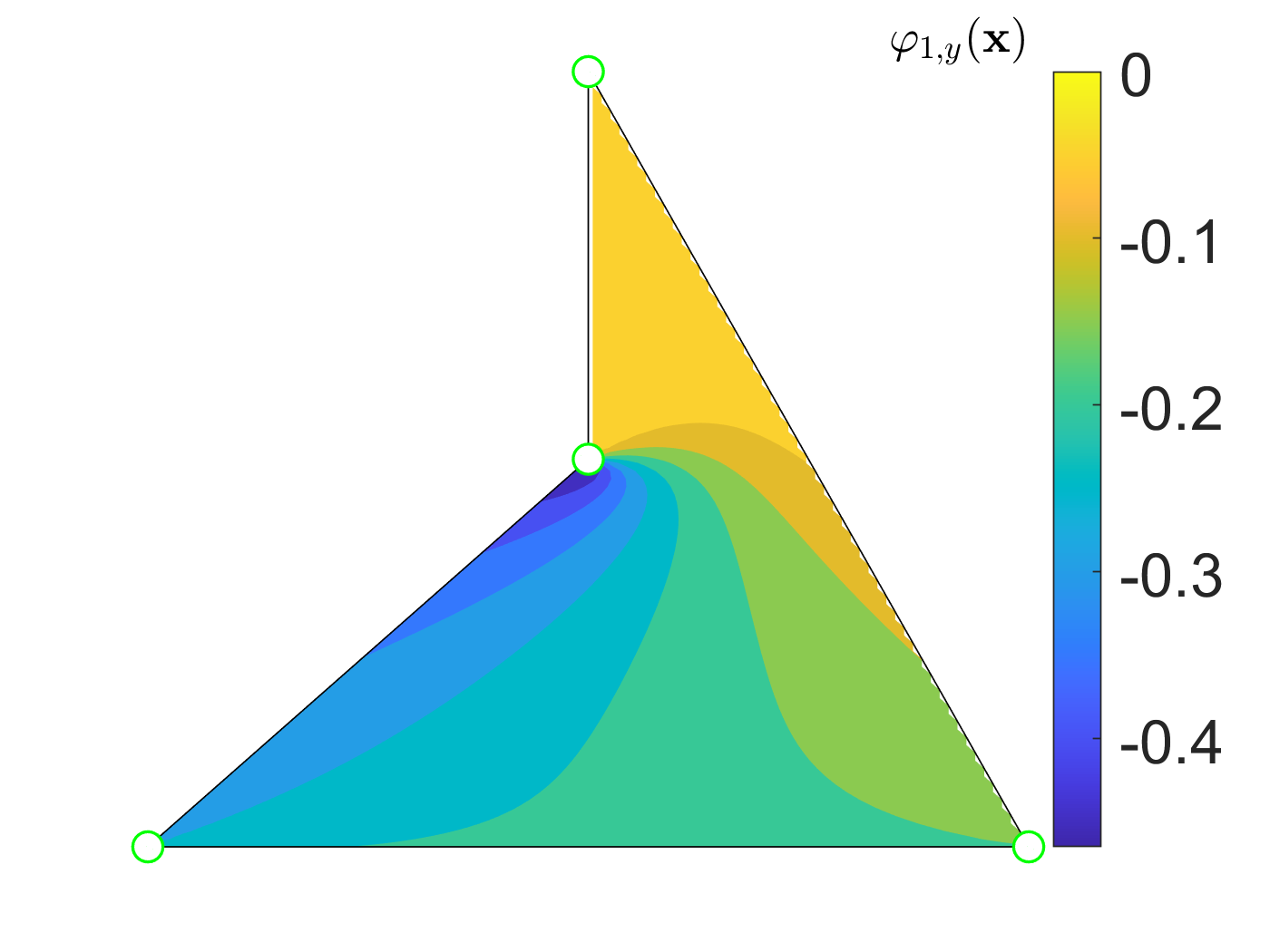}
    \end{subfigure}%
    \hfill
    \begin{subfigure}[b]{0.25\textwidth}
    \includegraphics[width=\textwidth]{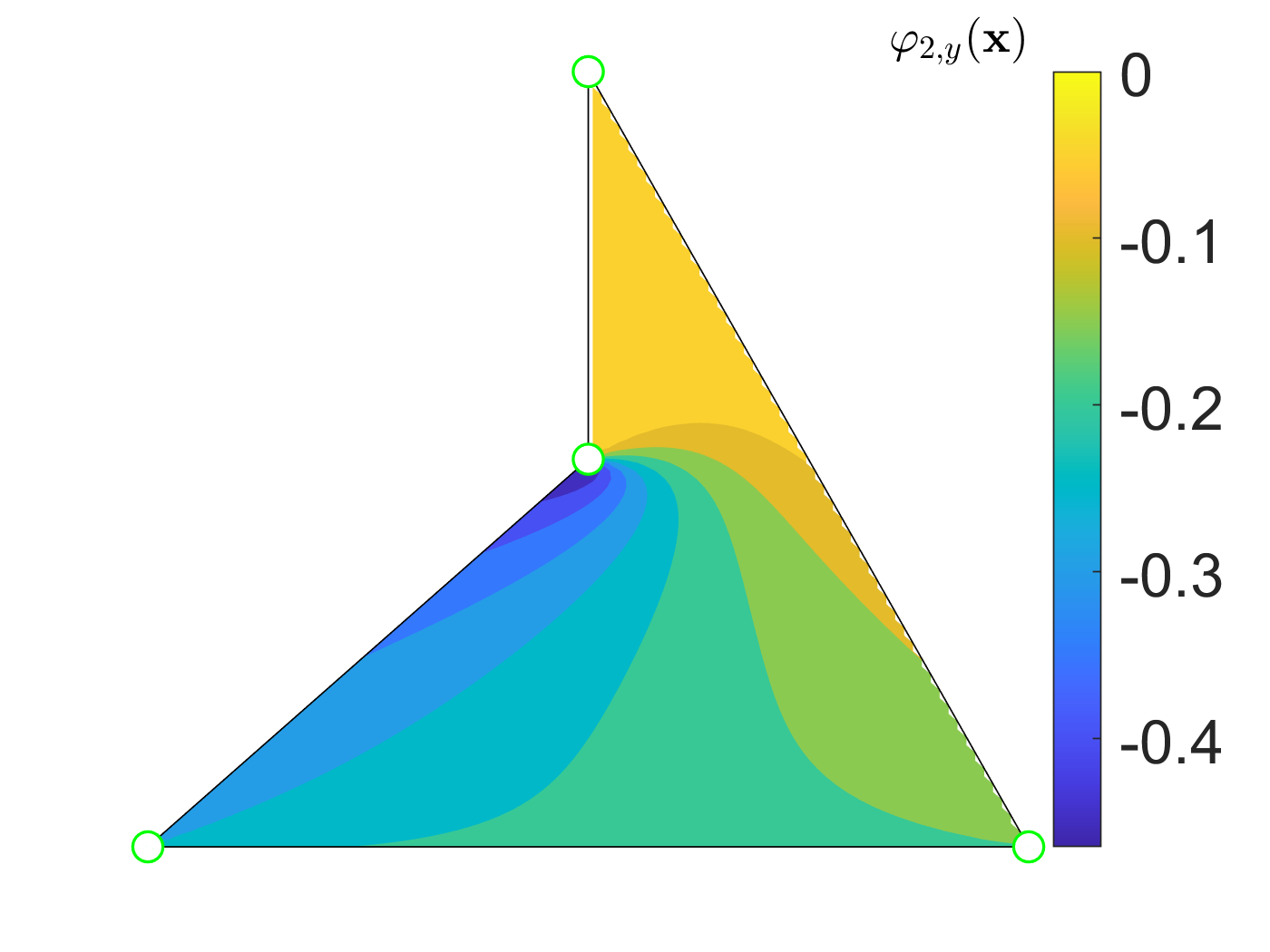}
    \end{subfigure}%
    \hfill
    \begin{subfigure}[b]{0.25\textwidth}
    \includegraphics[width=\textwidth]{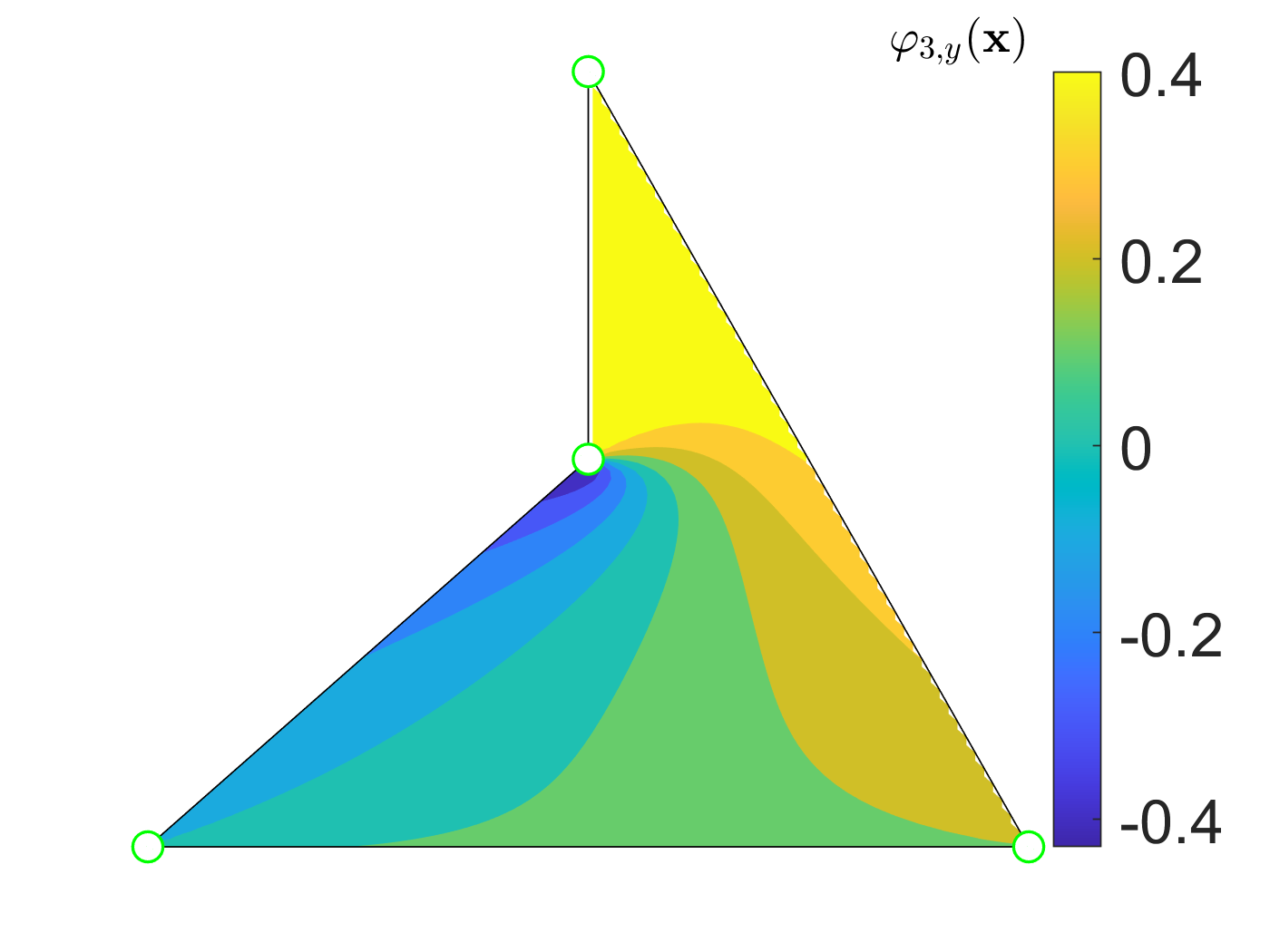}
    \end{subfigure}%
    \hfill
    \begin{subfigure}[b]{0.25\textwidth}
    \includegraphics[width=\textwidth]{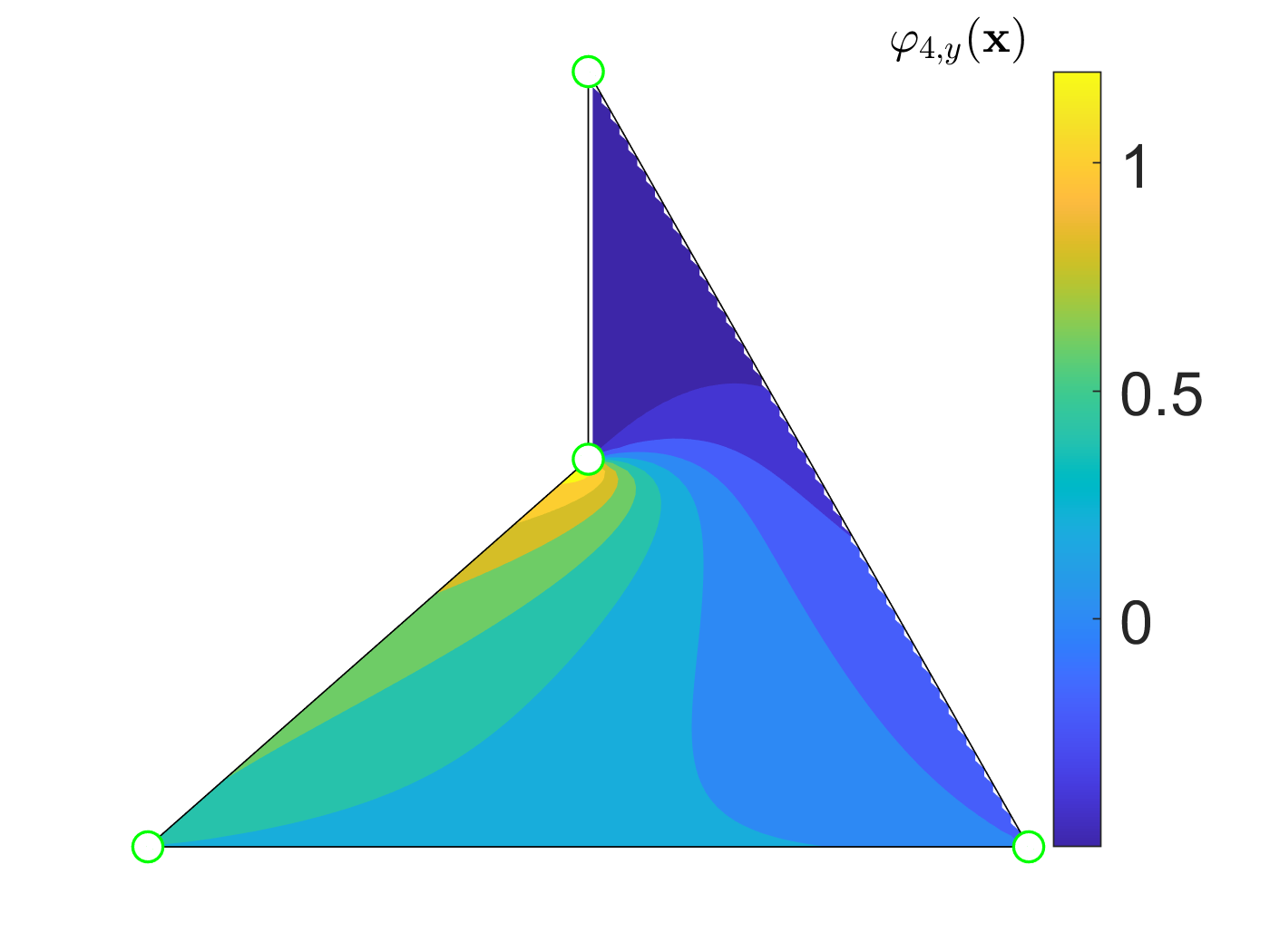}
    \end{subfigure}
    \caption{}
    \label{fig:exNonconv_mc_der_y}
\end{subfigure}
    \caption{Plots of (a) $x$-derivative and (b) $y$-derivative of moment coordinate $\varphi_i$ ($i=1,2,3,4$) on $Q$ for Example~\ref{ex:nonconv}.}
    \label{fig:exNonconv_mc_der}
\end{figure}

Moment coordinates are also well defined and provide the expected solutions in the case of a degenerate quadrilateral that has two consecutive collinear sides, as we show in the next example.
\begin{exm}\label{ex:conv_degenerate}
Let us consider the convex quadrilateral $Q$ with vertices
\[
v_1=\bmat{0 \\ 0}, \ v_2=\bmat{1 \\ 0}, \ v_3=\bmat{2 \\ 0}, \ v_4=\bmat{0 \\ 1},
\]
so that $Q$ has the shape of a triangle.
\end{exm}
In Figure \ref{fig:exConv_degenerate}, moment coordinates are presented for each vertex, whose analytical expressions are given by
\[
\resizebox{\textwidth}{!}{$
\varphi=
\left[\begin{array}{c} -\frac{x\,\sqrt{x^2-2\,x+y^2+1}+x\,\sqrt{x^2-4\,x+y^2+4}+2\,y\,\sqrt{x^2-2\,x+y^2+1}+y\,\sqrt{x^2-4\,x+y^2+4}-y\,\sqrt{x^2+y^2-2\,y+1}-2\,\sqrt{x^2-2\,x+y^2+1}-\sqrt{x^2-4\,x+y^2+4}}{2\,\sqrt{x^2-2\,x+y^2+1}+\sqrt{x^2-4\,x+y^2+4}+\sqrt{x^2+y^2}}\\ -\frac{2\,y\,\sqrt{x^2+y^2-2\,y+1}-x\,\sqrt{x^2-4\,x+y^2+4}+x\,\sqrt{x^2+y^2}+2\,y\,\sqrt{x^2+y^2}-2\,\sqrt{x^2+y^2}}{2\,\sqrt{x^2-2\,x+y^2+1}+\sqrt{x^2-4\,x+y^2+4}+\sqrt{x^2+y^2}}\\ \frac{x\,\sqrt{x^2-2\,x+y^2+1}+y\,\sqrt{x^2+y^2-2\,y+1}+x\,\sqrt{x^2+y^2}+y\,\sqrt{x^2+y^2}-\sqrt{x^2+y^2}}{2\,\sqrt{x^2-2\,x+y^2+1}+\sqrt{x^2-4\,x+y^2+4}+\sqrt{x^2+y^2}}\\ y \end{array}\right].
$
}
\]
As expected from the geometry of the example, the last component $\varphi_4$ is a linear function.
\begin{figure}
    \centering
    \begin{subfigure}[b]{0.25\textwidth}
    \includegraphics[width=\textwidth]{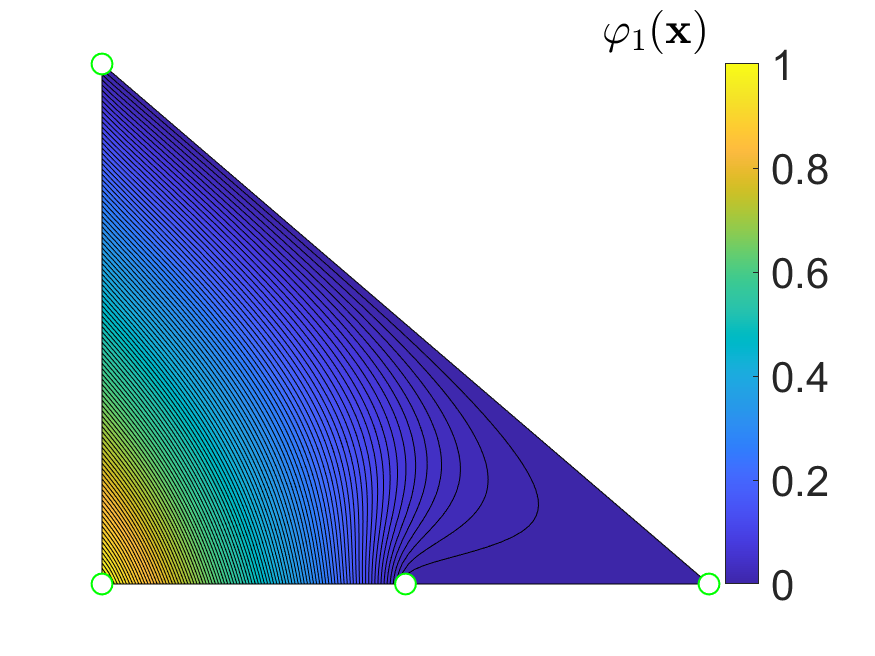}
    \end{subfigure}%
    \hfill
    \begin{subfigure}[b]{0.25\textwidth}
    \includegraphics[width=\textwidth]{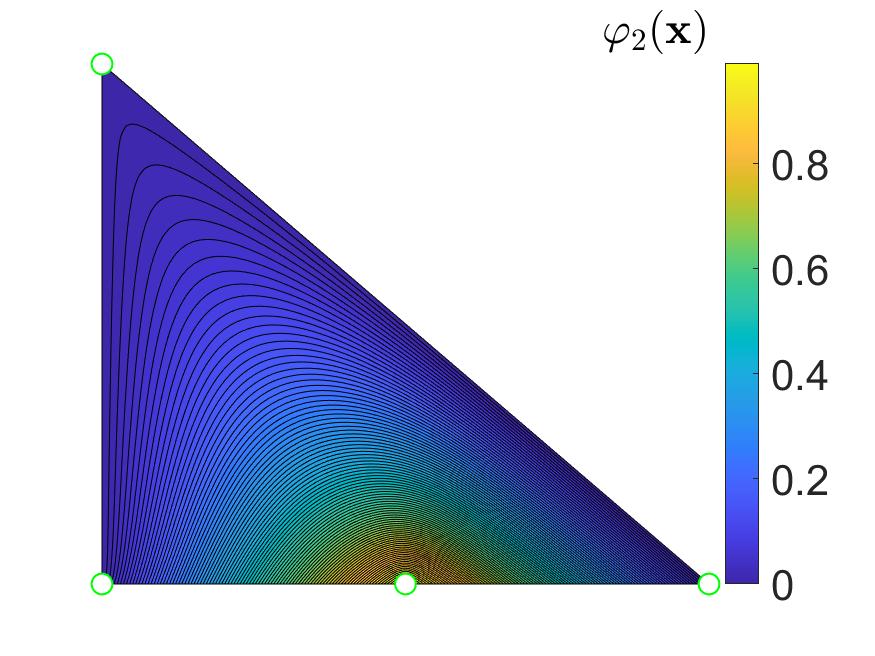}
    \end{subfigure}%
    \hfill
    \begin{subfigure}[b]{0.25\textwidth}
    \includegraphics[width=\textwidth]{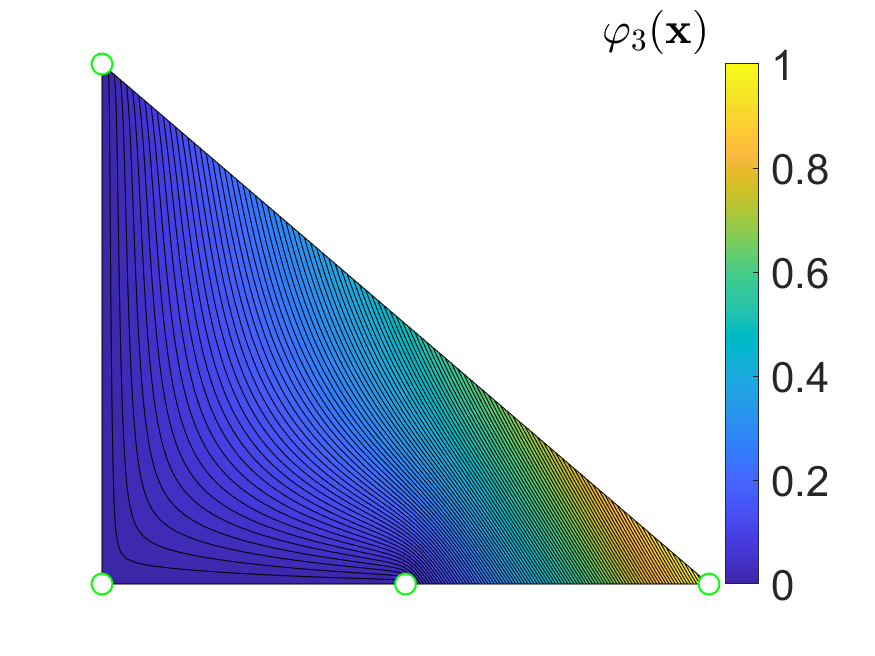}
    \end{subfigure}%
    \hfill
    \begin{subfigure}[b]{0.25\textwidth}
    \includegraphics[width=\textwidth]{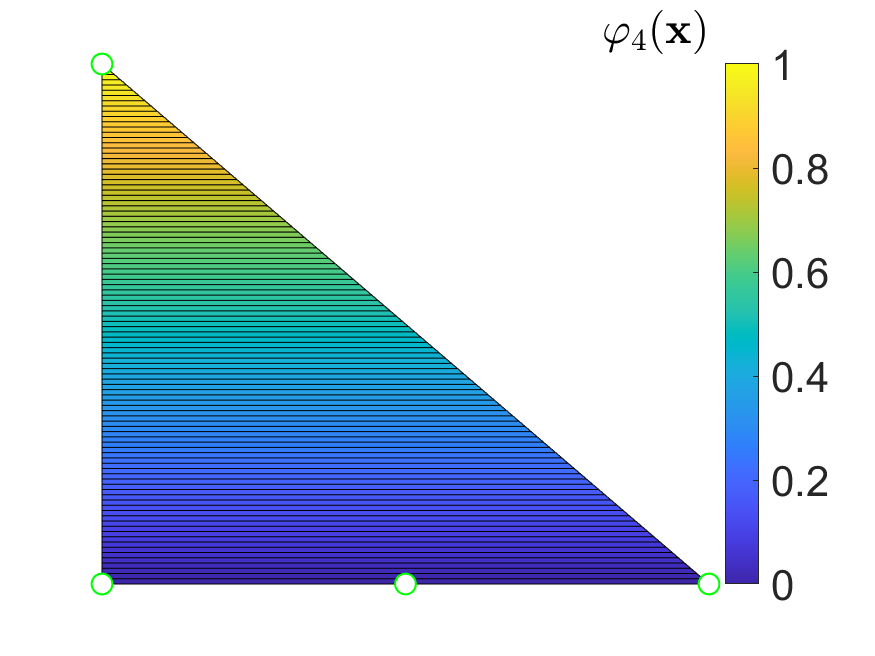}
    \end{subfigure}
\caption{Moment coordinates on $Q$ for Example~\ref{ex:conv_degenerate}.}
\label{fig:exConv_degenerate}
\end{figure}

\begin{rem}\label{rem:cond}
It is worth noticing that there is a potential ill-conditioning of the linear system \eqref{eq:momSystem}, but this is a mere reflection of the fact that the barycentric coordinates are not necessarily stable under perturbation. Indeed, this is not a peculiarity of the system formulation. \\
For example, let us consider the nonconvex quadrilateral $Q_1$ with vertices
\[
v_1 =\bmat{0 \\ 0},\; v_2 = \bmat{1 \\ 1},\; v_3 = \bmat{0 \\ 10^{-8}},\; v_4 =\bmat{-1 \\ 1}.
\]
Perturbing randomly $Q_1$ to order $10^{-9}$ produces a variation of order $10^{-2}$ in the barycentric coordinates of the point $P=\begin{bmatrix} 0 \\ \frac12 10^{-8} \end{bmatrix}$. \\
Further, let us consider the rectangle $Q_2$ with vertices
\[
v_1 =\bmat{0 \\ 0},\; v_2 = \bmat{1 \\ 0},\; v_3 = \bmat{1 \\ 10^{-8}},\; v_4 =\bmat{0 \\ 10^{-8}},
\]
where $\frac{\overline{v_1v_4}}{\overline{v_1v_2}}=10^{-8}$, so that $\overline{v_1v_4}\ll \overline{v_1v_2}$.
With $P=\begin{bmatrix} \frac12 \\ \frac12 10^{-8} \end{bmatrix}$, one obtains $\varphi=\frac14\1$; however, a random perturbation of the vertices of order $10^{-8}$ creates a variation in the barycentric coordinates of order $1$, regardless of the inexact arithmetic.
\end{rem}

\subsection{Hexahedron}
\begin{exm}\label{ex:convHex}
Let us consider the convex hexahedron $H_c$ with vertices
\[
\resizebox{\textwidth}{!}{$
v_1=\bmat{1 \\ 2 \\ 1}, \ v_2=\bmat{1 \\ 2 \\ -1}, \ v_3=\bmat{1 \\ 0 \\ -1}, \ v_4=\bmat{1 \\ 0 \\ 1},
v_5=\bmat{-1 \\ 1 \\ 1}, \ v_6=\bmat{-1 \\ 1 \\ -1}, \ v_7=\bmat{-1 \\ -1 \\ -1}, \ v_8=\bmat{-1 \\ -1 \\ 1}
$
}
\]
\end{exm}
Moment coordinates are nonnegative on the whole hexahedron, piecewise affine on the boundary edges and satisfy the Kronecker-delta property at the vertices; in addition, they satisfy the facet-reduction property on each face, as shown in Figure~\ref{fig:convHex}. Furthermore, they provide results that are distinct from 3D mean value coordinates.
\begin{figure}
    \centering
    \begin{subfigure}[b]{0.25\textwidth}
    \includegraphics[width=\textwidth]{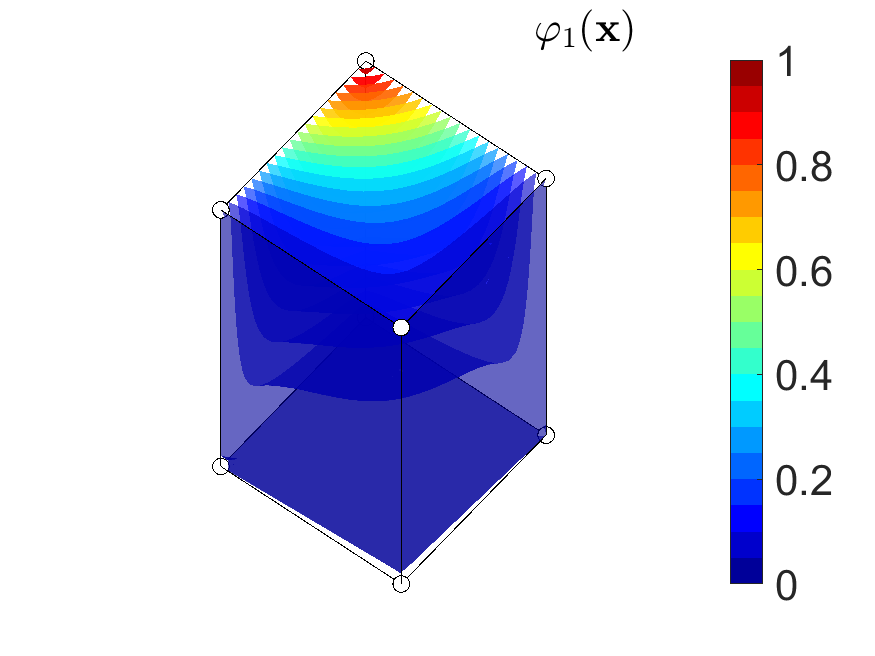}
    \end{subfigure}%
    \hfill
    \begin{subfigure}[b]{0.25\textwidth}
    \includegraphics[width=\textwidth]{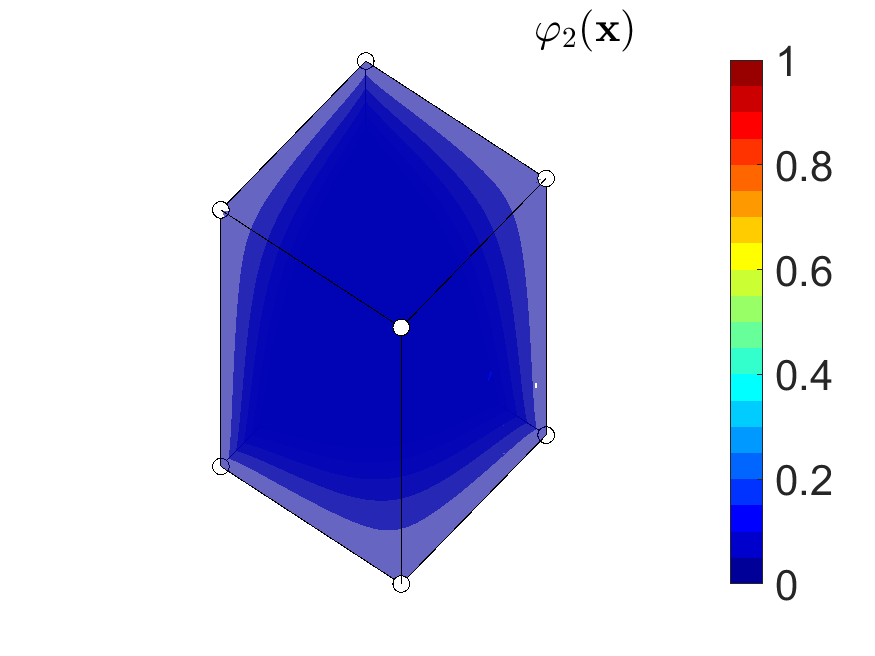}
    \end{subfigure}%
    \hfill
    \begin{subfigure}[b]{0.25\textwidth}
    \includegraphics[width=\textwidth]{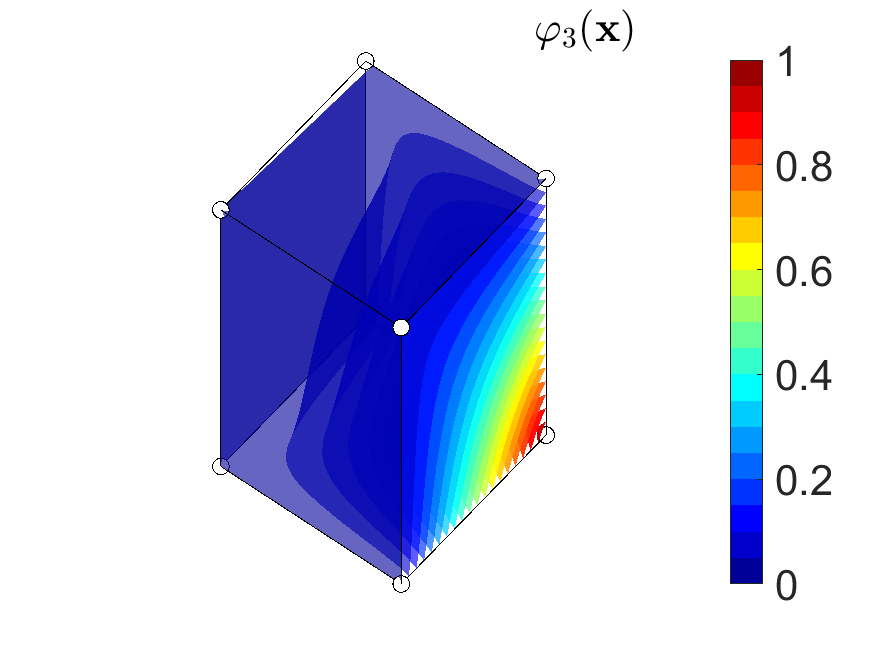}
    \end{subfigure}%
    \hfill
    \begin{subfigure}[b]{0.25\textwidth}
    \includegraphics[width=\textwidth]{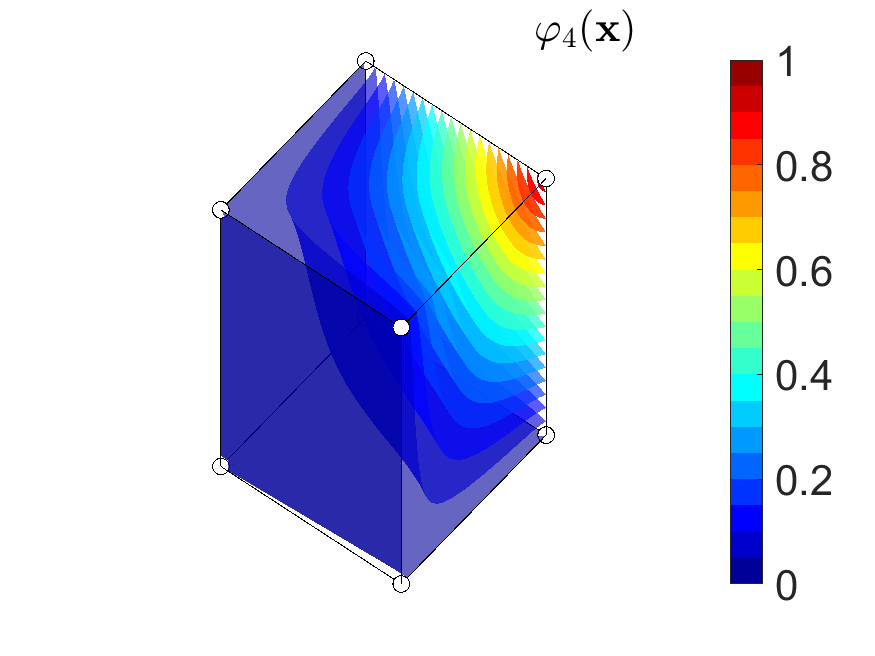}
    \end{subfigure}
    \begin{subfigure}[b]{0.25\textwidth}
    \includegraphics[width=\textwidth]{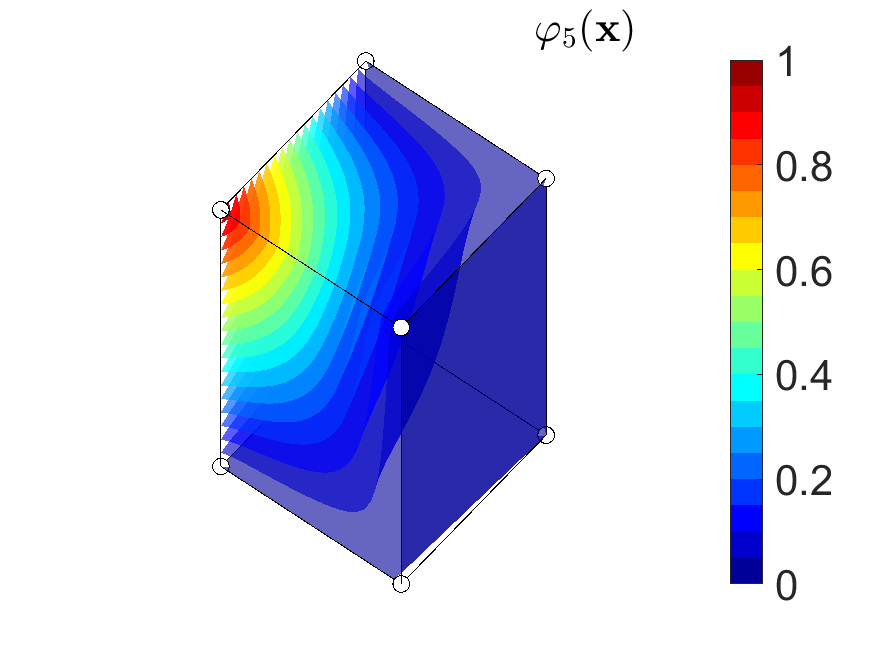}
    \end{subfigure}%
    \hfill
    \begin{subfigure}[b]{0.25\textwidth}
    \includegraphics[width=\textwidth]{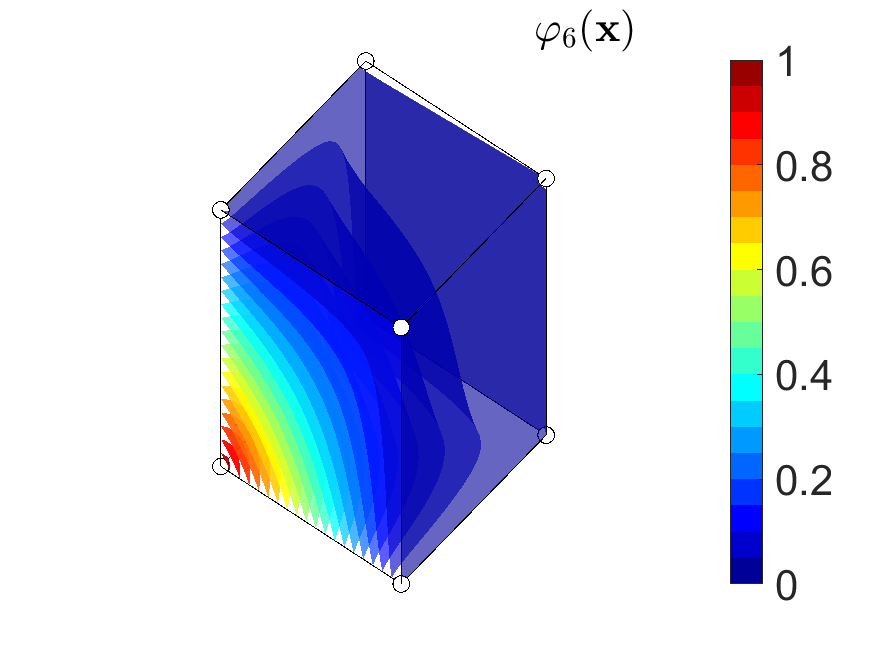}
    \end{subfigure}%
    \hfill
    \begin{subfigure}[b]{0.25\textwidth}
    \includegraphics[width=\textwidth]{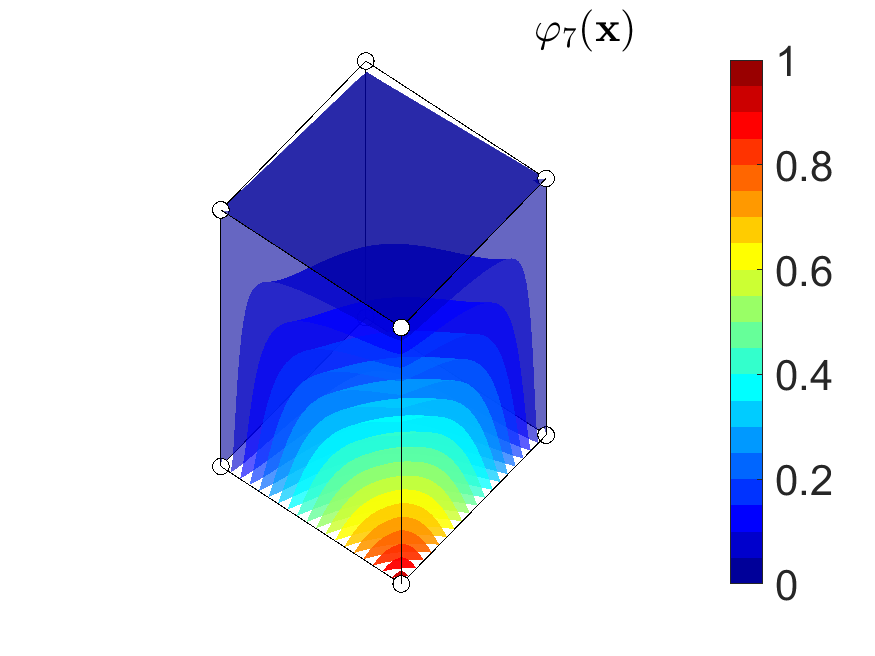}
    \end{subfigure}%
    \hfill
    \begin{subfigure}[b]{0.25\textwidth}
    \includegraphics[width=\textwidth]{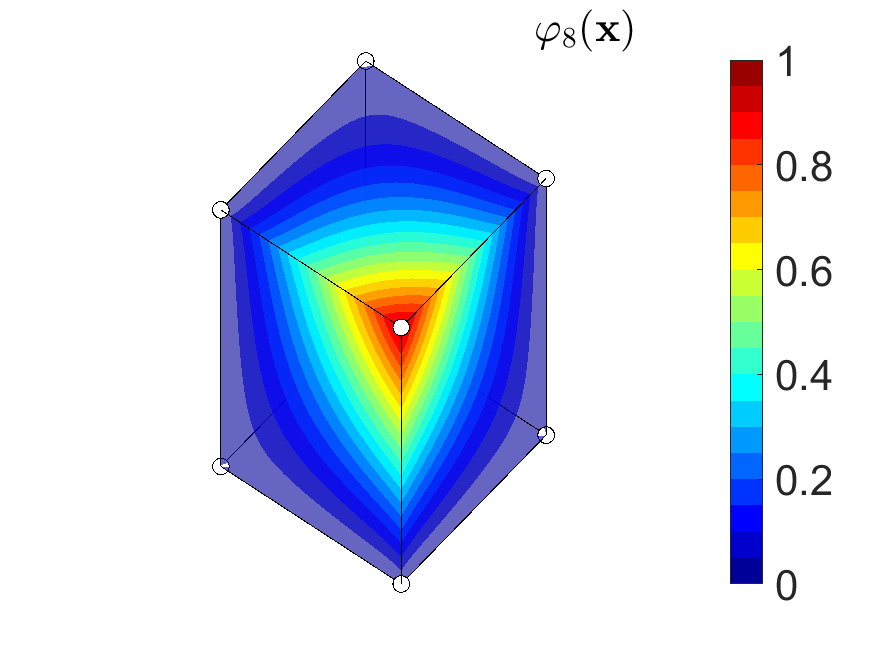}
\end{subfigure}
\caption{Moment coordinates on $H_c$ for Example~\ref{ex:convHex}.}
\label{fig:convHex}
\end{figure}

\section{Conclusions}\label{sec:conclusions}
In this paper, we introduced moment coordinates, $\gbc(p)$, a new generalized barycentric coordinates over common finite element geometries. Generalized barycentric coordinates must satisfy the linear reproducing conditions and it is desirable that they be nonnegative. We supplemented the reproducing conditions by
moment regularizing equality constraints so that the linear system of equations yielded $\gbc(p)$ that were unique and componentwise nonnegative. This idea of adding moment equality conditions goes back to work on sliding mode control in piecewise smooth dynamical systems~\cite{dd2:2017,dd4:2017}, in which one
searches for Filippov solutions in presence of dynamics along discontinuity manifolds of codimension 2 or higher. We showed that by judicious choice of the regularizing constraints, we could recover mean value coordinates~\cite{Floater:2003:MVC} on nonconvex quadrilaterals as well as Wachspress coordinates~\cite{Wachspress:2016:RBG} on convex quadrilaterals. Moment coordinates on general convex hexahedron were constructed by using signed partial distances~\cite{dd4:2017} in tandem with a local reference system, which ensured that
$V - p$ ($V$ is the matrix of vertex coordinates) had the
same sign pattern for all points in the hexahedron. Theoretical proofs were presented which were supported by analytical expressions for moment coordinates and their plots over quadrilaterals and hexahedra.

Extension of the present work to polygonal finite elements in two dimensions and to other finite element geometries (for example, prisms and pyramids) in three dimensions are of interest.
For a convex polytope with $n$ vertices, maximum-entropy
regularization~\cite{Sukumar:2004:COP} is a suitable approach to
obtain $\gbc(p)$, but it is not aligned with our objective of
obtaining a closed-form expression for $\gbc(p)$.
In keeping with the
approach presented in this paper, it
is desirable to devise $n$ linear constraints that yield
a feasible nonnegative solution for $\gbc(p)$. To this end, as a first step in future work we plan to study the extension of moment coordinates to simple polygons.

\section*{Acknowledgments}
FVD has been supported by \textit{REFIN} Project, grant number 812E4967, and by INdAM-GNCS 2023 Project, grant number CUP$\_$E53C22001930001.

\section*{Conflict of interest}
The authors declare no conflict of interest.


\begin{thebibliography}{999}

\bibitem{Floater:2015:GBC}
    M.~S. Floater, Generalized barycentric corodinates and applications,
     \emph{Acta Numerica},
     {\bf 24} (2015), 161{--\hspace*{-2mm}--}214

\bibitem{Anisimov:2017:BCA}
  D. Anisimov, Barycentric coordinates and their properties, In: K. Hormann
  and N. Sukumar, \emph{Generalized Barycentric Coordinate in Computer Graphics
  and Computational Mechanics},
  Boca Raton: Taylor \& Francis, CRC Press, 2017,
  3{--\hspace*{-2mm}--}22

\bibitem{Hormann:2017:GBC}
  K. Hormann and N. Sukumar, \emph{Generalized Barycentric Coordinates in Computer
  Graphics and Computational Mechanics},
  Boca Raton: Taylor \& Francis, CRC Press, 2017

\bibitem{Wachspress:2016:RBG}
  E. L. Wachspress, \emph{Rational Bases and Generalized Barycentric},
  Cham: Springer, 2016

\bibitem{Warren:1996:BCC}
  J. Warren, \emph{Barycentric coordinates for convex polytopes}. \emph{Adv.\  Comput.\ Math.},
  {\bf 6}:1 (1996), 97{--\hspace*{-2mm}--}108

\bibitem{Floater:2003:MVC}
    M.~S. Floater, Mean value coordinates,
     \emph{Comput.\ Aided Geom.\ Des.},
     {\bf 20}:1 (2003), 19{--\hspace*{-2mm}--}27

\bibitem{Hormann:2006:MVC}
 K. Hormann and M.~S. Floater, Mean value coordinates for
 arbitrary planar polygons,
 \emph{ACM Trans.\ Graph.},  {\bf 25}:4 (2006),
 1424{--\hspace*{-2mm}--}1441

\bibitem{Arroyo:2006:LME}
 M. Arroyo and M. Ortiz,
 Local maximum-entropy approximation schemes: A seamless
 bridge between finite elements and meshfree methods,
 \emph{Int.\ J. Numer.\ Method Eng.},
 {\bf 65}:13 (2006), 2167{--\hspace*{-2mm}--}2202

\bibitem{Joshi:2007:HCF}
    P. Joshi, M. Meyer, T. DeRose, B. Green and T. Sanocki,
    Harmonic coordinates for character articulation,
     \emph{ACM Trans.\ Graph.},
     {\bf 26}:3 (2007), Article 71, 9 pages

\bibitem{Sukumar:2007:OAC}
 N. Sukumar and R.~W. Wright,
 Overview and construction of meshfree basis functions:
 from moving least squares to entropy approximants,
 \emph{Int.\ J. Numer.\ Method Eng.},
 {\bf 70}:2 (2007), 181{--\hspace*{-2mm}--}205

\bibitem{Hormann:2008:MEC}
    K. Hormann and N. Sukumar,
    Maximum entropy coordinates for arbitrary polytopes,
     \emph{Comput.\ Graph.\ Forum},
     {\bf 27}:5 (2008), 1513{--\hspace*{-2mm}--}1520

\bibitem{Zhang:2014:LBC}
     J. Zhang, B. Deng, Z. Liu, G. Patan{\`e}, S. Bouaziz, K. Hormann and
     L. Liu,
     \emph{ACM Trans.\ Graph.},
     {\bf 33}:6 (2014), Article 188, 12 pages

\bibitem{Tao:2019:FNS}
     J. Tao, B. Deng and J. Zhang,
     A fast numerical solver for local barycentric coordinates.
     \emph{Comput.\ Aided Geom.\ Des.}, {\bf 70} (2019),
     46{--\hspace*{-2mm}--}58

\bibitem{Anisimov:2017:BBC}
     D. Anisimov, D. Panozzo and K. Hormann,
     Blended barycentric coordinates.
     \emph{Comput.\ Aided Geom.\ Des.}, {\bf 52--53} (2017),
     205{--\hspace*{-2mm}--}216

\bibitem{Deng:2020:IC}
    C. Deng, Q. Chang and K. Hormann,
     \emph{Comput.\ Aided Geom.\ Des.},
     {\bf 79} (2020), Article 101861

\bibitem{dd2:2017}
  L. Dieci and F. Difonzo, The moments sliding vector field on the intersection of two manifolds,
  \emph{J. Dyn.\ Differ.\ Equations}.
  {\bf 29}:1 (2017), 169{--\hspace*{-2mm}--}201

 \bibitem{dd4:2017}
  L. Dieci and F. Difonzo, On the inverse of some sign matrices and on the moments sliding vector field on the intersection of several manifolds: nodally attractive case,
  \emph{J. Dyn.\ Differ.\ Equations}. {\bf 29}:4 (2017), 1355{--\hspace*{-2mm}--}1381.

\bibitem{KleeLadnerManber:1984:SR}
 V. Klee, R. Ladner, R. Manber, Signsolvability revisited, \emph{Linear Algebra Appl.}, {\bf 59} (1984),
 131{--\hspace*{-2mm}--}157

 \bibitem{BrualdiShader:1995:MSSLS}
 R.~A. Brualdi, B.~L. Shader, B. Bollobas, \emph{Matrices of Sign-Solvable Linear Systems}, Cambridge Tracts in Mathematics, Cambridge University Press, 1995
 \bibitem{FlanaganBelytschko:1981:IJNME}
 D.~P. Flanagan and T. Belytschko, A uniform strain hexahedron and quadrilateral with orthogonal hourglass control,
 \emph{Int.\ J. Numer.\ Method Eng.}, {\bf 17}:5 (1981), 679{--\hspace*{-2mm}--}706

\bibitem{FloaterGilletteSukumar:2014:SINUM}
M. Floater, A. Gillette, N. Sukumar, Gradient bounds for Wachspress coordinates on polytopes, \emph{SIAM J. Numer.\ Anal.}, {\bf 52}:1 (2014), 515{--\hspace*{-2mm}--}532

\bibitem{Sukumar:2004:COP}
 N. Sukumar, Construction of polygonal interpolants: a maximum entropy approach,
 \emph{Int.\ J. Numer.\ Method Eng.},
 {\bf 61}:12 (2004), 2004{--\hspace*{-2mm}--}2181


\end{thebibliography}
\end{document}